 \newtheorem{theorem}{Theorem}[section]
 \newtheorem{corollary}[theorem]{Corollary}
 \newtheorem{lemma}[theorem]{Lemma}
 \newtheorem{proposition}[theorem]{Proposition}
\newtheorem*{theorem*}{Theorem}
 \theoremstyle{definition}
 \newtheorem{definition}[theorem]{Definition}
\theoremstyle{definition}
\newtheorem*{definition*}{Definition}
 \theoremstyle{definition}
 \newtheorem{example}[theorem]{Example}
 \newtheorem{remark}[theorem]{Remark}
 \numberwithin{equation}{section}
\newcounter{commentcounter}
\begin{document}

\title[Magnetic Equivariant Graded Brauer Group]{Magnetic Equivariant Graded Brauer Group}



\author{Higinio Serrano}
\address{
Institute of Mathematics and Informatics, Bulgarian 
Academy of Sciences; Akad. G. Bonchev St, Bl. 8, 1113 Sofia, Bulgaria.}
\email{hserrano@math.cinvestav.mx}

\author{Bernardo Uribe}
\address{Departamento de Matem\'{a}ticas y Estad\'istica, Universidad del Norte, Km.5 V\'ia Antigua a Puerto Colombia, 
Barranquilla, 081007, Colombia.}
\email{bjongbloed@uninorte.edu.co, buribe@gmail.com}


\subjclass[2020]{
(primary) 16K50, 	19L50, (secondary) 	16S35}
\date{\today}
\keywords{Magnetic group, magnetic representation, 
equivariant Brauer group, graded Brauer group.}
\begin{abstract}
Given a magnetic finite group, we consider the similarity classes of magnetic equivariant central simple graded algebras over the complex numbers. We call this set the magnetic equivariant graded Brauer group and its structure as an abelian group is explicitly determined. Following Karoubi, we
argue that the elements of this graded Brauer group parametrize the twistings of the magnetic equivariant K-theory of a point.
\end{abstract}

\maketitle

\tableofcontents

\section*{Introduction} 

Magnetic representations of groups are complex representations
where the elements of the group act either complex linearly or complex antilinearly. By fixing the elements of the group that act complex antilinearly, we obtain what is known as a magnetic group. This is a group $G$ together 
with a surjective homomorphism $\phi:G \to \mathbb{Z}/2$. This homomorphism aims to encode which elements of the group act complex linearly and which ones would act complex antilinearly; for the first set we take the elements of the subgroup $\phi^{-1}(0)$, while for the second set we take its complement $\phi^{-1}(1)$. The study of the representations of magnetic groups was done by Wigner
in his seminal book \cite{wigner}, and its classification on the three types ($\mathbb{R}$, $\mathbb{C}$ or $\mathbb{H}$) of irreducible representations was understood through the structure of the algebra of equivariant endomorphisms of the irreducible representations.

The generalization of this classification to magnetic equivariant $\mathbb{Z}/2$-graded representations was done in the 60's by Dyson \cite{Dyson_Threefold} and a ten-fold classification
was thus obtained. Around the same time, the generalization of the Brauer group of Morita equivalence classes of central simple algebras over a fixed field to the case of central simple graded algebras was carried out by Wall \cite{Wall_Graded_Brauer_groups}, and the graded Brauer group of a field was born. The graded Brauer group of the reals was shown to be a cyclic group of order eight, and the one of the complex numbers of order two. 

It was clear at the time that the Wedderburn-Artin theorem, on its graded version \cite[Thm. 6.2.5]{Varadarajan_Supersymmetry}, was the key ingredient to relate both approaches. On the one hand, the algebra of equivariant endomorphisms of an irreducible magnetic graded representation has the structure of a graded division algebra \cite{Dyson_Threefold}, while on the other, by the Wedderburn-Artin theorem, central simple graded algebras 
are isomorphic to graded matrix algebras over graded division algebras \cite{FreedMoore_Twisted_equivariant_matter}. 

In this work we introduce the appropriate definition of graded  central simple  algebras in the magnetic equivariant setup, we generalize the notion of similarity in this context, and we introduce their similarity classes with the name of {\it magnetic equivariant graded Brauer group}.

The equivariant Brauer group of a field was introduced by Fr\"ohlich \cite{Froelich} and its graded extension was introduced by Riehm \cite{Riehm_Graded_Equivariant_CSA}. Here we study central simple graded algebras over $\mathbb{C}$ endowed with a magnetic $G$-action of algebra automorphisms; we call these objects magnetic equivariant central simple graded algebras. Two of these algebras are called similar if after tensoring with the endomorphism algebra of degree zero magnetic representations they become $G$-equivariant isomorphic algebras. The similarity classes of these algebras is a finite group by the tensor product structure and it is our desired magnetic equivariant graded Brauer group.

The main result of this work is Thm. \ref{Theorem decomposition GrBr} where we present an explicit calculation of the abelian group structure of the magnetic equivariant graded Brauer group. This is the following isomorphism:
\begin{align} \nonumber
\mathrm{GrBr}_{(G,\phi)}(\mathbb{C}) \   {\cong}  \Big( H^2(G, \mathbb{C}^*_\phi) \overset{\bullet}{\times}_\rho \mathrm{Hom}(G,\mathbb{Z}/2) \Big) \overset{\bullet}\times_\beta \mathbb{Z}/2  
\end{align}
where the symbols $\overset{\bullet}{\times}_\rho$ 
and $\overset{\bullet}{\times}_\beta$ denote twisted products by explicit 2-cocycles $\rho$ and $\beta$.

We compare our calculation with the previously known calculations of  the graded equivariant Brauer groups of $\mathbb{R}$ and $\mathbb{C}$, and we highlight some of their similarities and differences. We present explicit calculations of the magnetic equivariant graded Brauer groups noting the 
key differences whenever the homomorphism $\phi: G \to \mathbb{Z}/2$ does not split.

We finish our work with an application of the magnetic equivariant graded Brauer groups in the classification of twists in magnetic equivariant K-theory \cite{serrano2025magneticequivariantktheory}. We follow Karoubi's approach to study K-theory, and we argue that the magnetic equivariant graded Brauer group parametrizes the twists in the magnetic equivariant K-theory.
We show that the degree shift isomorphism in magnetic equivariant K-theory follows from a property of the magnetic equivariant graded Brauer group, and we further
show that the 4-periodicity in certain  magnetic equivariant K-theories follow 
from the particular structure of the associated magnetic equivariant graded Brauer groups.

We include in the first section Table \ref{Table-tenfold-way} where we present a summary of the results in the theory of graded Brauer groups over $\mathbb{R}$ and $\mathbb{C}$, in the classification of real division graded algebras, and in the appearance of the ten-fold way in the study of symmetries of periodic Hamiltonians. The explicit algebras defined in the first section are the main characters appearing in the latter sections.










\section{Graded Brauer groups and the ten-fold way}

In this section we are going to explore the relation between the graded Brauer groups of the reals and the complex numbers, the set of real graded central division algebras, and the ten-fold classification of non-interacting fermionic Hamiltonians.

The famous Wedderburn-Artin theorem, in its graded version \cite[Thm. 6.2.5]{Varadarajan_Supersymmetry},
implies that any central simple graded algebra over a fixed field is a matrix
algebra over a graded division algebra over the field. 
Therefore, the classification of central simple graded algebras can be reduced to  the classification of graded division algebras. 

Two central simple graded algebras are called similar if they are matrix algebras over the same graded division algebra, and the similarity classes of simple central graded algebras is called the graded Brauer group. It becomes an abelian group because the tensor product of simple central graded algebras is also a central simple graded algebra.

In the case of the reals and the complex numbers the graded Brauer groups are respectively $\mathbb{Z}/8$ and $\mathbb{Z}/2$. The real and complex Clifford algebras can be taken as representatives of the similarity classes and the periodicity of the Clifford algebras is the famous Bott periodicity.

The ten possibilities of central simple graded algebras have also appeared in the physics literature in different disguises. Perhaps the first appearance was Dyson classification of symmetries of Quantum systems \cite{Dyson_Threefold}, while a more recent one is the famous Altland-Zirnabauer (AZ) classification of free-fermion periodic Hamiltonians \cite{Altland}. This classification is also known as the "Ten-fold way" and has been masterfully reinterpreted by Kitaev in terms of the classification of Clifford algebras \cite{kitaev}. 

Here we will summarize the classifications mentioned above and we will present a simple description that allows us to relate the graded Brauer groups over $\mathbb{R}$ and $\mathbb{C}$, the graded division algebras over $\mathbb{R}$, and the AZ classification scheme.
The intermediary group that will allow us to relate all these objects is the graded Brauer group of simple central graded complex algebras endowed with a $\mathbb{C}$-antilinear automorphism. The main results of this section are summarized in Table \ref{Table-tenfold-way}.

\subsection{Graded Brauer groups over $\mathbb{R}$ and $\mathbb{C}$}

Graded magnetic representations and their endomorphisms incorporate information on graded division algebras over both the field of complex numbers and the field of real numbers. To study both in the same framework we are going to take the algebraic approach of working over a
field $k$ taking into account its separable closure ${k}_s$ and 
the absolute Galois group $\mathrm{Gal}({k}_s/k)$. Our main application will
be whenever $k=\mathbb{R}$, ${k}_s=\mathbb{C}$ and the Galois group is 
$\mathrm{Gal}(\mathbb{C}/\mathbb{R}) \cong \mathbb{Z}/2$ acting on $\mathbb{C}$ by conjugation.

The main references for what follows are Wall's paper on
graded Bauer groups \cite{Wall_Graded_Brauer_groups},
its exposition on Lam's book \cite[\S 4]{Lam_Introduction_to_quadratic_forms}, the generalization for field extensions done in Gille and Szamuely's book \cite[\S 2 \& \S 4]{Gille_Central_Simple_algebras} and the equivariant
extension of Riehm \cite{Riehm_Graded_Equivariant_CSA}.


Assume the field $k$ if of characteristic different from $2$. All
algebras we will consider will be finite dimensional associative $k$-algebras with unit $1$, and  "super algebra", or simply "graded algebra",  will mean $\mathbb{Z}/2$-graded algebra. 

Thus a graded algebra $A$ is a direct sum $A = A_0 \oplus A_1$ with $A_iA_j \subset A_{i+j}$ for $i, j \in \mathbb{Z}/2$. The homogeneous elements of $A$ are $hA = A_0 \cup A_1$ and the degree function
is $\partial : hA \to \{0,1\}$. 

Graded operations will be denoted with a hat $\hat{\ }$, while the operations disregarding the grading will not contain a hat.

If $B$ is a subalgebra of $A$, its {\it centralizer} $C_A(B)$ consists of the elements in $A$ which commute with $B$:
\begin{align}
C_A(B) ) := \{ a \in A \colon  ab=ba \ \mathrm{for} \ \mathrm{all} \ b \in B \}
\end{align}
while the {\it graded centralizer} $\hat{C}_A(B)$ consists of all elements in $A$ which commute in the graded sense with all elements in $B$.
Recall that two homogeneous elements $a, b \in hA$ commute in the graded sense if $ab=(-1)^{\partial a\partial b}ba$.

The {\it center} $Z(A)$ of $A$ is the centralizer $C_A(A)$ and the {\it graded center} $\hat{Z}(A)$ of $A$ is the graded centralizer $\hat{C}_A(A)$. 
If $A_1=0$ then the two concepts agree.

$A$ is denoted {\it graded simple} if the only two-sided graded ideals are $0$ and $A$, and it is {\it central} if its graded center $\hat{Z}(A)$ is $k$.

Graded algebras, simple graded algebras, central graded algebras and central simple graded algebras over the field $k$ will be denoted respectively GA$(k)$, SGA$(k)$, CGA$(k)$ and CSGA$(k)$. 

The graded tensor product $A \hat{\otimes} B$ of two GAs is 
the usual tensor product as vector spaces, and the graded multiplication defined on
homogeneous elements $a,b,a',b'$ by the equation
\begin{align}
(a \otimes b)(a' \otimes b')=(-1)^{\partial a' \partial b} (aa' \otimes bb') 
\end{align}
makes $A \hat{\otimes} B$ into a GA. Moreover, if $A$ and $B$
are CSGA, then so is $A \hat{\otimes} B$.

If $V=V_0 \oplus V_1$ is a finite dimensional graded vector space over $k$, we let the algebra of endomorphisms $\mathrm{End}(V)$ of $V$
be graded algebra with homogeneous elements of degree zero the 
endomorphisms $\rho$ with $ \rho(V_i) \subset V_i $ and the homogeneous elements of degree one the ones which satisfy $ \rho(V_i) \subset V_{i+1}$. We will denote this CSGA by $\mathrm{End}(V_0,V_1)$. If $V$ is the graded
vector space with $V_0=k^p$ and $V_1=k^q$, we will use the matrix notation:
\begin{align}
    M_{p|q}(k) : = \mathrm{End}(k^p, k^q).
\end{align}
The matrix algebra $M_p(k)$ will simply denote $M_{p|0}(k).$

Two CSGAs  $A$ and $B$ are similar, denoted as $A \sim B$, if there are
graded vector spaces $V$ and $W$ such that there is an isomorphism:
\begin{align}
    A \hat{\otimes}\mathrm{End}(V_0,V_1) \cong B \hat{\otimes}\mathrm{End}(W_0,W_1)
\end{align}
as GAs. Similarity is an equivalence relation, and we denote by
$[A]$ the similarity class of the CSGA $A$. The equivalence classes of similar CSGAs is called the {\it Graded Brauer group}
or the {\it Brauer-Wall group}:
\begin{align}
    \mathrm{GrBr}(k) := \mathrm{CSGA}(k)/\sim.
\end{align}

The graded tensor product makes $\mathrm{GrBr(k)}$ into a monoid, and the opposite graded algebra:
\begin{align}
    A^{\mathrm{op}}= \{a^{\mathrm{op}} \colon a \in A \}, \ \ a^{\mathrm{op}}b^{\mathrm{op}}:=(-1)^{\partial a \partial b}(ab)^{\mathrm{op}}
\end{align}
provides the inverse. This follows from the isomorphism of graded algebras $A \hat{\otimes}A^{\mathrm{op}} \cong \mathrm{End}(A)$. Therefore
the Graded Brauer group of $k$ becomes an abelian group,
since we know that $A \hat{\otimes} B \cong B \hat{\otimes} A$ with $a \otimes b \mapsto (-1)^{\partial a \partial b}b \otimes a$.

The cases of interest for us are when the field is $\mathbb{R}$ or $\mathbb{C}$. In these cases we have:
\begin{align}
     \mathrm{GrBr}(\mathbb{R}) \cong \mathbb{Z}/8  \ \ \ \&  \ \ \ 
      \mathrm{GrBr}(\mathbb{C}) \cong \mathbb{Z}/2,
\end{align}
and the Clifford algebras provide representatives for the similarity classes of CSGA. For $p,q$ non-negative integers, the Clifford algebra $C^{p,q}_\mathbb{R}$ is the Clifford algebra of the vector space $\mathbb{R}^{p+q}$
with quadratic form $Q(x)=x_1^2+...+x_p^2-x_{p+1}^2-...-x_{p+q}^2$. Alternatively it could be written as:
\begin{align}
    C^{p,q}_\mathbb{R} = \mathbb{R}\langle e_1,..,e_{p+q} | e_ie_j=-e_je_i, e_j^2=Q(e_j) \rangle
\end{align}
where the generators $e_j$ have degree one. The Clifford algebras are CSGA
over $\mathbb{R}$ and satisfy several properties:
\begin{align}
C^{p,q}_\mathbb{R} \hat{\otimes} C^{r,s}_\mathbb{R} &\cong C^{p+r,q+s}_\mathbb{R} \ & \ C^{1,1}_\mathbb{R} &\cong \mathrm{End}(\mathbb{R}, \mathbb{R})\\ C^{p,q+8}_\mathbb{R} &\cong C^{p+8,q}_\mathbb{R} \ & \ C^{0,8}_\mathbb{R} &\cong \mathrm{End}(\mathbb{R}^{16}, \mathbb{R}^{16}).
\end{align}
Wall \cite{Wall_Graded_Brauer_groups} showed that the similarity class of the Clifford algebra $C^{0,1}_\mathbb{R}$ can be taken as a generator of the graded Brauer group of $\mathbb{R}$. That is:
\begin{align}
    \mathrm{GrBr}(\mathbb{R}) = \langle [C^{0,1}_\mathbb{R}] \rangle \cong \mathbb{Z}/8.
\end{align}

For the complex numbers there are only two similarity classes and we have the isomorphisms:
\begin{align}
    \mathrm{GrBr}(\mathbb{C}) = \langle [C^{1}_\mathbb{C}] \rangle \cong \mathbb{Z}/2.
\end{align}
Here $C^{n}_\mathbb{C}$ denotes the complex Clifford algebra with $n$-generators and quadratic form  $Q(x)=|x|^2$. Note that for the complex numbers all quadratic forms are equivalent and therefore $C^{p+q}_{\mathbb{C}} \cong C^{p,q}_{\mathbb{C}}$ where $C^{p,q}_{\mathbb{C}}:=C^{p,q}_{\mathbb{R}} \underset{\mathbb{R}}{\otimes} \mathbb{C}$.

\subsection{Algebras over $\mathbb{C}$ with antilinear automorphism} \label{subsection algebras antilinear}

The field of real numbers can be also understood as the conjugation invariant field of the complex numbers. With this approach we could
see the category of finite dimensional vector spaces over $\mathbb{R}$
as equivalent to the category of finite dimensional vector spaces
over $\mathbb{C}$ endowed with a $\mathbb{C}$-antilinear involution.

If $\mathrm{Vect}(\mathbb{R})$ denotes the category of finite dimensional $\mathbb{R}$-vector spaces, and $\mathrm{Vect}_{(\mathbb{Z}/2,\mathrm{id})}(\mathbb{C})$
denotes the category of finite dimensional $\mathbb{C}$-vector spaces
endowed with an action of $\mathbb{Z}/2$ whose linearity is 
defined by the homomorphism $\mathrm{id}:\mathbb{Z}/2 \to \mathrm{Gal}(\mathbb{C}/\mathbb{R})\cong \mathbb{Z}/2$, then Speiser's lemma \cite[Lem. 2.3.8]{Gille_Central_Simple_algebras} implies that 
the functors:
\begin{align}
    \mathrm{Vect}(\mathbb{R}) & \to \mathrm{Vect}_{(\mathbb{Z}/2,\mathrm{id})}(\mathbb{C}) &  \mathrm{Vect}_{(\mathbb{Z}/2,\mathrm{id})}(\mathbb{C}) & \to \mathrm{Vect}(\mathbb{R})\\
    V & \mapsto V \underset{\mathbb{R}}{\otimes} \mathbb{C} &  
    W & \mapsto W^{\mathbb{Z}/2}
\end{align}
provides the equivalence of the categories.

The complex tensor product makes $\mathrm{Vect}_{(\mathbb{Z}/2,\mathrm{id})}(\mathbb{C})$
into a tensor category, and an algebra object in this category
is nothing else but an algebra $A$ over the complex numbers endowed with a $\mathbb{C}$-antilinear automorphism $\tau : A \overset{\cong}{\to} A$. Two complex algebras 
endowed with $\mathbb{C}$-antilinear automorphisms are isomorphic if they
are isomorphic as complex algebras and the isomorphism is equivariant
with respect to the automorphisms.

We call graded algebras over $\mathbb{C}$ with antilinear automorphism
 GA$_{(\mathbb{Z}/2,\mathrm{id})}(\mathbb{C})$, and accordingly, we
may take the central and simple complex algebras with $\mathbb{C}$-antilinear automorphism CSGA$_{(\mathbb{Z}/2,\mathrm{id})}(\mathbb{C})$.

Two algebras $(A,\tau_A)$ and $(B, \tau_B)$ in CSGA$_{(\mathbb{Z}/2,\mathrm{id})}(\mathbb{C})$ are similar
if there is an isomorphism of complex algebras:
\begin{align}
    A \hat{\otimes}\mathrm{End}(V_0,V_1) \cong B \hat{\otimes}\mathrm{End}(W_0,W_1)
\end{align}
compatible with the induced $\mathbb{C}$-antilinear automorphisms on each side. Here the automorphism on the endomorphism algebra
$\mathrm{End}(V_0,V_1)$ is given by the adjoint map of the complex conjugation:
\begin{align}
    \mathrm{Ad}_{\mathbb{K}} : \mathrm{End}(V_0,V_1) & \to \mathrm{End}(V_0,V_1) \\
    \rho & \mapsto \mathbb{K} \rho \mathbb{K},
\end{align}
 which on vectors act in the following form  $v \mapsto \mathbb{K} \rho(\mathbb{K} v)$. Here $\mathbb{K}$ denotes complex conjugation.

The equivalence classes of similar algebras in CSGA$_{(\mathbb{Z}/2,\mathrm{id})}(\mathbb{C})$ is the Graded Brauer group and will be denoted
as:
\begin{align}
    \mathrm{GrBr}_{(\mathbb{Z}/2,\mathrm{id})}(\mathbb{C}).
\end{align}

The following proposition follows from Speiser's lemma \cite[Lem. 2.3.8]{Gille_Central_Simple_algebras} and is a key result in our generalization of the Graded Brauer group for magnetic groups.

\begin{proposition}
    There is a canonical isomorphism of groups $\mathrm{GrBr}(\mathbb{R}) \cong \mathrm{GrBr}_{(\mathbb{Z}/2,\mathrm{id})}(\mathbb{C})$ realized by the maps:
    \begin{align}
    \mathrm{GrBr}(\mathbb{R}) & \to \mathrm{GrBr}_{(\mathbb{Z}/2,\mathrm{id})}(\mathbb{C}) &  \mathrm{GrBr}_{(\mathbb{Z}/2,\mathrm{id})}(\mathbb{C}) & \to \mathrm{GrBr}(\mathbb{R})\\
    A & \mapsto (A \underset{\mathbb{R}}{\otimes} \mathbb{C}, \mathrm{Ad}_{\mathrm{id} \otimes \mathbb{K}}) &  
    (A, \tau) & \mapsto A^{\tau}.
\end{align}
Here $A^{\tau}$ denotes the fixed points of the automorphism and $\mathbb{K}$ denotes complex conjugation.
\end{proposition}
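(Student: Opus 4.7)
The plan is to apply Galois descent, in the form of Speiser's lemma, not just to vector spaces but to graded algebras with the additional structure of centrality and simplicity. The key observation is that the equivalence $\mathrm{Vect}(\mathbb{R}) \simeq \mathrm{Vect}_{(\mathbb{Z}/2,\mathrm{id})}(\mathbb{C})$ is monoidal and trivially extends to $\mathbb{Z}/2$-graded objects (the grading is preserved by $- \otimes_\mathbb{R} \mathbb{C}$ and by fixed points). Therefore it restricts to an equivalence of monoidal categories of graded algebras, from which the Brauer-level isomorphism will follow.

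First I would verify that the two functors are well-defined on CSGAs. For $A \in \mathrm{CSGA}(\mathbb{R})$, base change to $\mathbb{C}$ preserves centrality and graded simplicity because $\mathbb{C}/\mathbb{R}$ is a separable field extension (cf.\ \cite[\S 2.2]{Gille_Central_Simple_algebras}), and the canonical involution $\mathrm{id}_A \otimes \mathbb{K}$ makes $(A \otimes_\mathbb{R} \mathbb{C},\, \mathrm{Ad}_{\mathrm{id} \otimes \mathbb{K}})$ an object of $\mathrm{CSGA}_{(\mathbb{Z}/2,\mathrm{id})}(\mathbb{C})$. Conversely, given $(B,\tau)$, Speiser's lemma provides a natural identification $B^\tau \otimes_\mathbb{R} \mathbb{C} \cong B$ of $\mathbb{C}$-vector spaces; since $\tau$ is an algebra automorphism, $B^\tau$ inherits a graded $\mathbb{R}$-algebra structure, and centrality and graded simplicity descend from $B$ by taking fixed points of the graded center and of graded ideals.

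Next I would check compatibility with the operations defining the Brauer group. The functor $- \otimes_\mathbb{R} \mathbb{C}$ preserves the graded tensor product (the sign rule uses only the grading, not the involution), and it sends $\mathrm{End}(V_0,V_1)$ to $\mathrm{End}(V_0 \otimes \mathbb{C},\, V_1 \otimes \mathbb{C})$ equipped with $\mathrm{Ad}_\mathbb{K}$. The inverse functor, given by fixed points, is also monoidal by a standard descent argument. These compatibilities ensure that the similarity relations are matched in the two directions, so both functors descend to well-defined homomorphisms between $\mathrm{GrBr}(\mathbb{R})$ and $\mathrm{GrBr}_{(\mathbb{Z}/2,\mathrm{id})}(\mathbb{C})$.

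The main subtlety lies in descending isomorphisms. If $A,B \in \mathrm{CSGA}(\mathbb{R})$ are such that $A \otimes_\mathbb{R} \mathbb{C}$ and $B \otimes_\mathbb{R} \mathbb{C}$ become similar as objects with antilinear involution, then any intertwining isomorphism is already defined over $\mathbb{C}$ and equivariant; the torsor of equivariant $\mathbb{C}$-linear isomorphisms is controlled by $H^1(\mathrm{Gal}(\mathbb{C}/\mathbb{R}), \operatorname{GL})$, which vanishes by Hilbert 90 (the exact form needed here is again Speiser's lemma). Symmetrically, injectivity of the fixed-point map on similarity classes reduces to the same cohomological vanishing. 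Once this descent of isomorphisms is in place, the two functors are mutually inverse on similarity classes and hence give the desired isomorphism of abelian groups.
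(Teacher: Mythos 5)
Your proposal is correct and rests on the same key tool as the paper, namely Speiser's lemma (Galois descent for $\mathbb{C}/\mathbb{R}$). The paper's proof is considerably terser: it simply exhibits the Speiser natural isomorphism $\lambda: A^\tau \otimes_\mathbb{R} \mathbb{C} \to A$ and verifies by a one-line computation that $\lambda$ intertwines $\mathrm{Ad}_{\mathrm{id}\otimes\mathbb{K}}$ with $\tau$, taking as read the monoidality and preservation of centrality/simplicity that you spell out. Your version makes explicit the structural scaffolding (well-definedness on CSGAs, compatibility with the graded tensor product and with the similarity relation, descent of isomorphisms via the Hilbert-90 form of Speiser) that the paper suppresses; this buys a cleaner, more self-contained argument at the cost of length, but both proofs are substantively the same descent argument.
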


\begin{proof}
The natural map $\lambda : A^\tau \underset{\mathbb{R}}{\otimes} \mathbb{C} \to A$, $\lambda(a \otimes \mu) = \mu a$, is a $\mathbb{C}$-linear map
that provides the isomorphism of complex vector spaces; this is
Speiser's lemma \cite[Lem. 2.3.8]{Gille_Central_Simple_algebras}. 
This isomorphism is equivariant with the $\mathbb{C}$-antilinear isomorphisms of the algebras:
\begin{align}
\lambda (\mathrm{Ad}_{\mathrm{id}\otimes \mathbb{K}}(a\otimes\mu))=
    \lambda(\rm{id} \otimes \mathbb{K}(a \otimes \mu)\rm{id} \otimes \mathbb{K})= \lambda(a \otimes \bar{\mu}) = \bar{\mu} a = \tau(\mu a) = \tau(\lambda(a \otimes \mu)).
\end{align}
\end{proof}

The similarity classes of real Clifford algebras $C^{p,q}_{\mathbb{R}}$ 
generate the group $\mathrm{GrBr}(\mathbb{R}) \cong \mathbb{Z}/8$ and they can be rewritten in terms
of complex central algebras with antilinear automorphisms. Let us show explicit generators of these 8 similarity classes:

\begin{itemize}
    \item[$\boxed{p+q=0}$] The Clifford algebra $C^{0,0}_{\mathbb{R}}$ is $\mathbb{R}$ and it is mapped to the algebra $\mathbb{C}$ with  complex conjugation as antilinear automorphism $\tau^{0,0}=\mathrm{Ad}_\mathbb{K}$. We therefore have:
    \begin{align}
        C^{0,0}_{\mathbb{R}} \mapsto (\mathbb{C}, \tau^{0,0}=\mathrm{Ad}_\mathbb{K})
    \end{align}
    \item[$\boxed{p+q=1}$] Both Clifford algebras $C^{1,0}_{\mathbb{R}}$ and $C^{0,1}_{\mathbb{R}}$ map to the complex algebra: 
    \begin{align}    
    C^{1}_{\mathbb{C}} \cong \mathbb{C}[e] /\langle e^2=1 \rangle
    \end{align}
    where the automorphism $\tau^{1.0}$ associated to $C^{1,0}_{\mathbb{R}}$
    is simply complex conjugation 
    $\tau^{1,0}(x+ye)=\bar{x}+\bar{y}e$,
     while 
    the automorphism $\tau^{0,1}$ associated to $C^{0,1}_{\mathbb{R}}$
    is 
    $\tau^{1,0}(x+ye)=\bar{x}-\bar{y}e$.
    In the former case the element $e$ is 
    $\tau^{1,0}$-invariant with $e^2=1$, and in the latter $ie$ is   $\tau^{0,1}$-invariant with $(ie)^2=-1.$ We therefore have:
    \begin{align} \label{complexification C10}
       C^{1,0}_{\mathbb{R}} \mapsto & \left( \mathbb{C}[e] /\langle e^2=1 \rangle, \tau^{1,0}=\mathrm{Ad}_\mathbb{K}\right), \\
       C^{0,1}_{\mathbb{R}} \mapsto & \left( \mathbb{C}[e] /\langle e^2=-1 \rangle, \tau^{0,1}=\mathrm{Ad}_\mathbb{K}\right) 
    \end{align}
     
    \item[$\boxed{p+q=2}$] The three Clifford algebras $C^{2,0}_{\mathbb{R}}$, $C^{1,1}_{\mathbb{R}}$ and $C^{0,2}_{\mathbb{R}}$
    map to the complex algebra:
    \begin{align}    
    C^{2}_{\mathbb{C}} \cong M_{1|1}(\mathbb{C}).
    \end{align}
    The automorphisms $\tau^{p,q}$ in terms of matrices are: 
    \begin{align}
        \tau^{2,0} \begin{pmatrix} \alpha & \beta \\ \gamma & \delta 
        \end{pmatrix} = \begin{pmatrix} \bar{\delta} & \bar{\gamma} \\ \bar{\beta} & \bar{\alpha}
        \end{pmatrix} \ \ \ \ \ 
        \tau^{1,1} \begin{pmatrix} \alpha & \beta \\ \gamma & \delta 
        \end{pmatrix} = \begin{pmatrix} \bar{\alpha} & \bar{\beta} \\ \bar{\gamma} & \bar{\delta}
        \end{pmatrix} \ \ \ \ \ 
        \tau^{0,2} \begin{pmatrix} \alpha & \beta \\ \gamma & \delta 
        \end{pmatrix} = \begin{pmatrix} \bar{\delta} & -\bar{\gamma} \\ -\bar{\beta} & \bar{\alpha}
        \end{pmatrix} 
    \end{align}
    and in terms of operators are: 
    \begin{align}
        \tau^{2,0} =  \mathrm{Ad}_{\left(\begin{smallmatrix}
            0 & 1 \\ 1 & 0
        \end{smallmatrix}\right) \mathbb{K}},  \ \ \ 
        \tau^{1,1} = \mathrm{Ad}_\mathbb{K}, \ \ \ 
        \tau^{0,2} = \mathrm{Ad}_{\left(\begin{smallmatrix}
            0 & -1 \\ 1 & 0
        \end{smallmatrix}\right) \mathbb{K}}.
    \end{align}
Here the adjoint operator $\mathrm{Ad}_{M\mathbb{K}}$ means $\mathrm{Ad}_{M\mathbb{K}}(C)=M\mathbb{K}C \mathbb{K}M^{-1}$
for $C \in \mathrm{End}(\mathbb{C}, \mathbb{C})$. We therefore have the assignment:
\begin{align} \label{C{2,0} to complex}
    C^{2,0}_{\mathbb{R}} \mapsto & \left( M_{1|1}(\mathbb{C}), \tau^{2,0} =  \mathrm{Ad}_{\left(\begin{smallmatrix}
            0 & 1 \\ 1 & 0
        \end{smallmatrix}\right)    \mathbb{K}}\right)\\
    C^{1,1}_{\mathbb{R}} \mapsto & \left( M_{1|1}(\mathbb{C}), \tau^{1,1} = \mathrm{Ad}_\mathbb{K} \right) \\
        C^{0,2}_{\mathbb{R}} \mapsto & \left( M_{1|1}(\mathbb{C}), \tau^{0,2} =  \mathrm{Ad}_{\left(\begin{smallmatrix}
            0 & -1 \\ 1 & 0
    \end{smallmatrix}\right) \mathbb{K}} \right) 
\end{align}
\item[$\boxed{p+q=3}$] Here we are interested in the Clifford algebras 
$C^{3,0}_{\mathbb{R}}$ and $C^{0,3}_{\mathbb{R}}$. In both cases, the 
algebra generated by the elements $e_1e_2$ and $e_2e_3$ generate the degree 
0 part of the algebras and this is isomorphic to the quaternions 
$\mathbb{H}$. If we further add the element $e_1e_2e_3$ of degree one, we see 
that it commutes with with $e_1e_2$ and $e_2e_3$ and it squares to $-1$ in
$C^{3,0}_{\mathbb{R}}$ and to $1$ in $C^{3,0}_{\mathbb{R}}$. Therefore we have the graded algebra isomorphisms:
\begin{align}
C^{3,0}_{\mathbb{R}} \cong \mathbb{H}[e] / \langle e^2=-1 \rangle,  \ \ \ 
    C^{0,3}_{\mathbb{R}} \cong \mathbb{H}[e] / \langle e^2=1 \rangle.
\end{align}
After complexification, the quaternions algebra $\mathbb{H}$ becomes the matrix algebra $M_2(\mathbb{C})$ with antiunitary automorphism given 
by the operator$ \mathrm{Ad}_{\left(\begin{smallmatrix}
            0 & -1 \\ 1 & 0
        \end{smallmatrix}\right) \mathbb{K}}$,
and the Clifford algebras $C^{3,0}_{\mathbb{R}}$ and $C^{0,3}_{\mathbb{R}}$ are mapped to the complex algebra $M_2(\mathbb{C})[e]/\langle e^2 =1 \rangle$. If we write the elements in this algebra as $(M_0 + M_1e)$ with $M_i \in M_2(\mathbb{C})$, then the Clifford algebras transform as follows:
\begin{align}
    C^{3,0}_\mathbb{R} \mapsto & \left(M_2(\mathbb{C})[e]/\langle e^2 =-1 \rangle, \tau^{3,0}= \mathrm{Ad}_{\left(\begin{smallmatrix}
            0 & -1 \\ 1 & 0
        \end{smallmatrix}\right)  \mathbb{K}}  \right),\\
        C^{0,3}_\mathbb{R} \mapsto & \left(M_2(\mathbb{C})[e]/\langle e^2 =1 \rangle, \tau^{3,0}=  \mathrm{Ad}_{\left(\begin{smallmatrix}
            0 & -1 \\ 1 & 0
        \end{smallmatrix}\right)  \mathbb{K}}  \right).
\end{align}

\item[$\boxed{p+q=4}$] Here we are interested in $C^{0,4}_{\mathbb{R}}$
which can be seen alternatively as:
\begin{align}
    C^{0,4}_{\mathbb{R}} \cong C^{0,3}_{\mathbb{R}} \hat{\otimes} C^{0,1}_{\mathbb{R}}  \cong (\mathbb{H} \otimes  C^{1,0}_{\mathbb{R}} )\hat{\otimes } C^{0,1}_{\mathbb{R}} \cong \mathbb{H} \otimes ( C^{1,0}_{\mathbb{R}} \hat{\otimes } C^{0,1}_{\mathbb{R}}) \cong \mathbb{H} \otimes \mathrm{End}(\mathbb{R},\mathbb{R}).
\end{align}
Here we obtain the well-known fact that the graded Clifford algebra $C^{0,4}_{\mathbb{R}}$ is similar to the quaternions algebra $\mathbb{H}$
localized in degree zero.

After complexification we obtain:
\begin{align}
      C^{0,4}_{\mathbb{R}} \underset{\mathbb{R}}{\otimes} \mathbb{C} \cong 
      (\mathbb{H} \underset{\mathbb{R}}{\otimes} \mathbb{C}) \otimes( M_{1|1}(\mathbb{R})  \underset{\mathbb{R}}{\otimes}\mathbb{C} ) \cong 
      M_2(\mathbb{C}) \otimes M_{1|1}(\mathbb{C}),
\end{align}
where $M_2(\mathbb{C})$ denotes the matrix algebra over the complex numbers
localized in degree zero. The associated antilinear automorphism is simply the operator $\tau^{0,4} =  \mathrm{Ad}_{\left(\begin{smallmatrix}
            0 & -1 \\ 1 & 0
        \end{smallmatrix}\right)  \mathbb{K}} \otimes \mathrm{Ad}_\mathbb{K} $ 
        where the adjoint operator is defined with a matrix of homogeneous degree zero, and the complex conjugation on $M_{1|1}(\mathbb{C})$ is also of degree zero. So we have:
\begin{align} \label{C04 clifford}
         C^{0,4}_{\mathbb{R}} \mapsto  &\left(M_2(\mathbb{C}) \otimes M_{1|1}(\mathbb{C}) , \tau^{0,4} =  \mathrm{Ad}_{\left(\begin{smallmatrix}
            0 & -1 \\ 1 & 0
        \end{smallmatrix}\right)  \mathbb{K}} \otimes \mathrm{Ad}_\mathbb{K} \right),\\
         \mathbb{H} \mapsto & \left(M_2(\mathbb{C}),  \mathrm{Ad}_{\left(\begin{smallmatrix}
            0 & -1 \\ 1 & 0
        \end{smallmatrix}\right)\mathbb{K}}\right) . \label{M2(C) generator Br(R)}
\end{align}        
\end{itemize}

\subsection{Graded division algebras and the tenfold way} The graded version of the famous 
Wedderburn-Artin theorem states that any graded simple algebra over the 
field $k$ is isomorphic to a graded matrix algebra with coefficients
in a {\it graded division algebra} over $k$ \cite[Thm. 6.2.5]{Varadarajan_Supersymmetry}. 
A {\it graded division algebra} over $k$ is a graded algebra where every non-zero
homogeneous element is invertible. 

Frobenius showed \cite{Frobenius} that
there are only three division algebras over $\mathbb{R}$, being them
$\mathbb{R}$, $\mathbb{C}$ and $\mathbb{H}$. Wall\cite{Wall_Graded_Brauer_groups}, and later Deligne \cite[\S 3]{Deligne_Notes_Spinors}, showed that
there are ten graded division algebras over $\mathbb{R}$, three of pure even degree $\mathbb{R}$, $\mathbb{C}$ and $\mathbb{H}$, and seven Clifford 
graded algebras, six real Clifford algebras $C^{3,0}_\mathbb{R}$, $C^{2,0}_\mathbb{R}$, 
$C^{1,0}_\mathbb{R}$, $C^{0,1}_\mathbb{R}$, $C^{0,2}_\mathbb{R}$,and $C^{0,3}_\mathbb{R}$, and one complex Clifford algebra
$C^{1}_\mathbb{C}$. The fact that there are only ten graded division algebras over $\mathbb{R}$ is nice to prove, and an elegant proof has been showed by Baez \cite{Baez_tenfold}. This is our first tenfold classification; let us summarize it.

A graded division algebra $A=A_0\oplus A_1$ over $\mathbb{R}$ 
has for degree zero an algebra $A_0$ whose non-zero elements are 
invertible, hence a division algebra. Therefore $A_0$ is $\mathbb{R}$,
$\mathbb{C}$ or $\mathbb{H}$. 
\begin{align}
    \mathrm{If} \ A_1=\{0\}, \mathrm{then} \ A \ \mathrm{is} \ \mathbb{R}, \mathbb{C} \ \mathrm{or} \ \mathbb{H}.
\end{align}
 If $A_1 \neq \{0\}$ then take $e \in A_1$ 
and note that $e$ is invertible, and moreover $e^2 \in A_0$. 
In the case that $A_0=\mathbb{R}$ we may rescale $e$ in order to obtain $e^2 =1$
or $e^2=-1$. 
\begin{align}
    \mathrm{If} \ A_0=\mathbb{R} \ \mathrm{and} \ A_0=\{0\}, \mathrm{then} \ A \ \mathrm{is}  \left\{ \begin{array}{l}\mathbb{R}[e]/\langle e^2= 1 \rangle, \ \mathrm{or}   \\ \mathbb{R}[e]/\langle e^2=- 1 \rangle \end{array} \right.
\end{align}

If $A_0 = \mathbb{C}$ then the conjugation map $a \mapsto eae^{-1}$ is 
an automorphism of $A_0$, and therefore it is the identity or complex conjugation. In the latter case we have that $ea=ae$ and we may rescale $e$ such that $e^{2}=1$. In the former case we have $ea=\overline{a}e$, and we may rescale $e$ such that either $e^2=1$ or $e^2=-1$.
\begin{align}
    \mathrm{If} \ A_0=\mathbb{C} \ \mathrm{and} \ A_0=\{0\}, \mathrm{then} \ A \ \mathrm{is} \left\{ \begin{array}{l} \mathbb{C}[e]/\langle e^2= 1 \rangle \ \mathrm{with} \ ei=ie,\ \mathrm{or}\\
     \mathbb{C}[e]/\langle e^2= 1 \rangle
    \ \mathrm{with} \ ei=-ie,\ \mathrm{or}\\
     \mathbb{C}[e]/\langle e^2= -1 \rangle
    \ \mathrm{with} \ ei=-ie.
    \end{array} \right.
\end{align}

If $A_0 = \mathbb{H}$ then the conjugation map $a \mapsto eae^{-1}$ is 
an automorphism of $A_0$, and all automorphisms of the quaternions are inner
transformations. This means that there must be a quaternion $q$ such that
$qaq^{-1}=eae^{-1}$, and therefore $q^{-1}e$ commutes with $A_0$. Replacing
$q^{-1}e$ by $e$, we see that $e$ commutes with $A_0$ and therefore $e^2$
also commutes with $A_0$. Since $e^2$ belongs to $A_0$, we see that $e \in \mathbb{R} = Z(\mathbb{H})$. Rescaling $e$ we see that $e^2=1$ or $e^2=-1$.
\begin{align}
    \mathrm{If} \ A_0=\mathbb{H} \ \mathrm{and} \ A_0=\{0\}, \mathrm{then} \ A \ \mathrm{is} \left\{ \begin{array}{l} \mathbb{H}[e]/\langle e^2= 1 \rangle \ \mathrm{with} \ ei=ie, ej=je,\ \mathrm{or}\\
     \mathbb{H}[e]/\langle e^2= -1 \rangle
    \ \mathrm{with} \ ei=ie, ej=je.
    \end{array} \right.
\end{align}

\begin{table}[!h]
\begin{center}
\caption{Tenfold way in terms of the ten real division algebras,
and their relation to the graded Brauer group of the reals and the complex numbers, the graded Brauer group of central simple complex algebras endowed with an antilinear automorphism, and the Altland-Zirnbauer classification of non-interacting fermionic Hamiltonians. The first column corresponds to a numbering
that is compatible with the group structure of the Graded Brauer groups. The second column corresponds to the a choice of 
representatives of the elements in the graded Brauer group over the reals and the complex numbers. The third column is a description of the division algebra that exists on each element of the Brauer groups, The fourth column is a choice of representative for the elements in the graded Brauer group of central simple complex algebras with antilinear automorphism. The last five columns provide the associate 
Altland-Zirnbauer  classification scheme of Hamiltonian symmetries.
The operators $\mathcal{T}$, $\mathcal{C}$ and $\mathcal{S}$ are respectively time reversal, particle hole and chiral symmetry. A zero represents absence of the symmetry while a $\pm 1$ encodes the square of the operator which is preserved. The associated operator is shown in the respective column.
}
\label{Table-tenfold-way}
\begin{tabular}{|c | c |c | c|| c| c|c|c|c|} 
 \hline
 N. & $\mathrm{GrBr}(\mathbb{R})$ & DivAlg$(\mathbb{R})$ & 
 $\mathrm{GrBr}_{(\mathbb{Z}/2,\mathrm{id})}(\mathbb{C})$  & $\mathcal{T}$ & $\mathcal{C}$ & $\mathcal{S}$ & $\mathcal{T}\mathcal{C}\mathcal{S}$  &AZ  \\ [0.5ex] 
 \hline \hline
 0 & $C^{0,0}_\mathbb{R}$ & $\mathbb{R}$ &
 $(\mathbb{C},\mathrm{Ad}_\mathbb{K})$
 & $\mathbb{K}$ & 0& 0&  1|0|0 & AI \\ 
 \hline
 1  & $C^{1,0}_\mathbb{R}$ & $\frac{\mathbb{R}[e]}{\langle e^2 =1 \rangle}$  &
 $ \left( \frac{\mathbb{C}[e]}{\langle e^2=1 \rangle}, \mathrm{Ad}_\mathbb{K}\right)$
 & $\mathbb{K}$& $\mathbb{K}e$& $e$& 1|1|1& BDI  \\
 \hline  
2 & $C^{2,0}_\mathbb{R}$ & $\frac{\mathbb{C}[e]}{\langle e^2 =1,  ei=-ie \rangle} $ & 
$\left(M_{1|1}(\mathbb{C}),   \mathrm{Ad}_{\left(\begin{smallmatrix}
            0 & 1 \\ 1 & 0
        \end{smallmatrix}\right)    \mathbb{K}}\right)$
& 0& ${\left(\begin{smallmatrix}
            0 & 1 \\ 1 & 0
        \end{smallmatrix}\right)  \mathbb{K}}$& 0& 0|1|0&  D \\
 \hline  
3 & $C^{3,0}_\mathbb{R}$ & $\frac{\mathbb{H}[e]}{\langle e^2 = -1 , ei=ie, ej=je\rangle}$ &
$\left(\frac{M_2(\mathbb{C})[e]}{\langle e^2 =-1 \rangle},  \mathrm{Ad}_{\left(\begin{smallmatrix}
            0 & -1 \\ 1 & 0
        \end{smallmatrix}\right)  \mathbb{K} } \right)$
& ${\left(\begin{smallmatrix}
            0 & -1 \\ 1 & 0
        \end{smallmatrix}\right)  \mathbb{K}}$& ${\left(\begin{smallmatrix}
            0 & -1 \\ 1 & 0
        \end{smallmatrix}\right)  \mathbb{K}e}$& $ie$&  -1|1|1 & DIII\\
 \hline  
4 & $C^{0,4}_\mathbb{R}$ & $\mathbb{H}$ &
$\left(M_2(\mathbb{C}),  \mathrm{Ad}_{\left(\begin{smallmatrix}
            0 & -1 \\ 1 & 0
        \end{smallmatrix}\right)  \mathbb{K}}\right)$
& ${\left(\begin{smallmatrix}
            0 & -1 \\ 1 & 0
        \end{smallmatrix}\right)  \mathbb{K}}$& 0& 0& -1|0|0&  AII\\
 \hline 
5 & $C^{0,3}_\mathbb{R}$ & $\frac{\mathbb{H}[e]}{\langle e^2 = 1, ei=ie, ej=je \rangle}$&
$\left(\frac{M_2(\mathbb{C})[e]}{\langle e^2 =1 \rangle},  \mathrm{Ad}_{\left(\begin{smallmatrix}
            0 & -1 \\ 1 & 0
        \end{smallmatrix}\right)  \mathbb{K}}  \right)$
& ${\left(\begin{smallmatrix}
            0 & -1 \\ 1 & 0
        \end{smallmatrix}\right)  \mathbb{K}}$& ${\left(\begin{smallmatrix}
            0 & -1 \\ 1 & 0
        \end{smallmatrix}\right)  \mathbb{K}e}$& $e$ & -1|-1|1&  CII\\
 \hline  
6 & $C^{0,2}_\mathbb{R}$ &  $\frac{\mathbb{C}[e]}{\langle e^2 =-1, ei=-ie \rangle}$&
$\left( M_{1|1}(\mathbb{C}),  \mathrm{Ad}_{\left(\begin{smallmatrix}
            0 & -1 \\ 1 & 0
        \end{smallmatrix}\right)   \mathbb{K}}\right)$
& 0&${\left(\begin{smallmatrix}
            0 & -1 \\ 1 & 0
        \end{smallmatrix}\right)  \mathbb{K}}$ &0 & 0|-1|0 & C \\
 \hline 
7 & $C^{0,1}_\mathbb{R}$ & $\frac{\mathbb{R}[e]}{\langle e^2 = -1 \rangle}$  &
$ \left( \frac{\mathbb{C}[e]}{\langle e^2=-1 \rangle}, \mathrm{Ad}_\mathbb{K}\right)$
& $\mathbb{K}$ &$\mathbb{K}e$ & $ie$& 1|-1|1 & CI \\
 \hline  \hline
0 & $C^0_{\mathbb{C}}$ & $\mathbb{C}$ & $\mathbb{C}$ & 0& 0&0 & 0|0|0& A\\
 \hline  
1 & $C^1_{\mathbb{C}}$ & $\frac{\mathbb{C}[e]}{\langle e^2 =1, ei=ie \rangle} $ & $\frac{\mathbb{C}[e]}{\langle e^2 =1 \rangle}$ & 0& 0& $e$ &  0|0|1&AIII\\
 \hline \hline
  N. & $\mathrm{GrBr}(\mathbb{C})$ & DivAlg$(\mathbb{R})$ & 
 $\mathrm{GrBr}(\mathbb{C})$  & $\mathcal{T}$ & $\mathcal{C}$ & $\mathcal{S}$ & $\mathcal{T}\mathcal{C}\mathcal{S}$ & AZ  \\
 \hline
\end{tabular}
\end{center}
\end{table}

Another tenfold classification structure was shown by Dyson \cite{Dyson_Threefold} when studying the symmetries of Hamiltonianis
in nuclear physics, where the threefold classification of complex linear representations was generalized to a tenfold classification whenever
the representations possess both complex linear and complex antilinear
symmetry transformations. A more recent appearance of this tenfold
structure
is the Altland-Zirnbauer (AZ) scheme classification of free-fermion Hamiltonians \cite{Altland}, a classification which was masterfully rewritten
by Kitaev \cite{kitaev}
in terms of the 2 types of complex Clifford algebras and the 8 types of
real Clifford algebras with the name of "Periodic table for topological insulators and superconductors".

Let us recall the basic ideas in the AZ classification \cite{Altland} of non-interacting fermionic Hamiltonians in terms of ten symmetry classes \cite{FreedMoore_Twisted_equivariant_matter}. These are classified in terms of the presence or
absence of certain discrete symmetries on a prescribed Hamiltonian \cite[\S II]{Schnyder_Classification-of-topological-insulators}.

Suppose that this Hamiltonian encodes the electromagnetic properties of
some crystal and take its eigenvectors and its spectrum. Assume that the Fermi level (energy of occupied bands) is set at zero and denote
by {\it valence bands } the vector space generated by the negative energy eigenvectors, and {\it conduction bands} the vector space generated by
positive energy eigenvectors. 

Assume furthermore that the Hamiltonian is gapped at the Fermi level,
thus modeling an insulating material, and permitting to have a well defined
vector space of valence and conduction bands.

Assign degree zero to the valence bands and degree one to the conduction bands.
This way the vector space spanned by the eigenvectors of the Hamiltonian becomes a graded vector space.

A Hamiltonian preserves {\it time reversal symmetry} (TRS) if there is a
$\mathbb{C}$-antilinear operator $\mathcal{T}$, such that:
\begin{align}
    \mathcal{T} \mathcal{H} \mathcal{T}^{-1} = \mathcal{H}, \ \mathrm{with}
    \ \mathcal{T}= U_{\mathcal{T}} \mathbb{K} \ \mathrm{and} \ \mathcal{T}^{2}=\pm 1
\end{align}
where $U_{\mathcal{T}} $ is a unitary operator ($U_{\mathcal{T}}^{-1}=U_{\mathcal{T}}^\dagger$). The fact that the operator is $\mathbb{C}$-antilinear induces two non-equivalent possibilities for the operator $\mathcal{T}^2$ as was shown by Wigner \cite{wigner}.
If the Hamiltonian preserves TRS, then the TRS operator $\mathcal{T}$
acts on the graded vector space spanned by the eigenvectors of the Hamiltonian as a degree zero operator. 

A Hamiltonian preserves {\it particle hole symmetry} (PHS) if there is a
$\mathbb{C}$-antilinear operator $\mathcal{C}$, such that:
\begin{align}\label{PHS}
    \mathcal{C} \mathcal{H} \mathcal{C}^{-1} = - \mathcal{H}, \ \mathrm{with}
    \ \mathcal{C}= U_{\mathcal{C}} \mathbb{K} \ \mathrm{and} \ \mathcal{C}^{2}=\pm 1
\end{align}
where $U_{\mathcal{C}} $ is a unitary operator ($U_{\mathcal{C}}^{-1}=U_{\mathcal{C}}^\dagger$). If the Hamiltonian preserve PHS, then the operator $\mathcal{C}$ exchanges occupied states with  unoccupied ones, or particles with antiparticles (holes), inverting the charge.
It changes the sign of the energy of an eigenstate of the Hamiltonian, and therefore acts on the graded vector space spanned by the eigenvectors of the Hamiltonian as a degree one operator. 

If a Hamiltonian preserves both TRS and PHS, then it preserves the composition $\mathcal{S}=\mathcal{T}\mathcal{C}$. This is a $\mathbb{C}$-linear transformation, which after rescaling can be made to satisfy $\mathcal{S}^2=1$. This symmetry is called {\it chiral symmetry} and satisfies:
\begin{align}
    \mathcal{S} \mathcal{H} \mathcal{S}^{-1} = - \mathcal{H}, \ \mathrm{with}
    \ \mathcal{S}= U_{\mathcal{S}} \ \mathrm{and} \ \mathcal{S}^{2}= 1
\end{align}
with $U_{\mathcal{S}}$ a unitary operator ($U_{\mathcal{S}}^{-1}= U_{\mathcal{S}}^\dagger$). The chiral symmetry operator $\mathcal{S}$ becomes a degree one operator
exchanging the valence and the conduction bands without inverting the charge.  Note that a Hamiltonian
may possess chiral symmetry without possessing TRS or PHS. 

The ten different combinations of the three symmetries is known as the AZ
classification scheme and it appears in Table \ref{Table-tenfold-way}. If a symmetry is not present, a $0$ appears, and
if a symmetry is present, a $\pm 1$ appears characterizing the square of the operator.

This tenfold classification could be understood as an alternative appearance
of the fact that there are only ten real division algebras. Since
the operators $\mathcal{T}$,  $\mathcal{C}$ and  $\mathcal{S}$
act on the complex vector space of eigenvectors of the Hamiltonian, we could
focus on the complex algebra that they generate as operators. 
Comparing the modules over these algebras with the modules of the algebras that generate the Graded Brauer groups $\mathrm{GrBr}(\mathbb{C})$ and $\mathrm{GrBr}_{(\mathbb{Z}/2, \mathrm{id})}(\mathbb{C})$, we find the following set of matchings:
\begin{itemize}
    \item[\boxed{\mathrm{A}}] No symmetries. Corresponds to the algebra $\mathbb{C}$.
    \item[\boxed{\mathrm{AIII}}] Corresponds to the graded algebra $\mathbb{C}[e]/ \langle e^2=1 \rangle$ where $\mathcal{S}$ is mapped to the operator $e$.
    \item[\boxed{\mathrm{AI}}] Corresponds to the algebra $\mathbb{C}$ with  antilinear automorphism $\mathbb{K}$. Here $\mathbb{T}$ is mapped to $\mathbb{K}$.
    \item[\boxed{\mathrm{AII}}] Corresponds to the algebra $M_2(\mathbb{C})$ with antilinear automorphism given by the adjoint of the matrix $\left(\begin{smallmatrix}
            0 & -1 \\ 1 & 0
        \end{smallmatrix}\right)$ with $\mathbb{K}$. Here $\mathcal{T}$ is mapped to the composition of operators $\left(\begin{smallmatrix}
            0 & -1 \\ 1 & 0
        \end{smallmatrix}\right) \mathbb{K}$.
    \item[\boxed{\mathrm{BDI}\ \& \ \mathrm{CI}}] Corresponds to the graded algebras $\mathbb{C}[e]/\langle e^2= \pm 1 \rangle$ with antilinear automorphism $\mathbb{K}$. Here
    $\mathcal{T}$ is mapped to $\mathbb{K}$ and $\mathcal{C}$ is mapped to $e\mathbb{K}$.
    \item[\boxed{\mathrm{D}\ \& \ \mathrm{C}}]  Corresponds to the graded algebra $M_{1|1}(\mathbb{C})$ with antilinear automorphism given by the adjoint of the degree one operator $\left(\begin{smallmatrix}
            0 & \pm 1 \\ 1 & 0
        \end{smallmatrix}\right)$ composed with $\mathbb{K}$. Here $\mathcal{C}$ is mapped to $\left(\begin{smallmatrix}
            0 & \pm 1 \\ 1 & 0
        \end{smallmatrix}\right)\mathbb{K}$.
        \item[\boxed{\mathrm{DIII}\ \& \ \mathrm{CII}}]  Corresponds to the algebras $M_2(\mathbb{C})[e] / \langle e^2= \pm 1 \rangle$ with antilinear automorphism given by the adjoint of $\left(\begin{smallmatrix}
            0 & - 1 \\ 1 & 0
        \end{smallmatrix}\right)$ composed with $\mathbb{K}$. Here $\mathcal{T}$ is mapped to the operator $\left(\begin{smallmatrix}
            0 & - 1 \\ 1 & 0
        \end{smallmatrix}\right) \mathbb{K}$  and $\mathcal{C}$ is mapped
        to the operator $\left(\begin{smallmatrix}
            0 & - 1 \\ 1 & 0
        \end{smallmatrix}\right)  \mathbb{K}e$.
 \end{itemize}





\section{Magnetic Equivariant Graded Brauer group}

In the study of the electromagnetic properties
of periodic free-fermion Hamiltonians associated to crystals, the  crystal symmetries play a fundamental role. Whenever the crystal has a non-trivial internal magnetization, there might be symmetries of the Hamiltonian which are the composition of either TRS, PHS or chiral symmetry with the geometrical symmetries of the crystal. This gives rise to the concept of  magnetic groups, where the elements of the group may act complex linearly or complex antilinearly \cite{serrano2025magneticequivariantktheory}.

In this section we are going to study magnetic equivariant simple central complex algebras and we are going to explicitly calculate its similarity classes. This group will be called the {\it{magnetic equivariant graded Brauer group.}}

Now, the magnetic and equivariant version of the Graded Brauer group incorporates results of magnetic and equivariant projective representations. Therefore, prior to studying the Brauer group we will
review some results in the theory of non abelian cohomology and in the theory of magnetic projective representations. The we will introduce the magnetic equivariant graded Brauer group associated to a magnetic group, and we provide an explicit calculation of this group in terms of cohomology of groups; this is Thm. \ref{Theorem decomposition GrBr} and it is the main result of this section.
Then we will relate this Brauer group with the graded equivariant Brauer groups of real and complex algebras and we calculate some explicit examples. 

\subsection{Magnetic groups and magnetic equivariant representations}
A magnetic group consist of a pair $(G, \phi)$ where $G$ is a group and $\phi : G \to \mathbb{Z}/2$ is a surjective homomorphism. Morphisms of magnetic groups $(G, \phi) \to (G' ,\phi')$ consist of homomorphisms of groups $f: G \to G'$ which are compatible with the projections to $\mathbb{Z}/2$, ie. $ \phi' \circ f = \phi$.  
In this work we will focus our attention only to finite magnetic groups.

The magnetic general linear group $\mathrm{MGL}_n(\mathbb{C})$ is the semidirect product: 
\begin{align}
    \mathrm{MGL}_n(\mathbb{C}) := \mathrm{GL}_n(\mathbb{C}) \rtimes \mathbb{Z}/2
\end{align}
where $\mathrm{GL}_n(\mathbb{C})$ is the general linear group of invertible $n$x$n$ complex matrices, and
 $\mathbb{Z}/2$ acts by complex conjugation on the matrices of $\mathrm{GL}_n(\mathbb{C})$. Explicitly the product is:
\begin{align}
    (\Lambda_0,\tau_0) (\Lambda_1,\tau_1) =  (\Lambda_0\ {\mathbb{K}}^{\tau_0}\Lambda_1,\tau_0\tau_1), \ \ \ \ \
    {\mathbb{K}}^{\tau_0}\Lambda_1 = \begin{cases}
        \Lambda_1 & \mathrm{if} \ \tau_0=1\\
        \mathbb{K} \Lambda_1=\overline{\Lambda_1} & \mathrm{if} \ \tau_0=-1.
    \end{cases}
\end{align}
The surjective homomorphism $\mathrm{GL}_n(\mathbb{C}) \rtimes \mathbb{Z}/2 \to \mathbb{Z}/2$ simply projects into the second coordinate.

If $V$ is a complex vector space, the magnetic general linear group $\mathrm{MGL}(V)$ of $V$
consists of the group of invertible endomorphisms of $V$ which are complex linear of complex antilinear. The projection $\mathrm{MGL}(V) \to \mathbb{Z}/2$ map the complex linear maps to zero and the complex antilinear maps to one. If one chooses a complex base base $V \cong \mathbb{C}^n$, then we obtain an isomorphism of magnetic groups
$\mathrm{MGL}(V) \cong \mathrm{MGL}_n(\mathbb{C})$.

A complex vector space $V$ becomes a magnetic representation of the magnetic group $(G, \phi)$ if it is endowed with a morphism of magnetic groups $\rho : G \to \mathrm{MGL}(V)$. A morphism between two magnetic representations $\rho_0: G \to \mathrm{MGL}(V_0)$ and $\rho_1: G \to \mathrm{MGL}(V_1)$ is a complex linear homomorphism $T: V_0 \to V_1$ such that $T \rho_0(g)= \rho_1(g)T$ for all $g \in G$. 

Any finite dimensional representation of a finite magnetic group is the direct sum of irreducible
magnetic representations. 
If $V$ is an irreducible magnetic representation of the magnetic group $(G, \phi)$, the algebra of $G$-invariant endomorphism of $V$ gives rise to the three-fold classification originally due to Wigner \cite{wigner}. If $V|_{G_0}$ denotes the complex representation restricted to $G_0= \mathrm{ker}(\phi)$, then we have the classification of magnetic irreducible representations by their type: 
\begin{align} \label{wigenr irreps}
    \mathrm{End}(V)^G = \begin{cases}
        \mathbb{R} & \mathrm{if} \ V|_{G_0} \ \mathrm{is \ irreducible},\\
        \mathbb{H} & \mathrm{if} \ V|_{G_0} \cong W \oplus W\\
        \mathbb{C} & \mathrm{otherwise}.
    \end{cases}
\end{align}
where $W$ denotes an irreducible complex $G_0$-representation.
Further properties of magnetic representations of finite magnetic groups can be read in the summary presented by Xicot\'encatl and the authors in \cite[\S 1]{serrano2025magneticequivariantktheory}, or in Wigner's book \cite{wigner}.

\subsection{Non-Abelian cohomology}
Let us recall the definition of the first cohomology pointed set $H^1(G,\mathcal{A})$ when $\mathcal{A}$ is a group which is not necessarily abelian.

Let $G$ be a group and $\mathcal{A}$ another group with a left action of $G$ by automorphisms. Denote by: 
\begin{equation}
    Z^1(G,\mathcal{A})    
\end{equation}
the set of all maps $f:G\to \mathcal{A}$ such that: 
\begin{equation}\label{1-nonabcond}
    f(gh)=f(g) \left(g\cdot f(h)\right)
\end{equation}
where $f(g)$ times $g\cdot f(h)$ uses the multiplication in $\mathcal{A}$ and $g\cdot f(h)$ is given by the action of $G$ on $\mathcal{A}$. Note that $\mathcal{A}$ acts on $Z^1(G,\mathcal{A})$ by conjugation:
\begin{equation}\label{nonabconju}
    (a\cdot f)(g):= a f(g)  (g\cdot(a^{-1}))
\end{equation}
where $a\in \mathcal{A}$, $f\in Z^1(G,\mathcal{A})$ and $g\in G$. In fact we have the equalities:
\begin{align*}
    (a\cdot f)(gh)= & a f(gh)  \left((gh)\cdot (a^{-1})\right) \\
    = & a f(g) (g \cdot f(h))  ((gh)\cdot (a^{-1})) \\
    = &  a f(g)( g\cdot a ) (g\cdot a^{-1}) (g \cdot f(h))  ((gh)\cdot (a^{-1})) \\
    = & (a\cdot f)(g) \left(g\cdot (a^{-1} f(h) (h\cdot(a^{-1})))\right) \\
    =& (a\cdot f)(g)  (g\cdot((a\cdot f)(h))),
\end{align*}
thus showing that $(a\cdot f) \in Z^1(G,\mathcal{A})$.

Define the first cohomology set:
\begin{equation}
    H^1(G,\mathcal{A}):= Z^1(G,\mathcal{A})/ \mathcal{A}
\end{equation}
as the set of equivalent classes of $Z^1(G,\mathcal{A})$ modulo the action of $\mathcal{A}$. The constant 1-cocycle
    $g\mapsto 1$
is called the base point of $H^1(G,\mathcal{A})$. Clearly, if $\mathcal{A}$ is abelian, then $H^1(G,\mathcal{A})$ is the usual first cohomology group of $G$ with coefficients in $\mathcal{A}$. Now, let us recall a fundamental result from \cite{Gille_Central_Simple_algebras} about exact sequences of pointed sets. A sequence of pointed sets: 
\begin{equation}
    (\mathcal{A},a)\overset{f}{\longrightarrow} (\mathcal{B},b) \overset{g}{\longrightarrow} (\mathcal{C},c)  
\end{equation}
is exact at $(\mathcal{B},b)$ if $g^{-1}(c)$, the kernel of $g$, is equal to the image of $f$.

From \cite[Prop. 2.7.1 \& 4.4.1]{Gille_Central_Simple_algebras} we quote:

\begin{proposition}\label{exactseqnonab}
    Let $G$ be a group and 
    \begin{equation}
        1\longrightarrow \mathcal{A} \longrightarrow \mathcal{B} \longrightarrow \mathcal{C} \longrightarrow 1
    \end{equation}
    an exact sequence of groups equipped with $G$-actions and with $G$-equivariant homomorphisms. Then
     there is an induced  exact sequence of pointed sets:
        \begin{equation}
            1 \rightarrow \mathcal{A}^G \rightarrow \mathcal{B}^G \rightarrow \mathcal{C}^G \rightarrow H^1(G,\mathcal{A}) \rightarrow H^1(G,\mathcal{B}) \rightarrow H^1(G,\mathcal{C}).
        \end{equation}
  Moreover, if $\mathcal{A}$ is abelian and it is contained in the center of $\mathcal{B}$, then the induced exact sequence of pointed sets extends one term to the right:
        \begin{equation}
            1 \rightarrow \mathcal{A}^G \rightarrow \mathcal{B}^G \rightarrow \mathcal{C}^G \rightarrow H^1(G,\mathcal{A}) \rightarrow H^1(G,\mathcal{B}) \rightarrow H^1(G,\mathcal{C}) \rightarrow H^2(G,\mathcal{A}).
            \end{equation}
\end{proposition}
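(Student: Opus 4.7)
The plan is to construct the two connecting maps $\delta_1 : C^G \to H^1(G,A)$ and (in the central case) $\delta_2 : H^1(G,C) \to H^2(G,A)$ by hand, and then verify exactness at each of the six (respectively seven) positions. Throughout, I identify $A$ with its image in $B$. The first three terms of the sequence are just the left-exact functor of $G$-fixed points, so the only work there is to write down kernels and images.

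To build $\delta_1$, I start with $c\in C^G$, choose any lift $b\in B$ with $\pi(b)=c$, and define $f_b : G \to A$ by $f_b(g) = b^{-1}(g\cdot b)$. The image lies in $A$ because $\pi(b^{-1}(g\cdot b)) = c^{-1}(g\cdot c) = 1$. I check directly from the product rule that $f_b$ satisfies the twisted cocycle identity \eqref{1-nonabcond}, and that replacing $b$ by $ba$ with $a\in A$ changes $f_b$ to $(a^{-1})\cdot f_b$ in the sense of \eqref{nonabconju}, so the class $\delta_1(c) := [f_b]\in H^1(G,A)$ is well defined. Exactness at $C^G$ amounts to: $\delta_1(c)$ is trivial iff there exists $a\in A$ with $b^{-1}(g\cdot b) = a^{-1}(g\cdot a)$ for all $g$, i.e.\ iff $ba^{-1}\in B^G$ lifts $c$. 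Exactness at $H^1(G,A)$ is the statement that a cocycle $f:G\to A$ becomes a coboundary in $B$, i.e.\ $f(g) = b^{-1}(g\cdot b)$ for some $b\in B$, precisely when $\pi(b)\in C^G$ and $\delta_1(\pi(b)) = [f]$. Exactness at $H^1(G,B)$ follows because a cocycle $\beta : G\to B$ becomes a coboundary in $C$ iff some modification $\beta'(g) = b^{-1}\beta(g)(g\cdot b)$ lies in $A$.

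For the extension when $A$ is abelian and central in $B$, I construct $\delta_2$ as follows: given a class in $H^1(G,C)$ represented by $\gamma : G \to C$, I choose a set-theoretic lift $\tilde\gamma : G \to B$ and define
\begin{equation}
    \alpha(g,h) := \tilde\gamma(g)\, \bigl(g\cdot\tilde\gamma(h)\bigr)\, \tilde\gamma(gh)^{-1}.
\end{equation}
Because $\gamma$ is a cocycle in $C$, $\alpha$ takes values in $A$. Since $A$ is central in $B$, the usual associativity computation shows $\alpha$ is a 2-cocycle in the abelian group $A$ with its $G$-action, and modifying $\tilde\gamma$ by an $A$-valued function or changing $\gamma$ within its $B$-conjugacy class changes $\alpha$ only by a 2-coboundary. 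Thus $\delta_2[\gamma] := [\alpha]\in H^2(G,A)$ is well defined.

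Finally, I verify exactness at $H^1(G,C)$: a class $[\gamma]$ lies in the image of $H^1(G,B)\to H^1(G,C)$ iff one can choose the lift $\tilde\gamma$ to itself be a $B$-valued cocycle, which is exactly the condition $\alpha \equiv 1$, i.e.\ $\delta_2[\gamma] = 0$. The main subtlety in the whole proof is bookkeeping in the non-abelian setting, in particular ensuring that ``cohomologous'' is interpreted as the conjugation action \eqref{nonabconju} and not as pointwise multiplication, and I expect the most delicate step to be the verification that $\delta_2$ is independent of the chosen lift $\tilde\gamma$ and of the representative $\gamma$ within its cohomology class, since this is where the centrality hypothesis on $A$ is genuinely needed.
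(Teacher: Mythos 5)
The paper gives no proof of this proposition: it is explicitly quoted from Gille and Szamuely, \cite[Prop.\ 2.7.1 \& 4.4.1]{Gille_Central_Simple_algebras}, so there is no ``paper's approach'' to compare against. Your argument is the standard direct construction of the two connecting maps and is correct as far as it goes: the cocycle identities for $f_b$ and $\alpha$ check out, the independence of choices is handled appropriately, and centrality of $A$ in $B$ is invoked exactly where it is needed (to commute $g\cdot\alpha(h,k)$ past $\tilde\gamma(g)$ in the $2$-cocycle verification, and to commute the $A$-valued modifications past the $\tilde\gamma$'s in the well-definedness check). One small point of rigor you glossed over: in the exactness check at $H^1(G,C)$ you say the kernel condition is ``$\alpha\equiv 1$'', but what you actually have is $[\alpha]=0$ in $H^2(G,A)$, i.e.\ $\alpha=\delta a$ for some $a:G\to A$; you should then observe that replacing $\tilde\gamma$ by $a^{-1}\tilde\gamma$ kills $\alpha$ pointwise, so the two formulations agree. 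Similarly, exactness at $A^G$, $B^G$, $C^G$ is correct but only asserted, not verified. With those two minor elaborations the proof is complete and is the same proof one finds in the cited reference.
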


 Suppose $\mathcal{A}$ is the direct limit of a $G$-directed set $(\mathcal{A}_k,\psi_k:\mathcal{A}_k\to \mathcal{A}_{k+1})_{k\in \mathbb{N}}$ such that for each $i,j\in \mathbb{N}$ there exist a map: 
\begin{equation} \label{product of direct limit}
    \Gamma_{ij}: \mathcal{A}_i\times \mathcal{A}_j \longrightarrow \mathcal{A}_{i+j} 
\end{equation}
such that:
\begin{itemize}
    \item $\Gamma_{ij}(g\cdot a, g\cdot b)=g\cdot\Gamma_{ij}(a,b)$ for all $g\in G$, $a\in \mathcal{A}_i$ and $b\in \mathcal{A}_j$.
    \item $\Gamma_{ij}(aa',bb')=\Gamma_{ij}(a,b)\cdot\Gamma_{ij}(a',b')$ for all $a,a'\in \mathcal{A}_i$ and $b,b'\in \mathcal{A}_j$.
    \item 
     $\Gamma_{i+1j}(\psi_i(a),b)=\psi_{i+j+1}(\Gamma_{ij}(a,b))=\Gamma_{ij+1}(a,\psi_j(b))$
    for all  $a\in \mathcal{A}_i$ and $b\in \mathcal{A}_j$.
\end{itemize}

In this case we can define a product on $H^1(G,\mathcal{A})$ as follows.
Define the product on the set of 1-cocycles by the formula:
\begin{align}\label{multofnonabcoh}
    Z^1(G,\mathcal{A}_i)\times Z^1(G,\mathcal{A}_j) & \longrightarrow Z^1(G,\mathcal{A}_{i+j}) \nonumber \\
    (f_0,f_1) & \longmapsto f_0 f_1  \hspace{1cm} (f_0 f_1)(a)=\Gamma_{ij}(f_0(a),f_1(a)),
\end{align}
and it can be shown that this product commutes with the action of $G$, thus inducing a well-defined product in cohomology:
\begin{align}
    H^1(G,\mathcal{A})\times H^1(G,\mathcal{A}) & \longrightarrow H^1(G,\mathcal{A}). 
\end{align}

\subsection{Magnetic equivariant projective representations.}

The magnetic projective linear group is the semi-direct product: 
\begin{align}
    \mathrm{MPGL}_n(\mathbb{C}) := \mathrm{PGL}_n(\mathbb{C}) \rtimes \mathbb{Z}/2
\end{align} 
where $\mathrm{PGL}_n(\mathbb{C})$ is the 
projective general linear group 
$\mathrm{MGL}_n(\mathbb{C})/\mathbb{C}^*$.
It
fits into the short exact sequence of groups:
\begin{align}
  1 \to  \mathbb{C}^*  \to \mathrm{MGL}_n(\mathbb{C}) \to \mathrm{MPGL}_n(\mathbb{C}) \to 1
\end{align}
where $\mathrm{MPGL}_n(\mathbb{C})$ acts on $ \mathbb{C}^*$
by complex conjugation via its image in the group $\mathbb{Z}/2$.

The inclusion of groups: 
\begin{align}
\mathrm{GL}_n(\mathbb{C}) \to \mathrm{GL}_{n+1}(\mathbb{C}), \ \ \ 
M \mapsto \left(\begin{smallmatrix}
    M & 0 \\
    0 & 1
\end{smallmatrix} \right)
\end{align}
allow us to define the direct limit of groups:
\begin{align}
    \mathrm{GL}_\infty(\mathbb{C}) = \lim_n \mathrm{GL}_n(\mathbb{C}),
\end{align}
and the groups $\mathrm{PGL}_\infty(\mathbb{C})$, $\mathrm{MGL}_\infty(\mathbb{C})$ and $\mathrm{MPGL}_\infty(\mathbb{C})$ are defined accordingly, all
of them fitting into the product in a direct limit presented in Eqn. \eqref{product of direct limit}.

A magnetic $(G,\phi)$-representation is nothing else than
a magnetic homomorphism $(G, \phi) \to \mathrm{MGL}_\infty(\mathbb{C})$
and a projective one is a homomorphism
$(G, \phi) \to \mathrm{MPGL}_\infty(\mathbb{C})$.
Because of the finite dimensionality property of the magnetic group $(G,\phi)$,
a magnetic $(G,\phi)$-representation factors through a finite dimensional one $(G, \phi) \to \mathrm{MGL}_n(\mathbb{C})$; the same
applies to magnetic projective representations.

Note that a magnetic representation: 
\begin{align}
    (G, \phi) \to \mathrm{MGL}_\infty(\mathbb{C}), \ \ g \mapsto (\Lambda_g, \phi(g))
\end{align}
can also be represented as a $\phi$-twisted homomorphism from $G$ to $\mathrm{GL}_\infty(\mathbb{C})$.
The product in the group $g \cdot h = gh$ transforms into the equation:
\begin{align}
    (\Lambda_g,\phi(g))(\Lambda_h, \phi(h))=(\Lambda_{(gh)}, \phi(gh))
\end{align}
which implies the equation:
\begin{align}
    \Lambda_g \ {\mathbb{K}}^{\phi(g)}{\Lambda_h} = \Lambda_{gh}.
\end{align}
Hence a magnetic representation induces an assignment $g \mapsto \Lambda_g$ such that: 
\begin{align}\label{twistedhom}
    \Lambda_g \ {\mathbb{K}}^{\phi(g)}{\Lambda_h} \  {\Lambda_{gh}}^{-1}=1,
\end{align}
which is precisely the definition of a $\phi$-twisted homomorphism from $G$ to $\mathrm{GL}_\infty(\mathbb{C})$.  

We see that the condition of $\phi$-twisted homomorphism \ref{twistedhom} is nothing but the 1-cocycle condition \ref{1-nonabcond} for $A=\mathrm{GL}_\infty(\mathbb{C}_\phi)$, so we have: 
\begin{align}
    Z^1(G, \mathrm{GL}_\infty(\mathbb{C}_\phi))= & \,\{\phi-\text{twisted homomorphisms from } G \text{ to } \mathrm{GL}_\infty(\mathbb{C}_\phi)\} \\
    Z^1(G, \mathrm{PGL}_\infty(\mathbb{C}_\phi))= & \, \{\phi-\text{twisted homomorphisms from } G \text{ to } \mathrm{PGL}_\infty(\mathbb{C}_\phi)\}
\end{align}
and therefore we have the following isomorphisms of sets:
\begin{align}
    Z^1(G, \mathrm{GL}_\infty(\mathbb{C}_\phi)) &\cong 
    \mathrm{Hom}_{Mag}(G,\mathrm{MGL}_\infty(\mathbb{C})),\\
    Z^1(G, \mathrm{PGL}_\infty(\mathbb{C}_\phi)) &\cong 
    \mathrm{Hom}_{Mag}(G,\mathrm{MPGL}_\infty(\mathbb{C})).
\end{align}

We say that two $(G,\phi)$-representations are equivalent, or conjugate, if there is a complex change of basis from one to the other. That is, for magnetic representations $\rho, \rho': (G,\phi) \to \mathrm{MGL}_n(\mathbb{C})$ , they are equivalent if there exists $M \in \mathrm{GL}_n(\mathbb{C})$ such that:
\begin{align}
\rho'= (M,1)\ \rho\  (M^{-1},1).
\end{align}
For projective representations the same definition of 
equivalence holds. 

In terms of $\phi$-twisted homomorphisms the conjugation equation 
looks like:
\begin{align}
    M \  \Lambda_g \ {\mathbb{K}}^{\phi(g)}M^{-1} = \Lambda_g',
\end{align}
which is nothing but condition \ref{nonabconju}, and therefore we have bijections of pointed sets:
\begin{align}
  H^1(G,\mathrm{GL}_\infty(\mathbb{C_\phi) }) & \cong   \mathrm{Hom}_{Mag}(G,\mathrm{MGL}_\infty(\mathbb{C}))/_{equiv}\\
  H^1(G,\mathrm{PGL}_\infty(\mathbb{C_\phi) })&\cong  \mathrm{Hom}_{Mag}(G,\mathrm{MPGL}_\infty(\mathbb{C}))/_{equiv}.
\end{align}
where the right-hand side denote the set of equivalence classes of magnetic (projective) representations and the base point of the right-hand side is the representation $g\mapsto \mathbb{K}^{\phi(g)}$.

Note that the set $H^1(G,\mathrm{GL}_\infty(\mathbb{C_\phi) })$
can be made into an abelian group by the construction \ref{multofnonabcoh} with $\Gamma_{ij}$ the tensor product of representations.
This is the map:
\begin{align}
    \mathrm{Hom}_{Mag}(G,\mathrm{MGL}_p(\mathbb{C})) \times \mathrm{Hom}_{Mag}(G,\mathrm{MGL}_q(\mathbb{C})) & \to \mathrm{Hom}_{Mag}(G,\mathrm{MGL}_{pq}(\mathbb{C}))\\
    \ (\rho,\rho') & \mapsto \rho \otimes \rho'
\end{align}
which is compatible with the equivalence classes of representations. 
Projective representations can also be tensored and thus 
$H^1(G,\mathrm{PGL}_\infty(\mathbb{C_\phi) })$ is also an abelian group and 
the canonical map:
\begin{align}
    H^1(G,\mathrm{GL}_\infty(\mathbb{C_\phi) }) \to H^1(G,\mathrm{PGL}_\infty(\mathbb{C_\phi) })
\end{align}
is a homomorphism of groups.

Now, by Prop. \ref{exactseqnonab} we know that the short exact sequence of groups with $G$-action:
\begin{align}
  1 \to  \mathbb{C^*_\phi}\to \mathrm{GL}_\infty(\mathbb{C}_\phi) \to \mathrm{PGL}_\infty(\mathbb{C}_\phi) \to 1,
\end{align}
where $G$ acts through $\phi$ by complex conjugation on the three groups,
induces the long exact sequence of groups:
\begin{align} \nonumber
    (\mathbb{C}^*_\phi)^G \to \mathrm{GL}_\infty(\mathbb{C}_\phi)^G \to \mathrm{PGL}_\infty(\mathbb{C}_\phi)^G \to H^1(G, \mathbb{C}^*_\phi)
    \to&\\
    H^1(G, \mathrm{GL}_\infty(\mathbb{C}_\phi)) \to&  H^1(G, \mathrm{PGL}_\infty(\mathbb{C}_\phi)) \to H^2(G, \mathbb{C}^*_\phi).
\end{align}

Here the connection homomorphism $ H^1(G, \mathrm{PGL}_\infty(\mathbb{C}_\phi)) \to H^2(G, \mathbb{C}^*_\phi)$
can be understood as the assignment: 
\begin{align}
\mathrm{Hom}_{Mag}(G,\mathrm{MPGL}_\infty(\mathbb{C}))/_{equiv} & \to H^2(G, \mathbb{C}^*_\phi)\\
\left[  \psi : G \to \mathrm{MPGL}_\infty(\mathbb{C}) \right]
 &  \mapsto [\widetilde{G} := \psi^* \mathrm{MGL}_\infty(\mathbb{C})]
\end{align}
where $\widetilde{G}$ is the pullback in the following diagram:
\begin{align}
    \xymatrix{ 1 \ar[r] &
\mathbb{C}^* \ar@{-}[d] \ar[r] & \widetilde{G} \ar[d]\ar[r] & G \ar[d]^{\psi}  \ar[r] & 1\\
1 \ar[r] & \mathbb{C}^* \ar[r] & \mathrm{MGL}_\infty(\mathbb{C}) \ar[r] & \mathrm{MPGL}_\infty(\mathbb{C})  \ar[r] & 1,
}
\end{align}
and $[\widetilde{G}] \in H^2(G, \mathbb{C}^*_\phi)$ is the equivalence class of the extension $\widetilde{G}$ of $G$
by $\mathbb{C}^*$.

Generalizing to the equivariant setup  \cite[Theo. 4.4.5]{Gille_Central_Simple_algebras}, we claim the following result:
\begin{proposition} \label{prop twisted reps = cocycles}
    The canonical map:
    \begin{align}
        H^1(G, \mathrm{PGL}_\infty(\mathbb{C}^*_\phi)) {\to} H^2(G, \mathbb{C}^*_\phi)
    \end{align}
is surjective. Therefore we have an isomorphism:
\begin{align}
    \frac{H^1(G, \mathrm{PGL}_\infty(\mathbb{C}^*_\phi))}
    {{\mathrm{image}}(H^1(G, \mathrm{GL}_\infty(\mathbb{C}^*_\phi)))} \cong H^2(G, \mathbb{C}^*_\phi)
\end{align}
\end{proposition}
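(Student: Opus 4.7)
The plan is to prove surjectivity of the connecting homomorphism
\[\delta \colon H^1(G, \mathrm{PGL}_\infty(\mathbb{C}^*_\phi)) \longrightarrow H^2(G, \mathbb{C}^*_\phi)\]
by explicit construction: given any normalized 2-cocycle $\alpha$, I will exhibit a magnetic projective representation $\psi_\alpha$ of $(G,\phi)$ whose obstruction class is precisely $[\alpha]$. Once this is done, the displayed isomorphism falls out immediately from the exactness of the long exact sequence appearing just before the Proposition, since the image of $H^1(G, \mathrm{GL}_\infty(\mathbb{C}_\phi))$ is exactly the kernel of $\delta$.

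The construction is the $\phi$-twisted analog of Schur's twisted regular representation. Since the underlying group $G$ is finite, I take $V$ to be the complex vector space with basis $\{e_h : h \in G\}$ and define $\tilde\psi_\alpha(g) \in \mathrm{MGL}(V)$ to be the $\phi(g)$-linear operator
\[\tilde\psi_\alpha(g)(\lambda\, e_h) := \mathbb{K}^{\phi(g)}(\lambda) \cdot \alpha(g, h) \cdot e_{gh}.\]
A direct computation, invoking the $\phi$-twisted 2-cocycle identity
\[\mathbb{K}^{\phi(g)}\!\bigl(\alpha(h,k)\bigr) \cdot \alpha(g, hk) = \alpha(gh, k) \cdot \alpha(g, h),\]
yields $\tilde\psi_\alpha(g)\,\tilde\psi_\alpha(h) = \alpha(g,h)\,\tilde\psi_\alpha(gh)$ as elements of $\mathrm{MGL}(V)$. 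Post-composing with the projection $\mathrm{MGL}(V) \to \mathrm{MPGL}(V) \hookrightarrow \mathrm{MPGL}_\infty(\mathbb{C})$ kills the scalar factor $\alpha(g,h) \in \mathbb{C}^*$ and produces an honest magnetic homomorphism $\psi_\alpha$, with a well defined class $[\psi_\alpha] \in H^1(G, \mathrm{PGL}_\infty(\mathbb{C}^*_\phi))$.

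By the description of $\delta$ recalled just before the Proposition, the class $\delta[\psi_\alpha]$ is computed from any set-theoretic lift of $\psi_\alpha$ to $\mathrm{MGL}_\infty(\mathbb{C})$ by recording its 2-cocycle of failure. The canonical lift $\tilde\psi_\alpha$ recovers precisely the cocycle $\alpha$, so $\delta[\psi_\alpha] = [\alpha]$ and $\delta$ is surjective. The main delicate point is matching the antilinear convention $\mathbb{K}^{\phi(g)}$ on the scalar in the definition of $\tilde\psi_\alpha$ with both the $\phi$-twist in the cocycle condition on $\alpha$ and the semidirect-product law of $\mathrm{MGL}_\infty(\mathbb{C})$; this is the unique normalization that makes the key identity close, and once it is in place the remainder is routine bookkeeping.
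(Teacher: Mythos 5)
Your proof is correct, and it takes a genuinely different route from the paper's. The paper's argument proceeds abstractly: it uses the classification of extensions to produce the magnetic group $\widetilde{G}$ attached to a class in $H^2(G,\mathbb{C}^*_\phi)$, then invokes the Peter--Weyl theorem to get an irreducible representation of the core $\widetilde{G}_0$ on which $\mathbb{C}^*$ acts by scalars, and finally appeals to Wigner's theory of corepresentations to extend/lift it to a magnetic representation of $\widetilde{G}$ whose projectivization is the sought preimage. Your argument instead constructs the preimage directly as a $\phi$-twisted regular representation on $\mathbb{C}[G]$, and the verification is a one-line application of the twisted $2$-cocycle identity; this is more elementary and gives an explicit representative, bypassing Peter--Weyl and the Wigner lifting step entirely. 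The trade-off is that your construction hinges on finiteness of $G$ (the basis $\{e_h\}_{h\in G}$), whereas the paper's existence argument transfers to compact Lie groups; since the paper restricts to magnetic finite groups throughout, this is not a deficiency here. One small point worth stating explicitly if you wrote this out in full: you should record that the twisted regular operators $\tilde\psi_\alpha(g)$ are invertible (they permute the basis up to nonzero scalars), so that $\tilde\psi_\alpha$ genuinely lands in $\mathrm{MGL}(V)$. Also confirm that your formula for the twisted cocycle identity matches the sign/convention of the paper's coboundary $\delta$; it does, since for $n=2$ the paper's $\delta f(g,h,k)={}^{\phi(g)}f(h,k)\,f(gh,k)^{-1}\,f(g,hk)\,f(g,h)^{-1}$ gives exactly your identity when set equal to $1$.
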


\begin{proof}
By the classification of extension of groups we know that the group 
$H^2(G, \mathbb{C}^*_\phi)$ classifies extensions of $G$ by $\mathbb{C}^*$
where $G$ acts on $\mathbb{C}^*$ via $\phi$ by complex conjugation. Therefore an element in $H^2(G, \mathbb{C}^*_\phi)$ defines an extension of groups:
\begin{align} \label{extension of groups tilde G}
    1 \to \mathbb{C}^* \to \widetilde{G} \overset{\pi}{\to} G \to 1
\end{align}
where the magnetic structure is given by the homomorphism $\widetilde{\phi} := \pi \circ \phi$.
The core $\widetilde{G}_0= \mathrm{ker}(\widetilde{\phi}:\widetilde{G} \to \mathbb{Z}/2)$ of the magnetic group $\widetilde{G}$ is a central extension $\mathbb{C}^* \to \widetilde{G}_0 \to G_0$
and by the Peter-Weyl theorem it has an irreducible representation $V$
where $\mathbb{C}^*$ acts by multiplication. According to Wigner \cite{wigner} this representation
can be lifted to a magnetic representation $W$
of $\widetilde{G}$ and therefore it defines an element
in $ \mathrm{Hom}_{Mag}(\widetilde{G},\mathrm{MGL}_\infty(\mathbb{C}))$
whose projectivization defines an element in 
$ \mathrm{Hom}_{Mag}(G,\mathrm{MPGL}_\infty(\mathbb{C}))$.
The image of this element in  $H^1(G, \mathrm{PGL}_\infty(\mathbb{C}^*_\phi))$ is precisely the class of the extension $\widetilde{G}$. This shows the surjectivity
of the connection homomorphism.

\end{proof}
We can see the previous argument at the level of cocycles.
    Recall that
     $ H^*(G, \mathbb{C}^*_\phi)$ is the cohomology of the
 cochain complex:
\begin{align}
     C^n(G, \mathbb{C}^*_\phi) := \mathrm{Maps}(G^n, \mathbb{C}^*)
\end{align}
with boundary map:
\begin{align}
     \delta: C^n(G, \mathbb{C}^*_\phi) &\to  C^{n+1}(G, \mathbb{C}^*_\phi) \\
     \delta f(g_0,...,g_n)&= {}^{\phi(g_0)}f(g_1,...,g_n)\prod_{j=1}^{n} f(g_0, ..., g_{j-1}{} g_{j}, ... g_n)^{(-1)^j}f(g_0, ...,g_{n-1})^{(-1)^{n+1}}.
\end{align}
For a 2-cocycle $\tau \in Z^2(G, \mathbb{C}^*_\phi)$ we have the equation:
\begin{align} \label{2-cocylce eqn}
    {}^{\phi(g)}\tau(h,k) \tau(g, hk) = \tau (gh, k) \tau(g,h)
\end{align}
and we may define the group $\widetilde{G}$ as the set $\mathbb{C}^* \times G$ with product structure given by the formula:
\begin{align}
    (\lambda_0,g_0)(\lambda_1,g_1) := (\tau(g_0,g_1)\lambda_0 \ {}^{\phi(g_0)}\lambda_1 , g_0g_1).
\end{align}
The magnetic group $\widetilde{G}$ is a compact Lie group and
therefore the existence of a representation of $\widetilde{G}$ where $\mathbb{C}^*$ acts by scalar multiplication is ensured by the Peter-Weyl theorem.

A magnetic projective magnetic representation of the group $G$ can be encoded in matrices
$\Lambda_g$ for $g \in G$ such that: 
\begin{align}
    \Lambda_g \ \mathbb{K}^{\phi(g)} \Lambda_h = \tau(g,h) \Lambda_{gh}
\end{align}
where $\tau(g,h) \in \mathbb{C}^*$ keeps track of the discrepancy satisfying Eqn. \eqref{2-cocylce eqn}.
If the cocycle $\tau$ is trivializable, i.e.: 
\begin{align}
    \tau(g,h) = \delta \alpha (g,h) = {}^{\phi(g)}\alpha(h) \alpha(gh)^{-1}\alpha(g),
\end{align}
then the projective magnetic representation can be lifted to a 
magnetic representation of $G$ with assignment $g \mapsto \alpha(g)^{-1}\Lambda_g \mathbb{K}^{\phi(g)}$.

\subsection{Magnetic equivariant graded Brauer group}
Let $(G, \phi)$ be a magnetic group and $G_0 := \mathrm{ker}(\phi)$. We say that a graded algebra $A=A_0 \oplus A_1$ over $\mathbb{C}$ is equivariant with respect to the magnetic group $(G , \phi)$, if there is a homomorphism:
\begin{align}
    \tau : G \to \mathrm{MAut}_{\mathrm{gr}}(A), \ \ \ \ 
\end{align}
to the magnetic group of automorphisms $\mathrm{MAut}_{\mathrm{gr}}(A)$ consisting of $\mathbb{C}$-linear and $\mathbb{C}$-antilinear graded algebra automorphisms of $A$.

 In particular we have that for
$\lambda \in \mathbb{C}$ and $a_1,a_2 \in A$:
\begin{align}
    \tau(g)(\lambda a_1 \cdot a_2) = (\mathbb{K}^{\phi(g)}\lambda) \tau(g)(a_1) \cdot \tau(g)(a_2),
\end{align}
where: 
\begin{align}
    \mathbb{K}^{\phi(g)}\lambda =\begin{cases}
        \mathbb{K} \lambda = \overline{\lambda} & \mathrm{if} \ \phi(g)=1 \\
        \lambda & \mathrm{if} \ \phi(g)=0.
    \end{cases}
\end{align}
In most cases the use of $\tau$ will be suppressed and we will write
 $ga=\tau(g)(a)$.

For $B$ another $(G, \phi)$-equivariant graded algebra $\tau' : G \to \mathrm{MAut}_{\mathrm{gr}}(B)$, the graded tensor product $A \hat{\otimes} B$
is $(G,\phi)$-equivariant with the action $g(a \hat{\otimes} b)=(ga) \hat{\otimes} (gb)$ for $g \in G$. The canonical isomorphisms:
\begin{align}
    (A \hat{\otimes}B) \hat{\otimes}C \to A \hat{\otimes}(B \hat{\otimes}C )
    \ \ \ \mathrm{and} \ \ \ A \hat{\otimes}B \to  B \hat{\otimes}A, \ a\hat{\otimes b} \mapsto (-1)^{\partial a \partial b} b \hat{\otimes}
a\end{align}
are also $(G,\phi)$-equivariant and the map associated to the $(G,\phi)$ action on $A \hat{\otimes} B$ is denoted $\tau \hat{\otimes}\tau' : G \to
\mathrm{MAut}_{\mathrm{gr}} (A \hat{\otimes}B)$. In fact, 
    the category of $(G,\phi)$-equivariant graded algebras together with the graded tensor product $\hat{\otimes}$ and the trivial $(G,\phi)$-equivariant graded algebra $\mathbb{C}$ is a symmetric monoidal category.

We may consider the category of magnetic $(G,\phi)$-equivariant central and simple graded algebras  MECSGA$_{(G,\phi)}(\mathbb{C})$ and we may define an equivalence relation of similarity in this context as follows.
Consider direct sums of magnetic $(G,\phi)$-representations:
\begin{align}
    (V, \rho) = (V_0, \rho_0) \oplus (V_1, \rho_1)
\end{align}
where $\rho_i: G \to \mathrm{MGL}(V_i)$ are homomorphisms to the magnetic
general linear group of the complex vector spaces $V_i$.
Consider the graded algebra of endomorphisms:
\begin{align} \label{End(V0,V1)}
    \mathrm{End}(V_0,V_1)
\end{align}
and endow it with the $(G,\phi)$ action given by conjugation by $\rho$. Namely, $F \in \mathrm{End}(V_0,V_1)$ is mapped to $\rho(g) F \rho(g)^{-1}$
for all $g \in G$.

Two MECSGA$_{(G,\phi)}(\mathbb{C})$s $A$ and $B$ are similar, denoted as $A  \sim B$, if there are magnetic $(G,\phi)$-representations $V_i$,$W_i$ such that there is an isomorphism:
\begin{align} \label{eqn tensor reps}
    A \hat{\otimes} \mathrm{End}(V_0,V_1) \cong B \hat{\otimes} \mathrm{End}(W_0,W_1)
\end{align}
of graded algebras which is moreover $G$-equivariant.

Similarity is an equivalence relation in MECSGA$_{(G,\phi)}(\mathbb{C})$, and for this to hold, one has to check that the canonical isomorphism:
\begin{align} \label{Eqn tensor of algebras}
    \mathrm{End}(V_0,V_1) \hat{\otimes} \mathrm{End}(W_0,W_1) \cong 
    \mathrm{End}((V\hat{\otimes}W)_0,(V\hat{\otimes} W)_1)
\end{align}
is $G$-equivariant.

The equivalence classes of similar MECSGA$_{(G,\phi)}(\mathbb{C})$s
is called the {\it Magnetic $(G,\phi)$-equivariant Brauer group} and will be denoted:
\begin{align}
    \mathrm{GrBr}_{(G,\phi)}(\mathbb{C}).
\end{align}

Note that in the case $G=\mathbb{Z}/2$ and $\phi=\mathrm{id}$, we recover 
the Graded Brauer group $\mathrm{GrBr}_{(\mathbb{Z}/2,\mathrm{id})}(\mathbb{C})$ studied in \S \ref{subsection algebras antilinear}.

The graded tensor product makes the magnetic $(G,\phi)$-equivariant Brauer group $\mathrm{GrBr}_{(G,\phi)}(\mathbb{C})$ into an abelian group.
Let us understand its structure.

\begin{proposition} \label{proposition extensions}
    The  graded magnetic $(G,\phi)$-equivariant Brauer group fits into the non-split short exact sequence:
    \begin{align} \label{short exact sequence GrBR(G)}
     0 \to  \mathrm{GrBr}_{(G,\phi)}(\mathbb{C})' \to \mathrm{GrBr}_{(G,\phi)}(\mathbb{C}) \to \mathrm{GrBr}(\mathbb{C})\cong \mathbb{Z}/2  \to 0
    \end{align}
    where the second map forgets the $(G,\phi)$-equivariant structure of the algebras, and its kernel fits into the non-generally split short exact sequence:
    \begin{align} \label{short exact sequence GrBR(G)'}
   0\to     H^2(G, \mathbb{C}^*_\phi) \to \mathrm{GrBr}_{(G,\phi)}(\mathbb{C})' \to \mathrm{Hom}(G,\mathbb{Z}/2) \to 0
    \end{align}
where $G$ acts by complex conjugation on  the coefficients $\mathbb{C}^*_\phi$ via 
the homomorphism $\phi.$
\end{proposition}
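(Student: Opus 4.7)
The plan is to construct each map in both sequences and verify exactness, postponing the non-splitting assertions to the explicit computations carried out in subsequent examples.

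For the first sequence, the map $\mathrm{GrBr}_{(G,\phi)}(\mathbb{C}) \to \mathrm{GrBr}(\mathbb{C})$ is the forgetful map that remembers only the underlying graded complex algebra. Since this functor respects graded tensor products and similarity, it is a well-defined group homomorphism whose kernel is $\mathrm{GrBr}_{(G,\phi)}(\mathbb{C})'$ by definition. For surjectivity, I equip the complex Clifford algebra $C^1_\mathbb{C} = \mathbb{C}[e]/\langle e^2 = 1 \rangle$ with the trivial action on the generator $e$, namely $g \cdot (\lambda + \mu e) := \mathbb{K}^{\phi(g)}\lambda + \mathbb{K}^{\phi(g)}\mu e$; a direct check confirms this is a $(G,\phi)$-equivariant graded algebra structure whose underlying class generates $\mathrm{GrBr}(\mathbb{C}) \cong \mathbb{Z}/2$.

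For the second sequence, I define the map $\mathrm{GrBr}_{(G,\phi)}(\mathbb{C})' \to \mathrm{Hom}(G,\mathbb{Z}/2)$ as follows. A representative $\mathrm{End}(V_0,V_1)$ carries a magnetic action by graded algebra automorphisms, and each such automorphism arises from conjugation by an element of $\mathrm{GL}(V)$ that either preserves or swaps the decomposition $V = V_0 \oplus V_1$; tracking this dichotomy yields a homomorphism $\psi_A: G \to \mathbb{Z}/2$. This assignment is additive under graded tensor product and invariant under similarity, since the $(G,\phi)$-action on the endomorphism algebra of a graded magnetic representation $W = W_0 \oplus W_1$ preserves the grading. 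Surjectivity is established via the explicit model $V_0 = V_1 = \mathbb{C}$ with magnetic action $\rho(g) := \sigma_x^{\psi(g)}\mathbb{K}^{\phi(g)}$, where $\sigma_x = \left(\begin{smallmatrix} 0 & 1 \\ 1 & 0 \end{smallmatrix}\right)$; since $\sigma_x$ is real and squares to the identity, $\rho$ defines a bona fide magnetic homomorphism whose conjugation action on $M_{1|1}(\mathbb{C})$ realizes the prescribed $\psi$.

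The main obstacle is identifying the kernel of $\psi$ with $H^2(G,\mathbb{C}^*_\phi)$. I will construct $H^2(G,\mathbb{C}^*_\phi) \to \mathrm{GrBr}_{(G,\phi)}(\mathbb{C})'$ by sending $[\tau]$ to $[\mathrm{End}(V_\tau,0)]$, where $V_\tau$ is a magnetic projective $(G,\phi)$-representation with 2-cocycle $\tau$ placed in degree $0$; existence is ensured by Proposition \ref{prop twisted reps = cocycles}, and for two such choices $V_\tau, V'_\tau$ the tensor product $V_\tau \otimes (V'_\tau)^*$ can be trivialized to a genuine magnetic representation, yielding similarity of the endomorphism algebras. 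For exactness, given $\mathrm{End}(V_0,V_1)$ with trivial $\psi$, the grading-preserving $G$-action projectively lifts to an action on $V$ preserving each $V_i$, endowing $V_0$ and $V_1$ with projective magnetic structures sharing a common 2-cocycle class $[\tau] \in H^2(G,\mathbb{C}^*_\phi)$. The graded tensor product $\mathrm{End}(V_0,V_1) \hat{\otimes} \mathrm{End}(V_\tau^*,0) \cong \mathrm{End}((V_0 \oplus V_1) \otimes V_\tau^*)$ then becomes the endomorphism algebra of a genuine graded magnetic representation, since the 2-cocycle trivializes, forcing $[\mathrm{End}(V_0,V_1)] = [\mathrm{End}(V_\tau,0)]$ in $\mathrm{GrBr}_{(G,\phi)}(\mathbb{C})'$. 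The same cancellation argument run in reverse gives injectivity of the map from $H^2$. The non-splittings of both sequences, not forced by exactness alone, will be exhibited through the explicit examples computed in the sequel.
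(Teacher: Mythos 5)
Your proof is correct and follows essentially the same route as the paper: the forgetful map with its Clifford-algebra generator, the degree homomorphism $\psi_A$ extracted from the homogeneity of the conjugating elements (implicitly using the graded Skolem--Noether theorem, which the paper cites explicitly), surjectivity via $M_{1|1}(\mathbb{C})$ with the swap-matrix action, and identification of $\ker\psi$ with $H^2(G,\mathbb{C}^*_\phi)$ through projective magnetic representations guaranteed by Proposition~\ref{prop twisted reps = cocycles}. The only organizational difference is that the paper packages the kernel identification through an intermediate isomorphism with the quotient $H^1(G,\mathrm{PGL}_\infty(\mathbb{C}_\phi))/\mathrm{im}\,H^1(G,\mathrm{GL}_\infty(\mathbb{C}_\phi))$ before invoking Proposition~\ref{prop twisted reps = cocycles}, while you build the map $H^2(G,\mathbb{C}^*_\phi)\to\mathrm{GrBr}_{(G,\phi)}(\mathbb{C})'$ directly and verify injectivity and surjectivity onto $\ker\psi$ by tensor-cancellation; these are equivalent bookkeeping. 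Your deferral of the non-splitting claims to the explicit cocycle computations later in the section also matches the paper's treatment, which verifies non-splitting only in the $(\mathbb{Z}/2,\mathrm{id})$ case within this proof and relies on the subsequent identification of $\beta(1,1)=\sigma(\phi)$ for the general statement.
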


\begin{proof} A very important case when the exact sequences are not split is when the magnetic group
is $(\mathbb{Z}/2, \mathrm{id})$. In this case we know that $\mathrm{GrBr}_{(\mathbb{Z}/2, \mathrm{id})}(\mathbb{C})= \mathbb{Z}/8$
and 
\begin{align}
    H^2(\mathbb{Z}/2, \mathbb{C}^*_{\mathrm{id}}) = \mathbb{Z}/2, \ \ \mathrm{Hom}(\mathbb{Z}/2, \mathbb{Z}/2)= \mathbb{Z}/2, \ \ \ 
    \mathrm{GrBr}(\mathbb{C}) = \mathbb{Z}/2.
\end{align}
Therefore $\mathrm{GrBr}_{(\mathbb{Z}/2, \mathrm{id})}(\mathbb{C})'= \mathbb{Z}/4$ and none of the sequences split.

Now consider the natural homomorphism
of magnetic groups $(G,\phi) \to (\mathbb{Z}/2, \mathrm{id})$ that $\phi$ defines, take the induced homomorphism, and note
that it is compatible with the forgetful homomorphism making the following diagram commutative:
\begin{align}
    \xymatrix{
    \mathrm{GrBr}_{(G,\phi)}(\mathbb{C})' \ar[r] & \mathrm{GrBr}_{(G,\phi)}(\mathbb{C}) \ar[r] & \mathrm{GrBr}(\mathbb{C}) \cong \mathbb{Z}/2\\
   \mathrm{GrBr}_{(\mathbb{Z}/2,\mathrm{id})}(\mathbb{C})' \cong \mathbb{Z}/4 \ar[r] \ar[u]&   \mathrm{GrBr}_{(\mathbb{Z}/2,\mathrm{id})}(\mathbb{C}) \cong \mathbb{Z}/8.  \ar[u] \ar@{->>}[ur] &
    }
\end{align}

Therefore the forgetful homomorphism is surjective
and the algebra $\mathbb{C}[e]/\langle e^2=1 \rangle$ with $G$ action defined by the automorphism $\mathrm{Ad}_{\mathbb{K}^{\phi(g)}}$
belongs to $\mathrm{GrBr}_{(G,\phi)}(\mathbb{C})$
and maps to the generator in $\mathrm{GrBr}(\mathbb{C})$.

Consider now the kernel 
$\mathrm{GrBr}_{(G,\phi)}(\mathbb{C})'$
of the forgetful map  and take an algebra
$A$ in this subgroup. The algebra $A$ is isomorphic to a graded matrix algebra $M_{k|l}(\mathbb{C})$ for $k,l \geq 0$, and $G$ acts by graded automorphisms.
By the Skolem-N\"other Theorem \cite[Thm. 2.7.2]{Gille_Central_Simple_algebras} the group $G$ acts by inner automorphisms in $M_{k|l}(\mathbb{C})$, and therefore for every $g$
in $G$ there are matrices $M_g$ 
such that the $G$ action is given by the inner automorphisms:
\begin{align}
    G \to \mathrm{MAut}_{\mathrm{gr}}(M_{k|l}(\mathbb{C})), \ \ \ g \mapsto \mathrm{Ad}_{M_g \mathbb{K}^{\phi(g)}}.
\end{align}
Since the graded automorphism acts in the degree zero part of the graded algebra $M_{k|l}(\mathbb{C})_0$,
then the matrices $M_g$ must be homogeneous.
Therefore the matrices $M_g$ have a degree $\partial (M_g)$,
where $\partial (M_g)=0$ if $M_g$ is of the form $\left(\begin{smallmatrix}
            * & 0 \\ 0 & *
        \end{smallmatrix}\right)$ and 
        $\partial (M_g)=1$ if $M_g$ is of the form $\left(\begin{smallmatrix}
            0 & * \\ * & 0
        \end{smallmatrix}\right)$. This degree induces 
an assignment:
\begin{align}
    \psi_A:  G \to \mathbb{Z}/2, \ \ g \mapsto \partial(M_g)
\end{align}    
that becomes a homomorphism of groups because $M_g \mathbb{K}^{\phi(g)}M_h$ and 
$M_{gh}$ only differ by a scalar. 

Note that the endomorphism algebra $\mathrm{End}(V_0,V_1)$, for  $V_0$ and $V_1$ magnetic $(G, \phi)$-representations, has a canonical $G$-action given by conjugation. Since the action of $G$ on $V_0$ and $V_1$ is of degree zero, then the associated degree to each element $g \in G$ is zero. Hence we have that the associated homomorphism $\psi_{\mathrm{End}(V_0,V_1)}: G \to \mathbb{Z}/2$ is trivial.
Therefore we have an isomorphism:
\begin{align}
    \psi_A = \psi_{\mathrm{End}(V_0,V_1)},
\end{align}
which implies that the similarity class of $A$
can be mapped to the homomorphism $\psi_A \in \mathrm{Hom}(G, \mathbb{Z}/2)$, and the graded tensor product structure in
$\mathrm{GrBr}_{(G,\phi)}(\mathbb{C})'$ induces the desired homomorphism:
\begin{align} \label{homomorphisms_GrBr'-to-Hom(G,Z2)}
    \mathrm{GrBr}_{(G,\phi)}(\mathbb{C})' \to \mathrm{Hom}(G, \mathbb{Z}/2), \ \  A \mapsto \psi_A.
\end{align}

This homomorphism is surjective because of the following construction. Take any homomorphism $\psi \in \mathrm{Hom}(G, \mathbb{Z}/2)$ and take the algebra $A= M_{1|1}(\mathbb{C})$
where the group $G$ acts by graded automorphisms in the following form:
\begin{align} \label{Algebra for a homomorphism psi}
    \tau_\psi(g) =
    \begin{cases}
        \mathrm{Ad}_{\mathbb{K}^{\phi(g)}} & \mathrm{if}\ \   \psi(g)=0 \\
        \mathrm{Ad}_{\left(\begin{smallmatrix}
            0 & 1 \\ 1 & 0
        \end{smallmatrix}\right)   \mathbb{K}^{\phi(g)}} & \mathrm{if} \  \ \psi(g)=1 .
    \end{cases}
\end{align}

We have that the similarity class of  
$A$ with $G$-action defined by $\tau_\psi$
defines an element in $\mathrm{GrBr}_{(G,\phi)}(\mathbb{C})'$,
and by construction $\psi_A=\psi$. Therefore the assignment 
of Eqn. \eqref{homomorphisms_GrBr'-to-Hom(G,Z2)} is a surjective homomorphism of groups.

Now let us assume that $\psi_A$ is the trivial homomorphism for $A = \mathrm{End}(V)$ with $V=V_0\oplus V_1$. Then $A \in \mathrm{GrBr}_{(G,\phi)}(\mathbb{C})''$ where we define:
\begin{align}
    \mathrm{GrBr}_{(G,\phi)}(\mathbb{C})'' = \mathrm{ker} \left( \mathrm{GrBr}_{(G,\phi)}(\mathbb{C})' \to \mathrm{Hom}(G, \mathbb{Z}/2) \right).
\end{align}

This implies that the endomorphisms $M_g$ are all of degree zero,  
and therefore this action of $G$ on $A$ defines a magnetic projective representation on $V_0 \oplus V_1$. Note that $\mathrm{End}(V_0,V_1)$ and $\mathrm{End}(V_0\oplus V_1,0)$ are similar because we can tensor each algebra by the trivial representation $\mathrm{End}(\mathbb{C},\mathbb{C})$, so in $\mathrm{GrBr}_{(G,\phi)}(\mathbb{C})''$ we can forget about the graduation of the vector spaces. Hence we can construct a map:
\begin{align}
      \mathrm{GrBr}_{(G,\phi)}(\mathbb{C})'' &\to \tfrac{H^1(G, \mathrm{PGL}_\infty(\mathbb{C}^*_\phi))}
    {{\mathrm{image}}(H^1(G, \mathrm{GL}_\infty(\mathbb{C}^*_\phi)))} \\
    \mathrm{End}(V) & \mapsto [ g \mapsto M_g]
\end{align}
which is well-defined because of Eqn. \eqref{eqn tensor reps}, it is an injective homomorphism of groups because of Eqn. \ref{Eqn tensor of algebras}, and  it is surjective because for any magnetic projective representation $W$, the $G$ action on the algebra $\mathrm{End}(W)$ permits to recover $W$.

In view of Prop. \ref{prop twisted reps = cocycles} we therefore have that: 
\begin{align} \label{GrBr'' = H2}
      \mathrm{GrBr}_{(G,\phi)}(\mathbb{C})'' &\cong H^2(G,\mathbb{C}^*_\phi)
\end{align}
and we obtain the short exact sequence of Eqn. \eqref{short exact sequence GrBR(G)'}:
\begin{align} 
   0\to     H^2(G, \mathbb{C}^*_\phi) \to \mathrm{GrBr}_{(G,\phi)}(\mathbb{C})' \to \mathrm{Hom}(G,\mathbb{Z}/2) \to 0.
    \end{align}
 Here an element $ \lambda \in  H^2(G, \mathbb{C}^*_\phi)$ defines an extension of magnetic groups $\widetilde{G}_\lambda$ as in Eqn. \eqref{extension of groups tilde G}. If $V_\lambda$ is a magnetic representation of $\widetilde{G}_\lambda$ where 
 $\mathbb{C}^* = \mathrm{ker}(\widetilde{G} \to G)$ acts by multiplication of scalars and $\widetilde{g} \mapsto M_{\widetilde{g}} \in \mathrm{End}(V_\lambda)$,
 then the element $\lambda$ in
 $ H^2(G, \mathbb{C}^*_\phi)$ is mapped to the algebra $\mathrm{End}(V_\lambda)$ 
 with automorphism: 
 \begin{align} \label{inner automorpshism twisted group}
 \tau_\lambda(g)=\mathrm{Ad}_{M_{\widetilde{g}} \mathbb{K}^{\phi(g)}}
 \end{align}
 where $g \in G$ and $\widetilde{g}$ is any lift of $g$ in $\widetilde{G}_\lambda$. Therefore we have the assignment: 
 \begin{align} \label{H2 to GrBr''}
     H^2(G, \mathbb{C}^*_\phi) &\overset{\cong}{\to } \mathrm{GrBr}_{(G,\phi)}(\mathbb{C})'' \\
     \lambda &\mapsto (\mathrm{End}(V_\lambda), \tau_\lambda) \nonumber
 \end{align}
 which realizes the isomorphism since both algebras
 $\mathrm{End}(V_{\lambda_0}) \hat{\otimes} \mathrm{End}(V_{\lambda_0}) $  and 
 $\mathrm{End}(V_{\lambda_0\lambda_1}) $ are similar.
\end{proof}

The 2-cocycle associated to the extension of Eqn. \eqref{short exact sequence GrBR(G)'} is given by a set theoretical map:
\begin{align}
   \iota \circ \rho : \mathrm{Hom}(G,\mathbb{Z}/2) \times \mathrm{Hom}(G,\mathbb{Z}/2) \to 
    H^2(G, \mathbb{C}^*_\phi)
\end{align}
which is defined by the composition of the 2-cocycle:
\begin{align}
 \rho:  \left( \mathrm{Hom}(G,\mathbb{Z}/2) \times \mathrm{Hom}(G,\mathbb{Z}/2) \right){\to} H^2(G, \mathbb{Z}/2) 
\end{align}
with the homomorphism:
\begin{align}
   \iota:  H^2(G, \mathbb{Z}/2) \to
    H^2(G, \mathbb{C}^*_\phi)
\end{align}
induced by the 
 homomorphism in coefficients $\mathbb{Z}/2 \to \mathbb{C}^*_\phi$.

\begin{proposition} \label{proposition 2-cocycle rho}
The 2-cocycle:  
\begin{align}
    \rho \in Z^2\left(\mathrm{Hom}(G,\mathbb{Z}/2), H^2(G, \mathbb{Z}/2)\right)
\end{align}
of the extension of Eqn. \eqref{short exact sequence GrBR(G)'} is defined by the formula:
\begin{align}\label{formula for cocycle rho}
  \rho ( \psi_0 , \psi_1 ) (g,h) : = (-1)^{\psi_0(g) \psi_0(h)+\psi_0(g) \psi_1(h)+\psi_1(g) \psi_1(h)} 
\end{align}
where $\psi_0,\psi_1 \in \mathrm{Hom}(G,\mathbb{Z}/2)$, $g,h \in G$,
and 
the $\mathbb{Z}/2$ in $\mathrm{Hom}(G,\mathbb{Z}/2)$ is taken with the additive group structure of $\{0,1\}$, while the $\mathbb{Z}/2$  in $H^2(G, \mathbb{Z}/2)$ is taken with the multiplicative group structure of $\{1,-1\}$.
\end{proposition}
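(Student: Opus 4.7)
The plan is to compute $\rho(\psi_0,\psi_1)$ directly from its definition as the 2-cocycle of the extension \eqref{short exact sequence GrBR(G)'} associated with the set-theoretic section $\psi\mapsto[A_\psi]$ given by Eqn.~\eqref{Algebra for a homomorphism psi}. By construction,
$$\rho(\psi_0,\psi_1)=[A_{\psi_0}]\cdot[A_{\psi_1}]\cdot[A_{\psi_0+\psi_1}]^{-1}\in H^2(G,\mathbb{C}^*_\phi),$$
and this class is identified with a projective magnetic 2-cocycle via the isomorphism of Eqn.~\eqref{H2 to GrBr''}. The strategy is to find an explicit inner element $M_g$ lifting the tensor-product $G$-action on $A_{\psi_0}\hat{\otimes}A_{\psi_1}$, and then to measure its failure to be a strict magnetic homomorphism by computing $M_g\mathbb{K}^{\phi(g)}\cdot M_h\mathbb{K}^{\phi(h)}$ modulo $M_{gh}\mathbb{K}^{\phi(gh)}$.

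First I would identify the inner element $M_g\in(A_{\psi_0}\hat{\otimes}A_{\psi_1})^{\times}$ realizing the tensor-product action via $\mathrm{Ad}_{M_g\mathbb{K}^{\phi(g)}}$. The naive candidate $N_g^{(0)}\hat{\otimes}N_g^{(1)}$, with $N_g^{(i)}=\left(\begin{smallmatrix}0&1\\1&0\end{smallmatrix}\right)^{\psi_i(g)}$, does not realize the action on the nose because the graded adjoint action on a graded tensor product of algebras differs from the tensor product of adjoint actions by a Koszul sign depending on the degrees of the operands; I would correct this by multiplying by suitable parity operators such as $E_0=\mathrm{diag}(1,-1)\hat{\otimes}I$ and $E_1=I\hat{\otimes}\mathrm{diag}(1,-1)$, whose adjoint actions absorb the extra signs. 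Since $N_g^{(i)}N_h^{(i)}=N_{gh}^{(i)}$ holds exactly and the representative $A_{\psi_0+\psi_1}$ has trivial projective cocycle (its canonical lift $X^{(\psi_0+\psi_1)(g)}\mathbb{K}^{\phi(g)}$ being a genuine magnetic representation), all the contribution to $\rho(\psi_0,\psi_1)(g,h)$ comes from three sources of graded signs: the graded multiplication rule $(x\hat{\otimes}y)(x'\hat{\otimes}y')=(-1)^{|x'||y|}(xx'\hat{\otimes}yy')$ applied to the product $(N_g^{(0)}\hat{\otimes}N_g^{(1)})(N_h^{(0)}\hat{\otimes}N_h^{(1)})$; the anticommutation relations $E_0N_g^{(0)}=(-1)^{\psi_0(g)}N_g^{(0)}E_0$ and $E_1N_g^{(1)}=(-1)^{\psi_1(g)}N_g^{(1)}E_1$ used to push the $E_i$-factors across the matrix factors; and the squaring relations $E_i^2=1$ together with $\psi_i(g)+\psi_i(h)\equiv\psi_i(gh)\pmod 2$ that reduce the $E_i$-exponents. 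Combining these contributions yields the three cup-product-like terms $\psi_0(g)\psi_0(h)$, $\psi_0(g)\psi_1(h)$, and $\psi_1(g)\psi_1(h)$ in the exponent of the claimed formula.

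The main obstacle is the disciplined Koszul-sign bookkeeping: slight variations in the ordering of correction factors yield cohomologous but distinct $\mathbb{Z}/2$-valued representatives of $\rho$, so the specific ordering producing Eqn.~\eqref{formula for cocycle rho} must be fixed at the outset and maintained throughout. A useful sanity check is the case $G=\mathbb{Z}/2$, $\phi=\mathrm{id}$, $\psi_0=\psi_1=\mathrm{id}$, where the formula evaluates to $-1$ on the nontrivial pair, in agreement with the nontriviality of the extension $\mathrm{GrBr}_{(\mathbb{Z}/2,\mathrm{id})}(\mathbb{C})'\cong\mathbb{Z}/4$ identified in the proof of Prop.~\ref{proposition extensions}.
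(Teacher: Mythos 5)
Your strategy---working directly in the graded tensor product $A_{\psi_0}\hat{\otimes}A_{\psi_1}$, choosing inner elements $M_g$ corrected by parity operators $E_0,E_1$---is genuinely different from the paper's. The paper instead uses the Clifford presentation of $\sigma(\psi)$ (Eqn.~\eqref{Algebra for a homomorphism psi2}) so that $\sigma(\psi_0)\hat{\otimes}\sigma(\psi_1)\hat{\otimes}\sigma(\psi_{01})^{-1}$ becomes a single algebra $C^{4,2}_{\mathbb{C}}$ whose Koszul signs are Clifford anticommutation relations for the seven generators $e_1,\dots,e_6$; the conjugating element is then simply the ordinary product $e_1^{\psi_0(g)}e_3^{\psi_1(g)}e_5^{\psi_{01}(g)}$ and the cocycle comes from moving these factors past one another and using $e_1^2=e_3^2=1$, $e_5^2=-1$.

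There is a concrete gap in your plan, and it sits exactly where you wave your hands. First, you discard the third tensor factor $\sigma(\psi_{01})^{-1}$ on the grounds that it has a genuine magnetic lift. But the same is true of $\sigma(\psi_0)$ and $\sigma(\psi_1)$ individually, so that cannot be the reason the factor is irrelevant; the sign contributions that factor contributes through the graded tensor product are precisely analogous to the ones you say you will track for the first two factors. In the paper's Clifford picture, the third factor is responsible for the relation $e_5^2=-1$, which is not of the form $E_i^2=1$ that your sketch assumes. Second, your concluding claim that ``combining these contributions yields the three cup-product-like terms $\psi_0(g)\psi_0(h)$, $\psi_0(g)\psi_1(h)$, $\psi_1(g)\psi_1(h)$'' is not substantiated. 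If one carries out the Koszul bookkeeping exactly as you set it up---namely with $E_i^2=1$, $N_g^{(i)}N_h^{(i)}=N_{gh}^{(i)}$, and only the two factors $A_{\psi_0}\hat{\otimes}A_{\psi_1}$---the signs from $(x\hat{\otimes}y)(x'\hat{\otimes}y')$ and from pushing the $E_i$ past the $N^{(i)}$ do not assemble to give the squared terms: they give a single term $(-1)^{\psi_0(g)\psi_1(h)}$. The extra classes $\psi_0\cup\psi_0$ and $\psi_1\cup\psi_1$ differ from zero in $H^2(G,\mathbb{Z}/2)$ in general, so this is not merely a cosmetic rearrangement. You would need to either (a) explain, with a specific sign count, where the two additional terms arise in your double-tensor picture, or (b) include the $\sigma(\psi_{01})^{-1}$ factor and account for the fact that its degree-one generator squares to $-1$, which is the source of the quadratic correction $(-1)^{\psi_{01}(g)\psi_{01}(h)}$ linking the one-term expression to the three-term expression. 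Your sanity check for $G=\mathbb{Z}/2$ does not distinguish the two candidates, since for $\psi_0=\psi_1$ they coincide.
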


\begin{proof}
This explicit choice of 2-cocycle follows from the construction done in  Eqn. \eqref{homomorphisms_GrBr'-to-Hom(G,Z2)} of a prescribed  MECSGA$_{(G,\phi)}(\mathbb{C})$
for each given homomorphisms $\psi \in \mathrm{Hom}(G, \mathbb{Z}/2)$. 
Let us elaborate.

Define the set theoretical section of the homomorphism of Eqn. \eqref{homomorphisms_GrBr'-to-Hom(G,Z2)}
by the assignment:
\begin{align}
    \sigma : \mathrm{Hom}(G,\mathbb{Z}/2) &\to \mathrm{GrBr}_{(G,\phi)}(\mathbb{C})'\\
    \psi &\mapsto (C^{2,0}_\mathbb{R}\underset{\mathbb{R}}{\otimes} \mathbb{C},\tau_\psi)
    \label{sigma of psi}
\end{align}
where $C^{2,0}_\mathbb{R}$ is the real Clifford algebra and the $G$ action is defined by the equation:
\begin{align} \label{Algebra for a homomorphism psi2} 
    \tau_\psi(g) =
        \mathrm{Ad}_{{e_1}^{\psi(g)}   \mathbb{K}^{\phi(g)}}
\end{align}
with $e_1^2=1$ and the adjoint action conjugates by $e_1^{\psi(g)}$. Here we are using the 
results presented in Eqn. 
\eqref{C{2,0} to complex} of the complexification of the real Clifford algebra $C^{2,0}_\mathbb{R}$.

Now take two homomorphism $\psi_0, \psi_1 \in \mathrm{Hom}(G, \mathbb{Z}/2)$ and denote their product by  $\psi_{01}:=\psi_0+\psi_1$.
The failure of $\sigma$ to be a homomorphism 
will provide us with the 2-cocycle condition that we are looking for.
Consider the product of MECSGA$_{(G,\phi)}(\mathbb{C})$'s:
\begin{align} \label{cocycle in rho}
    \sigma(\psi_0) \hat{\otimes} \sigma(\psi_1) \hat{\otimes} \sigma(\psi_{01})^{-1} 
\end{align}
and note that this product lies in 
$\mathrm{GrBr}_{(G,\phi)}(\mathbb{C})''$. 
The projective representation that the $G$-action defines will return the desired cocycle.

The inverse of the algebra $\sigma(\psi_{01})$ appearing in Eqn. \eqref{cocycle in rho} 
is given by the opposite algebra of the complexification of the Clifford algebra $C^{2,0}_\mathbb{R}$. Therefore we may take:
\begin{align}
    \sigma(\psi_{01})^{-1} = ( (C^{0,2}_\mathbb{R}\underset{\mathbb{R}}{\otimes} \mathbb{C},\tau^{\mathrm{op}}_{\psi_{01}})
\end{align}
where $\tau^{\mathrm{op}}_{\psi_{01}}(g)=\mathrm{Ad}_{e_5 \mathbb{K}^{\phi(g)}}$
whenever $\psi_{01}(g)=1$ and $e_5^2=-1$.

Fixing notation we will have the equations:
\begin{align}
    C^{4,2}_\mathbb{R} \cong C^{2,0}_\mathbb{R} \hat{\otimes}
    C^{2,0}_\mathbb{R} \hat{\otimes}
    C^{0,2}_\mathbb{R}, \ \ \ \ e_1^2=e_2^2=e_3^2=e_4^2=1, \ \  \ \ e_5^2=e_6^2=-1
\end{align}
with $\tau_{\psi_0}(g) = \mathrm{Ad}_{e_1 \mathbb{K}^{\phi(g)}}$ whenever $\psi_{0}(g)=1$, and $\tau_{\psi_1}(g)=\mathrm{Ad}_{e_3 \mathbb{K}^{\phi(g)}}$
whenever $\psi_{1}(g)=1$.

The algebra of Eqn. \eqref{cocycle in rho}  is therefore:
\begin{align}
\sigma(\psi_0) \hat{\otimes}\sigma(\psi_1) \hat{\otimes}\sigma(\psi_{01})^{-1} =  & 
(C^{2,0}_\mathbb{R}\underset{\mathbb{R}}{\otimes} \mathbb{C},\tau_{\psi_{1}})
\hat{\otimes}
(C^{2,0}_\mathbb{R}\underset{\mathbb{R}}{\otimes} \mathbb{C},\tau_{\psi_{1}})
\hat{\otimes}
(C^{0,2}_\mathbb{R}\underset{\mathbb{R}}{\otimes} \mathbb{C},\tau^{\mathrm{op}}_{\psi_{01}})\\
 \ \cong &   \left(C^{4,2}_\mathbb{R} \underset{\mathbb{R}}{\otimes} \mathbb{C}, \tau_{\psi_0} \hat{\otimes} \tau_{\psi_1} \hat{\otimes} \tau^{\mathrm{op}}_{\psi_{01}}\right).
\end{align}

Denoting:
\begin{align}
    \tau : = \tau_{\psi_0} \hat{\otimes} \tau_{\psi_1} \hat{\otimes} \tau^{\mathrm{op}}_{\psi_{01}}
\end{align}
we see that: 
\begin{align}
    \tau(g) = \mathrm{Ad}_{e_1^{\psi_0(g)}e_3^{\psi_1(g)}e_5^{\psi_{01}(g)} \mathbb{K}^{\phi(g)}}
\end{align}
and therefore 
the 2-cocycle $\rho(\psi_0,\psi_1)$ is defined by the equation:
\begin{align}
    \left( e_1^{\psi_0(g)}e_3^{\psi_1(g)}e_5^{\psi_{01}(g)} \right) \left( e_1^{\psi_0(h)}e_3^{\psi_1(h)}e_5^{\psi_{01}(h)} \right) = \sigma(\psi_0,\psi_1)(g,h)\  \left( e_1^{\psi_0(gh)}e_3^{\psi_1(gh)}e_5^{\psi_{01}(gh)} \right).
\end{align}
Using the anticommutation relation of the $e_j$'s we obtain the following equations:
\begin{align}
    \left( e_1^{\psi_0(g)}e_3^{\psi_1(g)}e_5^{\psi_{01}(g)} \right) & \left( e_1^{\psi_0(h)}e_3^{\psi_1(h)}e_5^{\psi_{01}(h)} \right)  \\
 & =   (-1)^{\psi_0(h)\psi_{01}(g)+\psi_0(h)\psi_{1}(g)} \left( e_1^{\psi_0(g)} e_1^{\psi_0(h)}e_3^{\psi_1(g)}e_5^{\psi_{01}(g)} e_3^{\psi_1(h)}e_5^{\psi_{01}(h)} \right)\\
 & =   (-1)^{\psi_0(h)\psi_{0}(g)+\psi_1(h)\psi_{01}(g)} \left(e_1^{\psi_0(gh)} e_3^{\psi_1(g)}e_3^{\psi_1(h)}e_5^{\psi_{01}(g)} e_5^{\psi_{01}(h)} \right) \\
 & =   (-1)^{\psi_0(g)\psi_{0}(h)+\psi_0(g)\psi_{1}(h)+\psi_1(g)\psi_{1}(h)} \left(e_1^{\psi_0(gh)} e_3^{\psi_1(gh)}e_5^{\psi_{01}(gh)} \right),
\end{align}
thus showing that the 2-cocycle is indeed: 
\begin{align}
    \rho(\psi_0,\psi_1)(g,h) = (-1)^{\psi_0(g)\psi_{0}(h)+\psi_0(g)\psi_{1}(h)+\psi_1(g)\psi_{1}(h)}.
\end{align}

\end{proof}

Define the group: 
\begin{align}
    H^2(G, \mathbb{C}^*_\phi) \overset{\bullet}{\times}_\rho \mathrm{Hom}(G,\mathbb{Z}/2)
\end{align}
as the set $ H^2(G, \mathbb{C}^*_\phi) {\times} \mathrm{Hom}(G,\mathbb{Z}/2)$
with product structure:
\begin{align}
(\lambda_0, \psi_0)(\lambda_1, \psi_1) := ((-1)^{\psi_0 \cup \psi_1}\lambda_0\lambda_1 , \psi_0+\psi_1)
\end{align}
where $\psi_0 \cup \psi_1 \in H^2(G, \mathbb{Z}/2)$ is the cup product of $\psi_0$ with $\psi_1$ defined by the formula:
\begin{align}
   ( \psi_0 \cup \psi_1 ) (g,h) = \psi_0(g)\psi_1(h)
\end{align}
for $g,h \in G$. Then we see that the product: 
\begin{align}
    (\lambda_0, \psi_0)(\lambda_1, \psi_1)(\lambda_2, \psi_0+\psi_1) &=  ((-1)^{\psi_0 \cup \psi_1}\lambda_0\lambda_1 , \psi_0+\psi_1)(\lambda_2 , \psi_0+\psi_1) \\
    &=((-1)^{\psi_0\cup \psi_0 + \psi_0 \cup \psi_1 +\psi_1 \cup \psi_1}\lambda_0 \lambda_1\lambda_2, 0)
\end{align}
encodes the cohomology class of the cocycle $\rho(\psi_0,\psi_1) $ of Eqn. \eqref{formula for cocycle rho}, i.e.
\begin{align}
  [\rho(\psi_0,\psi_1)]=  (-1)^{\psi_0\cup \psi_0 + \psi_0 \cup \psi_1 +\psi_1 \cup \psi_1} \in H^2(G, \mathbb{C}^*_\phi).
\end{align}

\begin{corollary} \label{corollary GrBr'}
The assignment:
\begin{align}
   \Big( H^2(G, \mathbb{C}^*_\phi) \overset{\bullet}{\times}_\rho \mathrm{Hom}(G,\mathbb{Z}/2) \Big) &\overset{\cong}{\to} \mathrm{GrBr}_{(G,\phi)}(\mathbb{C})'\\
    (\lambda,\psi) & \mapsto \mathrm{End}(V_\lambda) \hat{\otimes} \sigma(\psi)
\end{align}
is an isomorphism of groups.
\end{corollary}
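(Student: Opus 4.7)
The plan is to combine the two structural results already established, namely Proposition \ref{proposition extensions} and Proposition \ref{proposition 2-cocycle rho}. The first gives a set-theoretic section $\sigma$ of the surjection $\mathrm{GrBr}_{(G,\phi)}(\mathbb{C})' \to \mathrm{Hom}(G, \mathbb{Z}/2)$ whose kernel is identified with $H^2(G, \mathbb{C}^*_\phi)$; the second computes the 2-cocycle $\rho$ measuring the failure of $\sigma$ to be a homomorphism. The corollary just repackages this data as a twisted product.

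First I would verify that the assignment $(\lambda, \psi) \mapsto [\mathrm{End}(V_\lambda) \hat{\otimes} \sigma(\psi)]$ is a well-defined bijection. Well-definedness comes from the isomorphism \eqref{H2 to GrBr''} together with the construction \eqref{sigma of psi} of $\sigma$. Bijectivity is a direct consequence of the short exact sequence \eqref{short exact sequence GrBR(G)'}: the class of $\mathrm{End}(V_\lambda) \hat{\otimes} \sigma(\psi)$ projects to $\psi$, and two classes with the same $\psi$-projection differ by a unique element of $\mathrm{GrBr}_{(G,\phi)}(\mathbb{C})'' \cong H^2(G, \mathbb{C}^*_\phi)$.

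For the group law I would compute the graded tensor product of two representatives. Using that the category of $(G,\phi)$-equivariant graded algebras is symmetric monoidal, together with $\mathrm{End}(V_{\lambda_0}) \hat{\otimes} \mathrm{End}(V_{\lambda_1}) \cong \mathrm{End}(V_{\lambda_0 \lambda_1})$ (a consequence of the isomorphism \eqref{H2 to GrBr''}), and the key identity implicitly established in the proof of Proposition \ref{proposition 2-cocycle rho},
\[
\sigma(\psi_0) \hat{\otimes} \sigma(\psi_1) \sim \mathrm{End}(V_{[\rho(\psi_0,\psi_1)]}) \hat{\otimes} \sigma(\psi_0 + \psi_1),
\]
the product becomes $[\mathrm{End}(V_{\lambda_0 \lambda_1 \cdot [\rho(\psi_0,\psi_1)]}) \hat{\otimes} \sigma(\psi_0 + \psi_1)]$. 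Matching this with the image of $(\lambda_0, \psi_0)(\lambda_1, \psi_1) = ((-1)^{\psi_0 \cup \psi_1}\lambda_0\lambda_1, \psi_0 + \psi_1)$ reduces to the cohomological identity stated just before the corollary, $[\rho(\psi_0, \psi_1)] = (-1)^{\psi_0 \cup \psi_0 + \psi_0 \cup \psi_1 + \psi_1 \cup \psi_1}$, being cohomologous to $(-1)^{\psi_0 \cup \psi_1}$ as 2-cocycles on $\mathrm{Hom}(G,\mathbb{Z}/2)$ valued in $H^2(G, \mathbb{C}^*_\phi)$.

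The main obstacle is this final cohomological reconciliation. One must exhibit the ratio $(-1)^{\psi_0 \cup \psi_0 + \psi_1 \cup \psi_1}$ as a coboundary in $C^2(\mathrm{Hom}(G,\mathbb{Z}/2), H^2(G, \mathbb{C}^*_\phi))$, or equivalently, show that $\psi \cup \psi$ dies in $H^2(G, \mathbb{C}^*_\phi)$. The natural route is the Bockstein argument attached to the factorization $\mathbb{Z}/2 \hookrightarrow \mathbb{Z}/4 \hookrightarrow \mathbb{C}^*_\phi$, where $\mathbb{Z}/4$ carries the $\phi$-twisted action by inversion so that the embedding $1 \mapsto i$ is $G$-equivariant: this factors $\psi \cup \psi$ through $H^2(G, \mathbb{Z}/4_\phi)$, where it vanishes by exactness of the associated Bockstein sequence. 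With this in hand, the extraneous factor is absorbed into the choice of section $\sigma$ and the isomorphism takes the clean form stated in the corollary.
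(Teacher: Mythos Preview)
Your overall strategy matches the paper's: both invoke Propositions~\ref{proposition extensions} and~\ref{proposition 2-cocycle rho} and then check that the abstract group $H^2(G,\mathbb{C}^*_\phi)\overset{\bullet}{\times}\mathrm{Hom}(G,\mathbb{Z}/2)$ realises the extension with the cocycle $\rho$ computed there. The divergence is entirely in the step you single out as the main obstacle.

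Your proposed resolution of that step is wrong. You claim that $\psi\cup\psi$ dies in $H^2(G,\mathbb{C}^*_\phi)$ via a Bockstein argument through $\mathbb{Z}/4_\phi$. This already fails for $(G,\phi)=(\mathbb{Z}/2,\mathrm{id})$: there $\phi\cup\phi$ is the nontrivial element of $H^2(\mathbb{Z}/2,\mathbb{C}^*_{\mathrm{id}})\cong\mathbb{Z}/2$ (a coboundary would require $|\alpha(1)|^2=-1$ for some $\alpha(1)\in\mathbb{C}^*$), and it is precisely this nonvanishing that makes $\mathrm{GrBr}_{(\mathbb{Z}/2,\mathrm{id})}(\mathbb{C})'\cong\mathbb{Z}/4$ rather than $\mathbb{Z}/2\times\mathbb{Z}/2$, as recorded at the start of the proof of Proposition~\ref{proposition extensions}. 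The error in the Bockstein argument is that the identity $\mathrm{Sq}^1\psi=\psi\cup\psi$ belongs to the \emph{untwisted} sequence $\mathbb{Z}/2\hookrightarrow\mathbb{Z}/4\twoheadrightarrow\mathbb{Z}/2$; once you give $\mathbb{Z}/4$ the $\phi$-twisted action (needed to make $1\mapsto i$ equivariant into $\mathbb{C}^*_\phi$), the connecting map of the resulting sequence is no longer $\mathrm{Sq}^1$, and the conclusion does not follow.

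The paper does not attempt to trivialise $\psi\cup\psi$ at all. Instead, in the paragraph immediately preceding the corollary it computes the triple product $(1,\psi_0)(1,\psi_1)(1,\psi_0+\psi_1)$ directly inside the abstract group $H^2(G,\mathbb{C}^*_\phi)\overset{\bullet}{\times}\mathrm{Hom}(G,\mathbb{Z}/2)$ and observes that its first coordinate is exactly $[\rho(\psi_0,\psi_1)]=(-1)^{\psi_0\cup\psi_0+\psi_0\cup\psi_1+\psi_1\cup\psi_1}$. Since Proposition~\ref{proposition 2-cocycle rho} shows the corresponding product $\sigma(\psi_0)\hat\otimes\sigma(\psi_1)\hat\otimes\sigma(\psi_{01})^{-1}$ in $\mathrm{GrBr}_{(G,\phi)}(\mathbb{C})''$ has the same class, the assignment respects products with no cohomological cancellation required; the extra cup-square terms are absorbed not by vanishing but by the passage between $\sigma(\psi_{01})$ and $\sigma(\psi_{01})^{-1}$.
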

\begin{proof}
    The result follows from Props. \ref{proposition extensions} and \ref{proposition 2-cocycle rho},
    and the fact
    that the algebra $\mathrm{End}(V_{\rho(\psi_0,\psi_1)})$ is similar to the algebra $\rho(\psi_0) \hat{\otimes}\rho(\psi_1) \hat{\otimes}\rho(\psi_{01})^{-1} $
     since they define the same classes in $\mathrm{GrBr}_{(G,\phi)}(\mathbb{C})''$  and by the isomorphism of Eqn. \eqref{GrBr'' = H2} they induce the same classes in $H^2(G, \mathbb{C}^*_\phi)$.
\end{proof}

Now, the 2-cocycle: 
\begin{align}
    \beta : \mathbb{Z}/2 \times \mathbb{Z}/2  \to \mathrm{GrBr}_{(G,\phi)}(\mathbb{C})'
\end{align}
of the extension of Eqn. \eqref{short exact sequence GrBR(G)}
measures the failure of the section:
\begin{align}
 \alpha :   \mathbb{Z}/2 &\to \mathrm{GrBr}_{(G,\phi)}(\mathbb{C})\\
    0 & \mapsto (\mathbb{C}, g \mapsto \mathrm{Ad}_{\mathbb{K}^{\phi(g)}}) \\
    1 &\mapsto (C^{1,0}_\mathbb{R}\underset{\mathbb{R}}{\otimes} \mathbb{C},g \mapsto \mathrm{Ad}_{\mathbb{K}^{\phi(g)}})
\end{align}
to be a homomorphism. Here we are using
the complexification of the Clifford algebra appearing in Eqn. \eqref{complexification C10}.
The 2-cocycle $\beta$ is non-trivial only for $\beta(1,1)$, and in this case
we have that:
\begin{align}
    \beta(1,1) = \alpha(1) \hat{\otimes} \alpha(1) \cong (C^{2,0}_\mathbb{R}\underset{\mathbb{R}}{\otimes} \mathbb{C},g \mapsto \mathrm{Ad}_{e_1^{\phi(g)}\mathbb{K}^{\phi(g))}}).
\end{align}
Note that:
\begin{align}
    \beta(1,1) = \sigma(\phi)
\end{align}
where $\sigma(\phi)$ is defined in Eqns. 
\eqref{sigma of psi}, \eqref{Algebra for a homomorphism psi} and \eqref{Algebra for a homomorphism psi2}. Therefore we may take:
 \begin{align}
     \beta(a_0,a_1) = \sigma(\phi^{a_0a_1}).
 \end{align}

Define the group:
\begin{align}
    \mathrm{GrBr}_{(G,\phi)}(\mathbb{C})' \overset{\bullet}{\times}_\beta \mathbb{Z}/2
\end{align}
as the set $\mathrm{GrBr}_{(G,\phi)}(\mathbb{C})' {\times} \mathbb{Z}/2$
with group structure:
\begin{align}
 (A_0,a_0)(A_1,a_1) &:= (A_1 \hat{\otimes}A_2 \hat{\otimes} \beta(a_0,a_1), a_0+a_1) \\
 &= (A_1 \hat{\otimes}A_2 \hat{\otimes} \sigma(\phi^{a_0a_1}), a_0+a_1)
 \end{align}
 then we obtain the following result.

\begin{corollary} \label{corollary GrBr}
    The assignment: 
    \begin{align}
  \Big(       \mathrm{GrBr}_{(G,\phi)}(\mathbb{C})' \overset{\bullet}{\times}_\beta \mathbb{Z}/2 \Big) & \overset{\cong}{\to}  \mathrm{GrBr}_{(G,\phi)}(\mathbb{C}) \\
  (A,a) & \mapsto A \hat{\otimes} \alpha(a)
    \end{align}
    is an isomorphism of groups.
\end{corollary}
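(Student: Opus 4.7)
The plan is to realize this corollary as the standard construction of a group extension from a 2-cocycle, applied to the short exact sequence
\begin{align*}
0 \to \mathrm{GrBr}_{(G,\phi)}(\mathbb{C})' \to \mathrm{GrBr}_{(G,\phi)}(\mathbb{C}) \to \mathrm{GrBr}(\mathbb{C}) \cong \mathbb{Z}/2 \to 0
\end{align*}
of Proposition \ref{proposition extensions}. The map $\alpha$ furnishes a set-theoretic section of the forgetful projection: $\alpha(0) = (\mathbb{C}, g\mapsto \mathrm{Ad}_{\mathbb{K}^{\phi(g)}})$ represents the identity of the Brauer group, while $\alpha(1) = C^{1,0}_\mathbb{R}\underset{\mathbb{R}}{\otimes}\mathbb{C}\cong C^{1}_\mathbb{C}$ reduces to the generator of $\mathrm{GrBr}(\mathbb{C})$ after forgetting the $(G,\phi)$-action.

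The next step is to identify the cocycle $\beta$ measuring the failure of $\alpha$ to be a homomorphism with values in the kernel $\mathrm{GrBr}_{(G,\phi)}(\mathbb{C})'$. The only nontrivial value is $\beta(1,1) = \alpha(1)\hat{\otimes}\alpha(1)$, which by the complexification rules of Eqn. \eqref{C{2,0} to complex} is $G$-equivariantly isomorphic to $(C^{2,0}_\mathbb{R}\underset{\mathbb{R}}{\otimes}\mathbb{C},\,g\mapsto \mathrm{Ad}_{e_1^{\phi(g)}\mathbb{K}^{\phi(g)}})$. Comparing with Eqns. \eqref{sigma of psi}--\eqref{Algebra for a homomorphism psi2}, this is precisely $\sigma(\phi)$, so that $\beta(a_0,a_1) = \sigma(\phi^{a_0 a_1})$, which is exactly the group law used in the definition of the twisted product $\mathrm{GrBr}_{(G,\phi)}(\mathbb{C})' \underset{\beta}{\times}\mathbb{Z}/2$.

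Having matched the cocycle, I would then verify that $(A,a)\mapsto A\hat{\otimes}\alpha(a)$ is a group homomorphism by direct expansion. The product $(A_0,a_0)(A_1,a_1)$ is sent to $A_0\hat{\otimes}A_1\hat{\otimes}\sigma(\phi^{a_0 a_1})\hat{\otimes}\alpha(a_0+a_1)$, whereas the product of images is $A_0\hat{\otimes}\alpha(a_0)\hat{\otimes}A_1\hat{\otimes}\alpha(a_1)$. These coincide in the Brauer group after applying the $G$-equivariant graded commutativity isomorphism of $\hat{\otimes}$ to reorder the factors and using the previously established identity $\alpha(a_0)\hat{\otimes}\alpha(a_1)\sim \sigma(\phi^{a_0 a_1})\hat{\otimes}\alpha(a_0+a_1)$. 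Bijectivity then follows from the five lemma applied to the evident morphism of short exact sequences whose left and right vertical arrows are identities.

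The main obstacle is the $G$-equivariant identification $\beta(1,1)\cong \sigma(\phi)$, since it requires tracking how the adjoint action by $\mathbb{K}^{\phi(g)}$ on the graded tensor square $C^{1,0}_\mathbb{R}\hat{\otimes}C^{1,0}_\mathbb{R}$, under the canonical isomorphism with $C^{2,0}_\mathbb{R}$, becomes conjugation by $e_1^{\phi(g)}\mathbb{K}^{\phi(g)}$; once this is handled, the rest of the argument is formal extension-theoretic bookkeeping.
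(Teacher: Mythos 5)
Your proposal is correct and follows essentially the same route the paper takes: the paper itself leaves this corollary unproved, presenting it as an immediate consequence of the preceding construction of the set-theoretic section $\alpha$, the identification $\beta(1,1)=\alpha(1)\hat{\otimes}\alpha(1)=\sigma(\phi)$, and the definition of the twisted product $\underset{\beta}{\times}$ as the standard reconstruction of an extension from its 2-cocycle. Your additional observations (verifying the homomorphism property via graded commutativity of $\hat{\otimes}$ and deducing bijectivity from the five lemma applied to the morphism of short exact sequences with identity maps on $\mathrm{GrBr}_{(G,\phi)}(\mathbb{C})'$ and $\mathbb{Z}/2$) are accurate and merely make explicit what the paper treats as formal.
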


Note that
    $\sigma(\phi)$ in $\mathrm{GrBr}_{(G,\phi)}(\mathbb{C})'$ corresponds to $(1, \phi)$ in $H^2(G, \mathbb{C}^*_\phi) \overset{\bullet}{\times}_\rho \mathrm{Hom}(G,\mathbb{Z}/2) $. Therefore we may define the group:
 \begin{align}
    \Big( H^2(G, \mathbb{C}^*_\phi) \overset{\bullet}{\times}_\rho \mathrm{Hom}(G,\mathbb{Z}/2) \Big) \overset{\bullet}\times_\beta \mathbb{Z}/2 
\end{align}
as the set $H^2(G, \mathbb{C}^*_\phi) {\times} \mathrm{Hom}(G,\mathbb{Z}/2)  \times \mathbb{Z}/2$
with product structure:
\begin{align}
    (\lambda_0, \psi_0, a_0)(\lambda_1, \psi_1, a_1) := ((-1)^{\psi_0 \cup \psi_1 + \psi_0 \cup \phi^{a_0a_1}+\psi_1 \cup \phi^{a_0a_1}}\lambda_0\lambda_1,\psi_0+\psi_1+\phi^{a_0a_1}, a_0+a_1)
\end{align}

Now we can now bundle up corollaries \ref{corollary GrBr'} and \ref{corollary GrBr} to obtain the following result.

\begin{theorem} \label{Theorem decomposition GrBr}
    The assignment: 
\begin{align}
   \Big( H^2(G, \mathbb{C}^*_\phi) \overset{\bullet}{\times}_\rho \mathrm{Hom}(G,\mathbb{Z}/2) \Big) \overset{\bullet}\times_\beta \mathbb{Z}/2 &\overset{\cong}{\to} \mathrm{GrBr}_{(G,\phi)}(\mathbb{C})\\
    (\lambda,\psi,a)) & \mapsto \mathrm{End}(V_\lambda) \hat{\otimes} \sigma(\psi) \hat{\otimes} \alpha(a)
    \label{assigmnment theorem}
\end{align}
    is an isomorphism of groups.
\end{theorem}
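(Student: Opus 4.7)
The proof is essentially a matter of assembling Corollaries \ref{corollary GrBr'} and \ref{corollary GrBr} into a single statement and checking that the product on the left-hand side matches the one induced by composing the two isomorphisms. My plan is to first invoke Corollary \ref{corollary GrBr} to obtain the bijection
\[
\mathrm{GrBr}_{(G,\phi)}(\mathbb{C})' \underset{\beta}{\times} \mathbb{Z}/2 \;\overset{\cong}{\to}\; \mathrm{GrBr}_{(G,\phi)}(\mathbb{C}),\qquad (A,a)\mapsto A\hat{\otimes}\alpha(a),
\]
and then use Corollary \ref{corollary GrBr'} to replace the first factor, producing the set-theoretic bijection of the statement. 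The assignment sending $(\lambda,\psi,a)$ to $\mathrm{End}(V_\lambda)\hat{\otimes}\sigma(\psi)\hat{\otimes}\alpha(a)$ is then immediate from the two intermediate assignments.

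Next I would verify that this bijection is a group homomorphism by translating the product in $\mathrm{GrBr}_{(G,\phi)}(\mathbb{C})' \underset{\beta}{\times}\mathbb{Z}/2$ through the isomorphism of Corollary \ref{corollary GrBr'}. Given two triples $(\lambda_0,\psi_0,a_0)$ and $(\lambda_1,\psi_1,a_1)$, their product in $\mathrm{GrBr}_{(G,\phi)}(\mathbb{C})' \underset{\beta}{\times}\mathbb{Z}/2$ is
\[
\bigl(A_0\hat{\otimes}A_1\hat{\otimes}\sigma(\phi^{a_0a_1}),\;a_0+a_1\bigr),
\]
where $A_i=\mathrm{End}(V_{\lambda_i})\hat{\otimes}\sigma(\psi_i)$, using the identification $\beta(a_0,a_1)=\sigma(\phi^{a_0a_1})$ already established above the statement. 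Under the isomorphism of Corollary \ref{corollary GrBr'}, the element $\sigma(\phi^{a_0a_1})$ corresponds to $(1,\phi^{a_0a_1})$, so the key computation is to evaluate the iterated product
\[
(\lambda_0,\psi_0)\cdot(\lambda_1,\psi_1)\cdot(1,\phi^{a_0a_1})
\]
inside $H^2(G,\mathbb{C}^*_\phi)\overset{\bullet}{\times}\mathrm{Hom}(G,\mathbb{Z}/2)$.

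Applying the twisted product of Corollary \ref{corollary GrBr'} twice gives first
\[
\bigl((-1)^{\psi_0\cup\psi_1}\lambda_0\lambda_1,\;\psi_0+\psi_1\bigr),
\]
and then multiplying by $(1,\phi^{a_0a_1})$ yields
\[
\bigl((-1)^{\psi_0\cup\psi_1+(\psi_0+\psi_1)\cup\phi^{a_0a_1}}\lambda_0\lambda_1,\;\psi_0+\psi_1+\phi^{a_0a_1}\bigr),
\]
which, after expanding the cup product, is exactly the first two components of the product defined in the statement. The third component $a_0+a_1$ is immediate. Therefore the map intertwines the products and, being a bijection, is an isomorphism of groups.

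The only potential subtlety, which I would check carefully, is that the cocycle $\beta$ for the outer extension is indeed given by $\sigma(\phi^{a_0a_1})$ and not by some cohomologous but differently represented cocycle; this was already shown in the discussion preceding Corollary \ref{corollary GrBr} by a direct graded tensor product computation using the complexification of $C^{1,0}_\mathbb{R}$. Beyond this, no new computation is required: the theorem is the formal combination of the two corollaries, with the cup product terms involving $\phi^{a_0a_1}$ arising precisely from the twisted multiplication by the image of $\sigma(\phi^{a_0a_1})$ in the $H^2\overset{\bullet}{\times}\mathrm{Hom}$ coordinates.
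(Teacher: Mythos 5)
Your proposal is correct and takes essentially the same route as the paper: both establish the bijection by composing Corollaries \ref{corollary GrBr'} and \ref{corollary GrBr} and then verify the homomorphism property by checking that the twisted $2$-cocycles compose correctly, with $\beta(a_0,a_1)=\sigma(\phi^{a_0a_1})$ as the key identification. The only difference is organizational: you compute the iterated product directly in the coordinates of $H^2(G,\mathbb{C}^*_\phi)\overset{\bullet}{\times}\mathrm{Hom}(G,\mathbb{Z}/2)$, which lets you avoid the paper's explicit case split on whether $a_0$ or $a_1$ is zero; this is a clean simplification but not a different argument.
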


\begin{proof}
    We just need to show that the assignment is a homomorphism of groups. 
We have on the one side that:
\begin{align}
    (\lambda_0, \psi_0, a_0)(\lambda_1, \psi_1, a_1) = ((-1)^{\psi_0 \cup \psi_1 + \psi_0 \cup \phi^{a_0a_1}+\psi_1 \cup \phi^{a_0a_1}}\lambda_0\lambda_1,\psi_0+\psi_1+\phi^{a_0a_1}, a_0+a_1)
\end{align}
     is mapped to: 
    \begin{align} \label{expression morphism+product}
        \mathrm{End}(V_{(-1)^{\psi_0 \cup \psi_1 + \psi_0 \cup \phi^{a_0a_1}+\psi_1 \cup \phi^{a_0a_1}}\lambda_0\lambda_1}) \hat{\otimes} \sigma(\psi_0+\psi_1 +\phi^{a_0a_1}) \hat{\otimes} \alpha(a_0+a_1);
    \end{align}
   while on the other we have the product:
   \begin{align} \label{expression producto+morphism}
       \mathrm{End}(V_{\lambda_0}) \hat{\otimes} \sigma(\psi_0) \hat{\otimes} \alpha(a_0) \hat{\otimes}
       \mathrm{End}(V_{\lambda_1}) \hat{\otimes} \sigma(\psi_1) \hat{\otimes} \alpha(a_1).
   \end{align}
    Whenever $a_0$ or $a_1$ equal $0$, then the equality of the expressions 
    \eqref{expression morphism+product} and \eqref{expression producto+morphism} follows from Cor. \ref{corollary GrBr'}.
    If both $a_0$ and $a_1$ equal $1$, then $a_0+a_1=0$, and: 
    \begin{align}
       \mathrm{End}(V_{(-1)^{ \psi_0 \cup \phi+\psi_1 \cup \phi}}) \hat{\otimes} \sigma(\psi_0+\psi_1+\phi)  &= \sigma(\psi_0 +\psi_1) \hat{\otimes} \sigma(\phi) \\ 
 &=       \sigma(\psi_0 +\psi_1) \hat{\otimes} \alpha(1) \hat{\otimes} \alpha(1)
    \end{align}
together with: 
\begin{align}
     \mathrm{End}(V_{(-1)^{ \psi_0 \cup \psi_1}}) \hat{\otimes} \sigma(\psi_0+\psi_1)  &= \sigma(\psi_0) \hat{\otimes} \sigma(\psi_1). 
\end{align}
Assembling these equalities we get the equality:
\begin{align}
    \mathrm{End}(V_{(-1)^{\psi_0 \cup \psi_1 + \psi_0 \cup \phi^{a_0a_1}+\psi_1 \cup \phi^{a_0a_1}}\lambda_0\lambda_1}) \hat{\otimes} \sigma(\psi_0+\psi_1 +\phi^{a_0a_1}) \hat{\otimes} \alpha(a_0+a_1) & \\
    = \mathrm{End}(V_{\lambda_0}) \hat{\otimes} \sigma(\psi_0) \hat{\otimes} \alpha(a_0) \hat{\otimes}
       \mathrm{End}(V_{\lambda_1}) \hat{\otimes} &\sigma(\psi_1)  \hat{\otimes} \alpha(a_1),
\end{align}
    which implies that the assignment of \eqref{assigmnment theorem} is a homomorphism. The injectivity and surjectivity follow from corollaries \ref{corollary GrBr'} and \ref{corollary GrBr}.
\end{proof}

\subsection{Relation to other equivariant graded Brauer groups}

The explicit decomposition of the magnetic equivariant graded Brauer group presented in Thm. \ref{Theorem decomposition GrBr} fits within a series of decomposition results of Brauer groups of different kinds. Here we will survey some of these decompositions.

Using Galois cohomology it can be shown that
the Bauer group of a field $k$ can be described in cohomological terms as follows
\cite[Thm. 4.4.7]{Gille_Central_Simple_algebras}:
\begin{align}
    \mathrm{Br}(k) \cong H^2(\mathrm{Gal}(k_s/k), k_s^*)
\end{align}
where $k_s$ is the separable closure of $k$ and the absolute Galois group $\mathrm{Gal}(k_s/k)$ acts on the coefficients $k_s^*$.

The equivariant extension of the Brauer group is due to Fr\"ohlich, and in \cite[Thm. 4.1]{Froelich} it is shown the following isomorphism:
\begin{align}
    \mathrm{Br}_G(k) \cong \mathrm{Br}(k) \times H^2(G,k^*)
\end{align}
where $G$ acts trivially on $k^*$.

The extension to graded algebras was due to Wall and the graded Brauer group was born (also known as Brauer-Wall group). In \cite[Thm. 3]{Wall_Graded_Brauer_groups} the extension of groups is shown, cf. \cite[Thm. 4.4]{Lam_Introduction_to_quadratic_forms}:
\begin{align}
 0 \to  \mathrm{Br}(k) \to   \mathrm{GrBr}(k) \to  \left((k^*/k^{*2}) \overset{\cdot}{\times} \mathbb{Z}/2\right) \to 0
\end{align}
where the product structure on $(k^*/k^{*2}) \overset{\cdot}{\times} \mathbb{Z}/2$ is given by the formula: 
\begin{align}
    (d,e)(d',e')=((-1)^{ee'}dd',e+e'),
\end{align}
and the 2-cocycle of the extension has a similar description than the one in Prop. \ref{proposition 2-cocycle rho}.

The equivariant graded Brauer group was introduced by Riehm, and in \cite[Thm. 4]{Riehm_Graded_Equivariant_CSA} the following isomorphism is shown:
\begin{align} \label{equivariant graded brauer group}
    \mathrm{GrBr}_G(k) \cong \Big( H^2(G,k^*) \overset{\bullet}{\times}  \mathrm{Hom}(G, \mathbb{Z}/2) \Big) \times \mathrm{GrBr}(k) 
\end{align}
where the twisted product $\overset{\bullet}{\times}$ is defined by the 2-cocycle which is the composition of the cup product with the homomorphism induced by the inclusion of coefficients $\mathbb{Z}/2 \to k^*$:
\begin{align}
    \mathrm{Hom}(G, \mathbb{Z}/2) \times \mathrm{Hom}(G, \mathbb{Z}/2) &\overset{\cup}{\to} H^2(G,\mathbb{Z}/2)\to H^2(G,k^*) \nonumber\\
    (\psi_0,\psi_1)& \mapsto \psi_0 \cup \psi_1 \mapsto (-1)^{\psi_0\cup \psi_1}.
\end{align}

Applying the decomposition formula of Eqn. \eqref{equivariant graded brauer group} to the cases of $k= \mathbb{R}, \mathbb{C}$ we obtain the following formulas:
\begin{align}
    \mathrm{GrBr}_G(\mathbb{R}) & \cong \Big( H^2(G,\mathbb{Z}/2) \overset{\bullet}{\times}  \mathrm{Hom}(G, \mathbb{Z}/2) \Big) \times \mathbb{Z}/8, \\
    \mathrm{GrBr}_G(\mathbb{C}) & \cong \Big( H^2(G,\mathbb{C}^*) \overset{\bullet}{\times}  \mathrm{Hom}(G, \mathbb{Z}/2) \Big) \times \mathbb{Z}/2.
\end{align}

Relating these two decompositions of the graded equivariant Brauer groups to the magnetic equivariant graded Brauer groups we get the following homomorphisms.

Take the magnetic group $(G_0 \times \mathbb{Z}/2, \pi_2)$ and note that the complexification of real algebras induces the homomorphism:
\begin{align}
    \mathrm{GrBr}_{G_0}(\mathbb{R}) \to  \mathrm{GrBr}_{(G_0\times \mathbb{Z}/2, \pi_2)}(\mathbb{C}).
\end{align}
This homomorphism fails to be an isomorphism for many groups, and the explicit examples will be shown in the next section.

If $G_0 = \mathrm{ker}(G \overset{\phi}{\to} \mathbb{Z}/2)$ is the core of the magnetic group $G$, the restriction of the $G$ action to $G_0$ induces a homomorphism of groups:
\begin{align}
    \mathrm{GrBr}_{(G,\phi)}(\mathbb{C}) \to  \mathrm{GrBr}_{G_0}(\mathbb{C})
\end{align}
which is surjective whenever the homomorphism:
\begin{align}
    H^2(G,\mathbb{C}^*_{\phi}) \to H^2(G_0,\mathbb{C}^*)
\end{align}
is surjective.  The explicit examples will be shown in the next section.





\subsection{Examples}

Prior to introducing the examples, let us start by recalling some well known facts on the cohomology of groups.

Consider the short exact sequence of coefficients:
\begin{align}
    0 \to \mathbb{Z}_\phi \overset{\times 2\pi i}\longrightarrow \mathbb{C}_\phi \overset{\exp}\longrightarrow \mathbb{C}_\phi^* \to 0
\end{align}
compatible with the $G$-action induced by the homomorphism $\phi$, that is, on $\mathbb{Z}_\phi$ and $\mathbb{C}_\phi$ it acts by the sign representation, and on $\mathbb{C}_\phi^*$ by complex conjugation.

We know that $H^j(G, \mathbb{C}_\phi)=0$ for $j \geq 1$ and therefore the long exact sequence in coefficients imply that:
\begin{align}
    H^j(G, \mathbb{C}_\phi^*) \cong H^{j+1}(G, \mathbb{Z}_\phi) \ \ \ \mathrm{for} \ \ \ j \geq 1.
\end{align}

Take now $C$ to be a finite cyclic group and $M$ any abelian group with an action of $C$;    this is the same as a $\mathbb{Z}C$-module.

From \cite[p.59]{Brown_cohomology_of_groups} we know that the cohomology of the cyclic group $C$ with coefficients in the module $M$
can be determined for $k \geq 1$ by the isomorphisms:
\begin{align}
    H^{0}(C,M) & \cong M^C\\
    H^{2k}(C,M) & \cong \mathrm{coker}(M_C \to M^C)  \ \ \ \mathrm{for}   \ \ k \geq 1\\
    H^{2k-1}(C,M) &\cong \mathrm{ker}(M_C \to M^C)   \ \ \ \mathrm{for}   \ \ k \geq 1
\end{align}
where the invariants  and the coinvariants  are respectively defined as follows:
\begin{align}
    M^C &= \{ m \in M | \forall g \in C \ gm=m \} \\
    M_C &= M / \langle gm-m | g \in C, m \in M \rangle
\end{align}
and the homomorphism: 
\begin{align}
    M_C \to M^C, \ \ [m]\mapsto \sum_{g \in C} gm
\end{align} is called the norm map.

For the group $\mathbb{Z}/2$ we have the coefficients $\mathbb{Z}$ with trivial action and  
$\mathbb{Z}_-$ with the sign representation.
Here we have $\mathbb{Z}_{\mathbb{Z}/2}=\mathbb{Z}$, 
$\mathbb{Z}^{\mathbb{Z}/2}=\mathbb{Z}$,
$(\mathbb{Z}_-)_{\mathbb{Z}/2}=\mathbb{Z}/2$, 
$(\mathbb{Z}_-)^{\mathbb{Z}/2}=0$ and therefore for $k \geq 1$ we have the isomorphism:
\begin{align}
    H^{2k}(\mathbb{Z}/2, \mathbb{Z})  \cong H^{2k-1}(\mathbb{Z}/2, \mathbb{Z}_-) &\cong \mathbb{Z}/2,\\
    H^{2k-1}(\mathbb{Z}/2, \mathbb{Z})  = H^{2k}(\mathbb{Z}/2, \mathbb{Z}_-) &=0.
\end{align}

To calculate $H^2(G, \mathbb{C}_\phi^*)$ for the magnetic group $(G, \phi)$ we use the Lyndon-Hochschild-Serre (LHS) spectral sequence associated to the short exact sequence of groups:
\begin{align}
G_0 \to G \overset{\phi}{\to} \mathbb{Z}/2.
\end{align}
The second page of the spectral sequence gives us the terms:
\begin{align}
    E_2^{2,0} = &H^2(\mathbb{Z}/2, \mathbb{C}^*_{\mathrm{id}}) \cong \mathbb{Z}/2, \\ 
    E_2^{1,1} = & H^1(\mathbb{Z}/2, H^1(G_0,\mathbb{C}^*_\phi)), \\
    E_2^{0,2} = & {H^2(G_0, \mathbb{C}^*_\phi)}^{\mathbb{Z}/2},
\end{align}
and the second differential $d_2:E_2^{p,q} \to E_2^{p+2,q-1}$ is the only differential that might be non-zero. This follows from the fact that the term: 
\begin{align}    
E_2^{3,0} \cong H^3(\mathbb{Z}/2, \mathbb{C}^*_{\mathrm{id}}) \cong H^4(\mathbb{Z}/2, \mathbb{Z}_-) = 0
\end{align}
and therefore $E_3^{p,q}=E_\infty^{p,q}$ for $p+q=2$.

\begin{example} [Cyclic magnetic groups]
    Consider the magnetic cyclic group $(G,\phi)=(\mathbb{Z}/(2n),\mathrm{mod}_2)$.
 We have in this case $(\mathbb{Z}_{-})^{\mathbb{Z}/2n}=0$  and 
 $(\mathbb{Z}_{-})_{\mathbb{Z}/2n} = \mathbb{Z}/2$,
 and therefore:
\begin{align}
    H^2(\mathbb{Z}/2n,\mathbb{C}^*_{\phi}) = & H^3(\mathbb{Z}/2n,\mathbb{Z}_{-}) \\ = & \mathrm{ker}\left( (\mathbb{Z}_{-})_{\mathbb{Z}/2n}\to (\mathbb{Z}_{-})^{\mathbb{Z}/2n}\right) \\ 
    = & \mathbb{Z}/2.
\end{align}

Note that $H^2(\mathbb{Z}/2n,\mathbb{Z}/2)= \mathbb{Z}/2$ and the inclusion of coefficients $\mathbb{Z}/2 \to \mathbb{C}^*_\phi$ induces an isomorphism:
\begin{align}
    H^2(\mathbb{Z}/2n,\mathbb{Z}/2) \overset{\cong}{\to}
    H^2(\mathbb{Z}/2n,\mathbb{C}^*_\phi).
\end{align}

The non-trivial element in $H^2(\mathbb{Z}/2n,\mathbb{Z}/2) $ defines the $\mathbb{Z}/2$-central extension of $\mathbb{Z}/2n$ given by:
\begin{align}
    \mathbb{Z}/2 \to \mathbb{Z}/4n \to \mathbb{Z}/2n.
\end{align}

If we consider any surjective homomorphism $\mu:\mathbb{Z}/2n \to \mathbb{Z}/2k$ 
of the magnetic groups $(\mathbb{Z}/2n, \mathrm{mod}_2)$ and $(\mathbb{Z}/2k, \mathrm{mod}_2)$ with $k|n$,
we have that the pullback of $\mathbb{Z}/4k$ by $\mu$ depends of the parity of $\tfrac{n}{k}$. Explicitly we have:
\begin{align}
    \mu^*(\mathbb{Z}/4k)
    \cong 
    \begin{cases}
    \mathbb{Z}/2 \times \mathbb{Z}/2n & \tfrac{n}{k} \ \mathrm{even},\\
     \mathbb{Z}/4n & \tfrac{n}{k} \ \mathrm{odd}.
    \end{cases}
\end{align}
and therefore the pullback homomorphism:
\begin{align}
    \mu^*: H^2(\mathbb{Z}/2k, \mathbb{Z}/2) \to H^2(\mathbb{Z}/2n, \mathbb{Z}/2)
\end{align}
is trivial if $\tfrac{n}{k}$ is even, and an isomorphism if $\tfrac{n}{k}$ is odd. Since the class in $H^2(\mathbb{Z}/2, \mathbb{Z}/2)$ is generated by the square of the non-trivial  class in $\mathrm{Hom}(\mathbb{Z}/2, \mathbb{Z}/2)$, we have that the square of the 
non trivial class in $\mathrm{Hom}(\mathbb{Z}/2n, \mathbb{Z}/2)$ trivializes in $H^2(\mathbb{Z}/2n, \mathbb{Z}/2)$ whenever $n$ is even, and it is the generator of the group if $n$ is odd. We conclude that:
\begin{align} \label{GrBr(Z2n)}
     \mathrm{GrBr}_{(\mathbb{Z}/2n,\mathrm{mod}_2)}(\mathbb{C}) =
     \begin{cases}
         \mathbb{Z}/8 &   n \ \mathrm{odd} \\
         \mathbb{Z}/2 \times \mathbb{Z}/4 &   n \ \mathrm{even},
     \end{cases}
\end{align}
and the induced homomorphisms for $1 \leq k$ are the following:
\begin{align}
    \mathrm{GrBr}_{(\mathbb{Z}/2,\mathrm{id})}(\mathbb{C}) &\to \mathrm{GrBr}_{(\mathbb{Z}/4k,\mathrm{mod}_2)}(\mathbb{C})& \mathrm{GrBr}_{(\mathbb{Z}/2,\mathrm{mod}_2)}(\mathbb{C}) & \stackrel{\cong}{\to} \mathrm{GrBr}_{(\mathbb{Z}/2(2k+1),\mathrm{mod}_2)}(\mathbb{C}) \\
    \mathbb{Z}/8 &\to \mathbb{Z}/2 \times \mathbb{Z}/4& \mathbb{Z}/8 &\stackrel{=}{\to} 
     \mathbb{Z}/8 \\
     a & \mapsto (0,a) & & 
\end{align}

Note that the generator of the Brauer group $\mathrm{Br}_{(\mathbb{Z}/2,\mathrm{id})}(\mathbb{C})$
can be taken to be the algebra:
\begin{align}
\left(M_2(\mathbb{C}),  \mathrm{Ad}_{\left(\begin{smallmatrix}
            0 & -1 \\ 1 & 0
        \end{smallmatrix}\right) \mathbb{K}}\right),
\end{align}
being the image of $\mathbb{H}$ in the Brauer group $\mathrm{Br}(\mathbb{R})$ after complexification, as it is shown in Eqn. \eqref{M2(C) generator Br(R)}. The magnetic projective
representation of $\mathbb{Z}/2$ that the operator ${\left(\begin{smallmatrix}
            0 & -1 \\ 1 & 0
        \end{smallmatrix}\right)} \mathbb{K}$ defines,
        is not projective once it is being pulled back to $\mathbb{Z}/4k$. This shows that the morphism:
\begin{align}
   \mathbb{Z}/2 \cong  \mathrm{Br}_{(\mathbb{Z}/2,\mathrm{id})}(\mathbb{C}) \overset{\times0}{\to} \mathrm{Br}_{(\mathbb{Z}/4k,\mathrm{mod}_2)}(\mathbb{C}) \cong \mathbb{Z}/2
\end{align}
        is trivial.
\end{example}

\begin{example}[Direct and semidirect product by $\mathbb{Z}/2$]    
Consider the magnetic group of the form $(G,\phi)=(G_0\rtimes \mathbb{Z}/2,\pi_2)$. 
In this case the homomorphisms of magnetic groups:
\begin{align}
    (\mathbb{Z}/2, \mathrm{id}) \overset{\iota}{\to} (G_0 \rtimes \mathbb{Z}/2, \pi_2) \overset{\pi_2}{\to} (\mathbb{Z}/2, \mathrm{id}) 
\end{align}
with $\iota(a)=(1_G,a)$ and $\pi_2 \circ \iota = \mathrm{id}_{\mathbb{Z}/2}$, implies that:
\begin{align}
   \mathbb{Z}/8 \cong \mathrm{GrBr}_{(\mathbb{Z}/2,\mathrm{id})}(\mathbb{C}) \subset \mathrm{GrBr}_{(G_0 \rtimes\mathbb{Z}/2,\pi_2)}(\mathbb{C}). 
\end{align}
The second differential of the LHS spectral sequence is trivial for the elements with $p+q \leq 2$ and we have that:
\begin{align}\label{extension probem for products}
0 \to 
H^1(\mathbb{Z}/2, H^1(G_0, \mathbb{C}^*_\phi)) \to
    \tfrac{H^2(G_0 \rtimes \mathbb{Z}/2, \mathbb{C}^*_\phi)}{H^2(\mathbb{Z}/2, \mathbb{C}^*_{\mathrm{id}})}  \to H^2(G_0, \mathbb{C}^*_\phi)^{\mathbb{Z}/2} \to 0
\end{align}

Let us take $G_0=\mathbb{Z}/2n$ and the direct product $\mathbb{Z}/2n \times \mathbb{Z}/2$ with $\phi$ the projection on the second coordinate. In this case we have:
\begin{align}
    H^2(\mathbb{Z}/2n \times \mathbb{Z}/2, \mathbb{C}^*_\phi) \cong H^2( \mathbb{Z}/2, \mathbb{C}^*_{\mathrm{id}}) \times H^1(\mathbb{Z}/2, \widehat{\mathbb{Z}/2n}_-)
\end{align}
where $\widehat{\mathbb{Z}/2n} \cong H^1(\mathbb{Z}/2n, \mathbb{C}^*)$ and the $\mathbb{Z}/2$-action conjugates the homomorphisms in $\widehat{\mathbb{Z}/2n}$.

The non-trivial twisted homomorphism in:
\begin{align}
    H^1(\mathbb{Z}/2, \widehat{\mathbb{Z}/2n}_-) \cong \mathbb{Z}/2
\end{align}
corresponds to the projective representation of the magnetic group generated by the matrices:
\begin{align}
  (1,0) \mapsto  \left(\begin{smallmatrix}
           1& 0  \\  0 & -1
        \end{smallmatrix}\right), \ \ \  \ \ (0,1) \mapsto
        \left(\begin{smallmatrix}
            0 & 1 \\ 1 & 0
        \end{smallmatrix}\right) \mathbb{K}.
\end{align}
 Note that the commutator of these two matrices is a non-trivial element
in the center of $M_2(\mathbb{C})$; hence it is  a projective representation of the magnetic group $(\mathbb{Z}/2n \times \mathbb{Z}/2,\pi_2)$.

Since $H^2(\mathbb{Z}/2n,\mathbb{Z}/2)=\mathbb{Z}/2$, we obtain the following homomorphisms:
\begin{align}
\mathrm{GrBr}_{\mathbb{Z}/2n}(\mathbb{R})  &\to 
\mathrm{GrBr}_{(\mathbb{Z}/2n\times \mathbb{Z}/2, \pi_2)}(\mathbb{C})\\
{\scriptstyle H^2(\mathbb{Z}/2n,\mathbb{Z}/2) \times \mathrm{Hom}(\mathbb{Z}/2n\,\mathbb{Z}/2) \times \mathbb{Z}/8} & \to
{\scriptstyle H^1(\mathbb{Z}/2, \widehat{\mathbb{Z}/2n}_-) \times \mathrm{Hom}(\mathbb{Z}/2n\,\mathbb{Z}/2) \times \mathbb{Z}/8} \\ 
\mathbb{Z}/2 \times \mathbb{Z}/2 \times \mathbb{Z}/8 & \to \mathbb{Z}/2 \times \mathbb{Z}/2 \times \mathbb{Z}/8 \\
(a,b,c) & \mapsto (0,b,c).
\end{align}

Forgetting further the antilinear automorphisms we get the homomorphism:
\begin{align} 
\mathrm{GrBr}_{(\mathbb{Z}/2n\times \mathbb{Z}/2, \pi_2)}(\mathbb{C})
& \to \mathrm{GrBr}_{\mathbb{Z}/2n}(\mathbb{C}) \\
{\scriptstyle H^1(\mathbb{Z}/2, \widehat{\mathbb{Z}/2n}_-) \times \mathrm{Hom}(\mathbb{Z}/2n\,\mathbb{Z}/2) \times \mathbb{Z}/8} & \to {\scriptstyle H^2(\mathbb{Z}/2n,\mathbb{C}^*) \times \mathrm{Hom}(\mathbb{Z}/2n\,\mathbb{Z}/2) \times \mathbb{Z}/2} \\ 
\mathbb{Z}/2 \times \mathbb{Z}/2 \times \mathbb{Z}/8 & \to \mathbb{Z}/2 \times \mathbb{Z}/2 \times \mathbb{Z}/2 \\
(a,b,c) & \mapsto (0,b,c).
\end{align}

\begin{remark} The magnetic equivariant graded  Brauer group $\mathrm{GrBr}_{(\mathbb{Z}/2n\times \mathbb{Z}/2, \pi_2)}(\mathbb{C})$ is isomorphic to  the graded real equivariant Brauer group
$\mathrm{GrBr}_{\mathbb{Z}/2n}(\mathbb{R})$, but the complexification from the latter to the former is not an isomorphism. The real projective representation of $\mathbb{Z}/2n$ is not projective once complexified, and there is a magnetic projective representation of the magnetic group $\mathbb{Z}/2n \times \mathbb{Z}/2$ which is of complex type and does not appear as a complexification of a real projective representation.
\end{remark}
\end{example}

\subsection{Relation to Dyson's ten-fold classification}

In 1962, while studying symmetries of Hamiltonians, physicist F. Dyson argued that the three-fold  classification of associative division algebras by Frobenius should be enhanced by a ten-fold classification which incorporates the symmetries of graded vector spaces \cite{Dyson_Threefold}.

This ten-fold classification of Dyson is obtained when studying the algebra of endomorphisms  of irreducible graded representations of graded groups \cite{Moore-Dysons}. Let us recall the main constructions.

Let $(G,\phi)$ be a magnetic group, and endow it with an extra grading defined by a homomorphism $\psi: G \to \mathbb{Z}/2$. Let $V$ be a complex graded magnetic representation of $G$, meaning that $V= V_0 \oplus V_1$ is a graded vector space,
the elements of $\psi^{-1}(0)$ act by homogeneous endomorphisms of degree zero, while the elements of  
$\psi^{-1}(1)$ act by homogeneous endomorphisms of degree one, and the elements of $G_0=\phi^{-1}(0)$ act complex linearly while the elements of $G \backslash G_0$ ac complex antilinearly. 

The endomorphism algebra $\mathrm{End}(V)$ can be endowed with an action of $G$ by conjugation:
\begin{align}
    G \times \mathrm{End}(V) \to \mathrm{End}(V)\\
    (g, \Phi) \mapsto g \Phi g^{-1},
\end{align}
making it a magnetic equivariant central simple graded algebra, and thus defining  a similarity class
in $\mathrm{GrBr}_{(G,\phi)}(\mathbb{C})$.

If the representation $V$ is irreducible, the algebra of $G$-invariant endomorphisms 
$\mathrm{End}(V)^G$, which is the same as the $G$-equivariant endomorphisms, becomes a graded division algebra over the reals. The explicit classification of the ten possibilities was masterfully done by Dyson \cite{Dyson_Threefold} and was later related to the classification of the graded division algebras over the reals \cite{Moore-Dysons}.

Note two important facts. First, the algebra $\mathrm{End}(V)$ for $V$ irreducible of degree zero, is trivial in the graded Brauer group $\mathrm{GrBr}_{(G,\phi)}(\mathbb{C})$, independently of the magnetic group $G$. Second, for the case $G=\mathbb{Z}/2$ we have always that
$\mathrm{End}(V)^{\mathbb{Z}/2} \cong \mathbb{R}$.

Therefore we see that there is no way to extract information of the $G$-invariant algebras out of the magnetic equivariant graded Brauer groups, except perhaps for the magnetic group $\mathbb{Z}/2$. Since the algebra $\mathrm{End}(V)$
is trivial in $\mathrm{GrBr}_{(G,\phi)}(\mathbb{C})$, then one would expect that its $G$-invariant part would be trivial in $\mathrm{GrBr}(\mathbb{R})$. But as we have shown in Eqn. \eqref{wigenr irreps}, this is not possible.

The case of the magnetic group $\mathbb{Z}/2$ is special in the sense that taking $\mathbb{Z}/2$-invariants is the inverse map of the complexification isomorphism:
\begin{align}
    \mathrm{GrBr}(\mathbb{R}) \underset{\cong}{\overset{\otimes \mathbb{C}}{\longrightarrow}} \mathrm{GrBr}_{(\mathbb{Z}/2, \mathrm{id})}(\mathbb{C}) \underset{\cong}{\overset{\wedge(\mathbb{Z}/2)}{\longrightarrow}}  \mathrm{GrBr}(\mathbb{R}).
    \end{align}
For the magnetic group $\mathbb{Z}/2$ we can relate its graded equivariant Brauer groups with the set of real graded division algebras, as it is shown in Table \ref{Table-tenfold-way}. Meanwhile for a general magnetic group $(G,\phi)$, the magnetic equivariant graded Brauer group $\mathrm{GrBr}_{(G,\phi)}(\mathbb{C})$ does not relate in the same manner as for the group $\mathbb{Z}/2$ to the set of real graded division algebras.



\section{Twistings in magnetic equivariant K-theory} The graded Brauer group appears
as the natural group parametrizing twistings in K-theory. This has been masterfully shown by Donovan and Karoubi in \cite{Donovan_Karoubi_Graded_Brauer_k_theory}
where they study and analyze twisted K-theory as the appropriate K-theory with local coefficients, building on the theory of Banach categories developed by Karoubi in his thesis \cite{Karoubi-thesis}. 

The foundations, properties and results of Karoubi's approach to twisted K-theory provide an appropriate background where graded Brauer groups are the fundamental objects, and the  K-theories associated to them recover the well known K-theories built from vector bundles on spaces.
Here we will follow Karoubi's approach to distinguish the twists on magnetic equivariant K-theory \cite{serrano2025magneticequivariantktheory}. The main construction involves the theory of Banach categories which we will follow from Karoubi's book  \cite{karoubi}, and from the survey on the relation of twisted K-theory and the graded Brauer groups done by the same author  \cite{Karoubi_twisted_old_new}.

 Summarizing some of the results by Donovan and Karoubi \cite{Donovan_Karoubi_Graded_Brauer_k_theory} and incorporating some of the results of Atiyah and Segal \cite{AtiyahSegal-twisted-k-theory}, we have that the complex and real equivariant graded Brauer groups of a finite $G$-CW-complex $X$ become:
 \begin{align} \label{GrBrU}
     \mathrm{GrBrU}_{G}(X) &\cong \Big(\mathrm{Tors}(H^3_G(X,\mathbb{Z})) \overset{\bullet}{\times} H^1_G(X, \mathbb{Z}/2) \Big){\times} \mathbb{Z}/2, \\ \label{GrBrO}
     \mathrm{GrBrO}_{G}(X) &\cong\Big( H^2_G(X,\mathbb{Z}/2) \overset{\bullet}{\times} H^1_G(X, \mathbb{Z}/2) \Big) {\times} \mathbb{Z}/8.
 \end{align}
Here the $G$-equivariant cohomology groups $H^*_G$ stand for the cohomology of the homotopy quotient $X \times_GEG$.
These two groups parametrize respectively the twistings in the K-theory $\mathrm{KU}_G(X)$ of complex $G$-equivariant bundles over $X$ and in 
the K-theory $\mathrm{KO}_G(X)$ of real $G$-equivariant bundles over $X$.
In both cases the product structure $\overset{\bullet}{\times}$ incorporates the cup product in cohomology as in Cor. \ref{corollary GrBr'},
where in the unitary case it is further composed with the integer Bockstein map.

The authors together with M. Xicont\'encatl \cite{serrano2025magneticequivariantktheory} have introduced the magnetic equivariant K-theory $\mathbf{K}_{(G,\phi)}(X)$ as the K-theory group of complex vector bundles over $X$
endowed with actions of the magnetic group $(G,\phi)$.
The twistings of this K-theory have been assumed by other authors \cite{FreedMoore_Twisted_equivariant_matter, Gomi2017FreedMooreK} to behave in similar way as the ones of the K-theories $\mathrm{KU}$ and $\mathrm{KO}$ presented in Eqns. \eqref{GrBrU} and \eqref{GrBrO}. Here we will argue that
the appropriate graded equivariant Brauer groups for magnetic equivariant K-theory are  the magnetic equivariant graded Brauer groups.
In particular we will show that the magnetic equivariant graded Brauer group $\mathrm{GrBr}_{(G,\phi)}(\mathbb{C})$ parametrizes the twistings of the magnetic equivariant K-theory $\mathbf{K}_{(G,\phi)}(*)$ of a point.

One main application of this perspective is the alternative proof of the degree shift isomorphism \cite[Thm. 3.12]{Gomi2017FreedMooreK} presented in Prop. \ref{degree shift}, and its consequence, which is a 4-periodicity on the total magnetic equivariant K-theory for certain magnetic groups presented in Cor.
\ref{corollary 4-periodicity}.

\subsection{K-theory \`a la Karoubi}
Let $(A, \tau : G \to \mathrm{MAut}_{\mathrm{gr}}(A))$ be a magnetic $(G,\phi)$-equivariant central simple graded algebra, thus defining a similarity class in $\mathrm{GrBr}_{(G, \phi)}(\mathbb{C})$.  Following Karoubi \cite[\S 5]{Donovan_Karoubi_Graded_Brauer_k_theory}, consider the category:
\begin{align}
\mathcal{E}^{A}_{(G,\phi)}
\end{align}
whose objects are  $(G,\phi)$-equivariant $A$-modules
 and whose morphisms are degree zero morphisms of $A$-modules which are $G$-equivariant.  These modules are
 graded $A$-modules $V=V_0 \oplus V_1$ endowed
 with a compatible action of $G$ such that:
 \begin{align} \label{equivariant formula for action}
     g \cdot (av) = (g\cdot a)(g \cdot v) 
 \end{align}
 for every $g \in G$, $a \in A$ and $v \in V$
\cite[Def. 1.1]{Karoubi-Weibel}.
Let $\overline{\mathcal{E}}^{A}_{(G,\phi)}$ be the category whose objects are the same as in
${\mathcal{E}}^{A}_{(G,\phi)}$
but whose morphisms are not necessarily of degree zero.

The automorphisms of $A$ parametrized by $G$ can be lifted to the algebra:
\begin{align}
A^{p,q}:=A \hat{\otimes}C^{p,q}_\mathbb{C}
\end{align}
making it a magnetic $(G,\phi)$-equivariant central simple graded algebra. Here the $G$ action on $A \hat{\otimes}C^{p,q}_\mathbb{C}$ is: 
\begin{align}
g \cdot(a \hat{\otimes} b):= (g \cdot a) \hat{\otimes} (\mathbb{K}^{\phi(g)}b)
\end{align}
 for $a \in A$ and $b\in C^{p,q}_\mathbb{C}$.
 The forgetful functor: 
\begin{align} \label{forgetful_phi_p_q}
    \varphi^{p,q} : \mathcal{E}^{A^{p,q}}_{(G,\phi)} \to \overline{\mathcal{E}}^{A^{p,q}}_{(G,\phi)}
\end{align}
is a Banach functor between Banach categories \cite{Karoubi_Categories-Banach}, and the Grothendieck group:
\begin{align} \label{K-theory banach functor}
\mathbf{K}^{A^{p,q}}_{(G,\phi)}
\end{align}
is the K-group $K(\varphi^{p,q})$ of the Banach functor $\varphi^{p,q}$ \cite[pp. 191]{Karoubi-thesis}
\footnote{To be precise, this is the definition appearing in \cite[Rem. 2, pp 203]{Karoubi-thesis} and also appearing in \cite[\S 5]{Donovan_Karoubi_Graded_Brauer_k_theory}.}.
 An element in this K-theory group consists of the homotopy class of a triple $(E,F, \sigma)$
with $E,F$ objects in ${\mathcal{E}}^{A^{p,q}}_{(G,\phi)}$
and $\sigma : \varphi^{p,q} E \overset{\cong}{\to} \varphi^{p,q} F$ an isomorphism
not necessarily of degree zero. 
A triple $(E,F, \sigma)$ is called {\it elementary} if $\alpha$ is of degree zero. Two triples
$(E_0,F_0, \sigma_0)$ and $(E_1,F_1, \sigma_1)$
are {\it homotopic} if there exists two isomorphisms of degree zero $f:E_0 \to E_1$ and $g:F_0 \to F_1$, and a continuous map $\alpha: [0,1] \to \mathrm{Iso}_{{\overline{\mathcal{E}}}^{A^{p,q}}_{(G,\phi)}}(\varphi^{p,q}E_0,\varphi^{p,q}F_0)$ of isomorphisms such that $\alpha(0)=\sigma_0$, $\alpha(1)=g^{-1} \sigma_1 f$ \cite[\S 1.4]{Karoubi_Categories-Banach}.
We set: 
\begin{align}
\mathbf{K}^{A}_{(G,\phi)}:= \mathbf{K}^{A^{0,0}}_{(G,\phi)}.
\end{align}

If $V=V_0 \oplus V_1$ is a graded magnetic representation of $(G, \phi)$, the functor:
\begin{align}
    \gamma : {\mathcal{E}}^{A}_{(G,\phi)}  \to {\mathcal{E}}^{A \hat{\otimes}\mathrm{End}(V)}_{(G,\phi)} \ \ \ F \mapsto F \hat{\otimes}V
\end{align}
is an equivalence of Banach categories \cite[Thm. 16]{Donovan_Karoubi_Graded_Brauer_k_theory}, and therefore it induces an isomorphism in K-theories
\begin{align} \label{iso ktheories similar algebras}
    \gamma_* : \mathbf{K}^{A}_{(G,\phi)} \overset{\cong}{\to} \mathbf{K}^{A \hat{\otimes}\mathrm{End}(V)}_{(G,\phi)}.
\end{align}

The previous isomorphism of Eqn. \eqref{iso ktheories similar algebras} shows that the K-theory thus defined is independent of the algebra in the similarity class (see Eqn. \eqref{eqn tensor reps}). Following  \cite[Def. 17] {Donovan_Karoubi_Graded_Brauer_k_theory} we can define:
\begin{definition}
    For $\alpha \in \mathrm{GrBr}_{(G,\phi)}(\mathbb{C})$
we define $\mathbf{K}_{(G,\phi)}^\alpha$ (up to canonical isomorphism) as the Grothendieck group $\mathbf{K}_{(G,\phi)}^A$ for any $A$ in the class of $\alpha.$ The complete K-theory groups are then:
\begin{align}
    \mathbf{K}_{(G,\phi)}^{\mathrm{tot}} := \bigoplus_{\alpha \in \mathrm{GrBr}_{(G,\phi)}(\mathbb{C})} \mathbf{K}_{(G,\phi)}^\alpha 
\end{align}
\end{definition}

\subsection{Twisted magnetic equivariant K-theory}
In order to recover the magnetic equivariant K-theory groups of a point defined in \cite{serrano2025magneticequivariantktheory}, we will first introduce some alternative forms of understanding
$(G,\phi)$-equivariant $A$-modules. Take the twisted algebra $A \rtimes G$ as the algebra of linear combinations of elements $(a, g)$ with $a \in A$ and $g \in G$ with product: 
\begin{align}
    (a,g)(b, h)=(a (g\cdot b),gh),
\end{align}
and consider the category $\mathcal{E}^{A \rtimes G}$ of graded modules $M$ of this
algebra where: 
\begin{align}
    (a,g)\lambda m = (\mathbb{K}^{\phi(g)}\lambda) (a,g) m
\end{align}
for $\lambda \in \mathbb{C}$ and $m \in M$.

\begin{lemma}
    The categories
${\mathcal{E}}^{A}_{(G,\phi)} $ and ${\mathcal{E}}^{A \rtimes G}$ are naturally equivalent. Hence: 
\begin{align}
\mathbf{K}^{A}_{(G,\phi)}\cong \mathbf{K}^{A \rtimes G}.
\end{align}
\end{lemma}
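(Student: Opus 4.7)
The plan is to construct a pair of explicit functors between $\mathcal{E}^{A}_{(G,\phi)}$ and $\mathcal{E}^{A \rtimes G}$ that are mutually inverse on objects and morphisms, and then observe that they respect the Banach structure so that the induced map on K-theory is an isomorphism.

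First I would define the functor $\Phi : \mathcal{E}^{A}_{(G,\phi)} \to \mathcal{E}^{A \rtimes G}$ on objects by sending a $(G,\phi)$-equivariant graded $A$-module $V$ to the same underlying graded vector space endowed with the action
\begin{align}
(a,g) \cdot v := a \, (g \cdot v).
\end{align}
Associativity of this action is a direct calculation using the defining compatibility
$g \cdot (bv) = (g \cdot b)(g \cdot v)$ of Eqn.~\eqref{equivariant formula for action}: on one side
$(a,g)(b,h)\cdot v = (a(g\cdot b), gh)\cdot v = a(g\cdot b)((gh)\cdot v)$,
and on the other $(a,g)\cdot((b,h)\cdot v) = a\,(g \cdot (b(h\cdot v))) = a(g\cdot b)((gh)\cdot v)$.
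The twisted linearity condition follows from the magnetic action of $G$ on $V$, since
$(a,g)(\lambda v) = a(g\cdot(\lambda v)) = (\mathbb{K}^{\phi(g)}\lambda)\,a(g\cdot v) = (\mathbb{K}^{\phi(g)}\lambda)(a,g)\cdot v$. The grading of $(a,g)$ is declared to be $\partial a$, which is well-defined because $G$ acts by graded automorphisms; with this convention $\Phi$ preserves the $\mathbb{Z}/2$-grading. On morphisms $\Phi$ is the identity: a degree $0$ $A$-linear $G$-equivariant map $f$ automatically satisfies $f((a,g)v) = af(g\cdot v) = a(g\cdot f(v)) = (a,g) f(v)$.

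Next I would define the inverse functor $\Psi : \mathcal{E}^{A \rtimes G} \to \mathcal{E}^{A}_{(G,\phi)}$ by restricting the action along the two canonical inclusions $A \hookrightarrow A \rtimes G$, $a \mapsto (a,1)$, and $G \hookrightarrow A \rtimes G$, $g \mapsto (1,g)$. The compatibility $g\cdot(av) = (g\cdot a)(g\cdot v)$ is then forced by the identity $(1,g)(a,1) = (g\cdot a, g) = (g\cdot a, 1)(1,g)$ in $A \rtimes G$, while the required complex antilinearity of the action of elements of $G \setminus G_0$ is exactly the twisted linearity condition built into $\mathcal{E}^{A \rtimes G}$. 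Verifying $\Phi \circ \Psi = \mathrm{id}$ and $\Psi \circ \Phi = \mathrm{id}$ is immediate from the decomposition $(a,g) = (a,1)(1,g)$.

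The two functors are additive, preserve the norms coming from the Banach structure on the underlying modules, and send the forgetful functor $\varphi^{p,q}$ of Eqn.~\eqref{forgetful_phi_p_q} to its analogue for $A \rtimes G$, so they yield an equivalence of Banach categories in the sense of \cite{Karoubi_Categories-Banach}. By functoriality of the Grothendieck construction applied to $\varphi^{p,q}$, this equivalence induces the claimed isomorphism
$\mathbf{K}^{A}_{(G,\phi)} \cong \mathbf{K}^{A \rtimes G}$. The only mildly delicate point, and the main thing to verify carefully, is that the grading on $A \rtimes G$ is consistent with the twisted linearity axiom and with the way $G$-action degrees interact with the Clifford tensor factors $C^{p,q}_{\mathbb{C}}$; everything else is a direct unwinding of definitions.
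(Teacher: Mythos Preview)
Your proposal is correct and follows essentially the same approach as the paper: both identify the $A$-action and the $G$-action as the restrictions along $a \mapsto (a,1_G)$ and $g \mapsto (1_A,g)$, and verify the compatibility $g\cdot(av) = (g\cdot a)(g\cdot v)$ using the relation $(1,g)(a,1) = (g\cdot a,1)(1,g)$ in $A \rtimes G$. Your version is somewhat more thorough than the paper's---you spell out both functors and the Banach-category compatibility, whereas the paper only writes out the passage from $A \rtimes G$-modules to $(G,\phi)$-equivariant $A$-modules---but the underlying argument is the same.
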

\begin{proof}
The categories
${\mathcal{E}}^{A}_{(G,\phi)} $ and ${\mathcal{E}}^{A \rtimes G}$ are naturally equivalent because any module $M$ of $A \rtimes G$ is an $A$-module by the assignment $am:=(a,1_G)m$ and the action
$g \cdot m := (1_A,g)m$ implies the equations:
\begin{align}
    g \cdot (am) &= (1_A,g)(am)\\& = (1_A,g)(a,1_G)m\\& =(g\cdot a,g)m \\ &=(g \cdot a, 1_G)(1_A,g)m \\&=(g \cdot a)(g \cdot m) 
\end{align}
which matches the definition of graded $A$-modules endowed
with compatible $G$ actions of Eqn. \eqref{equivariant formula for action}.
We have therefore the isomorphism of K-theory groups $
\mathbf{K}^{A}_{(G,\phi)}\cong \mathbf{K}^{A \rtimes G}$, where the right-hand side is the K-theory of the obvious Banach functor.
\end{proof}

In the particular case that the automorphisms
$\tau : G \to \mathrm{MAut}_{\mathrm{gr}}(A)$ are given
by inner automorphisms $\mathrm{Ad}_{X(g)\mathbb{K}^{\phi(g)}}$ 
with $X(g) \in A$  of degree zero for all $g \in G$
as in Eqn. \eqref{inner automorpshism twisted group},
 we know that the composition rule:
 \begin{align} \label{eqn X(g)}
     X(g)  \overline{X(h)}^{\phi(g)} = \rho(g,h)X(gh)
 \end{align}
defines a 2-cocycle $\rho \in Z^2(G, \mathbb{C}^*_\phi)$
satisfying the equation: 
\begin{align} \label{2-cocyclie rho of X}
  \rho(g,hk)   \overline{\rho(h,k)}^{\phi(g)} = \rho(gh,k) \rho(g,h).  
\end{align}
This is the cocycle description of what has been presented in Prop. \ref{prop twisted reps = cocycles} with a simplified notation defined as follows:
\begin{align}
    \overline{X(h)}^{\phi(g)} := \mathbb{K}^{\phi(g)} X(h)  \ \ \ \ \mathrm{and} \ \ \ \ \overline{\rho(h,k)}^{\phi(g)}   : = \mathbb{K}^{\phi(g)}\rho(h,k).
\end{align}

Define the algebra $A_\rho[G]$ as the linear combination of elements $\lfloor a, g \rfloor $ with $a \in A$, $g \in G$ and product structure:
\begin{align} \label{product twisted algebra}
    \lfloor a, g \rfloor \lfloor b, h \rfloor = \lfloor \rho(g,h) a \overline{b}^{\phi(g)},gh \rfloor.  
\end{align}
Then we have the following isomorphism (cf. \cite[Thm. 1.8]{Karoubi-Weibel}):
\begin{proposition} \label{prop semidirect action 0 twisted}
    Suppose that the group $G$ acts on $A$ by inner automorphisms $\mathrm{Ad}_{X(g)\mathbb{K}^{\phi(g)}}$ 
with $X(g) \in A$  of degree zero for all $g \in G$
satisfying the composition rule of Eqn. \eqref{eqn X(g)}
with the 2-cocycle $\rho$ satisfying Eqn. \eqref{2-cocyclie rho of X}. Then the assignment: 
\begin{align}
    A \rtimes G & \overset{\cong}{\to} A_\rho[G]\\
    (a,g) & \mapsto \lfloor aX(g),g \rfloor
\end{align}
is an isomorphism of algebras. In particular the following isomorphism hold:
\begin{align}
    \mathbf{K}^A_{(G,\phi)} \cong \mathbf{K}^{A \rtimes G} \cong \mathbf{K}^{A_\rho[G]}.
\end{align}
\end{proposition}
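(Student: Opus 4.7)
The plan is to prove the K-theory chain of isomorphisms by first establishing the underlying isomorphism of algebras $\Phi: A \rtimes G \to A_\rho[G]$, $(a,g) \mapsto \lfloor aX(g), g\rfloor$, and then invoking the previous lemma together with the obvious equivalence of module categories that an algebra isomorphism induces. Since two isomorphic algebras define canonically equivalent Banach categories of graded modules, the forgetful Banach functors match and therefore the Grothendieck groups constructed in Eqn. \eqref{K-theory banach functor} are identified.

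First I would verify that $\Phi$ is a bijection on underlying vector spaces. Each element $X(g)$ is invertible in $A$ (it is part of the inner automorphism formula $\mathrm{Ad}_{X(g)\mathbb{K}^{\phi(g)}}$), so the set-theoretic inverse is simply $\lfloor c, g\rfloor \mapsto (cX(g)^{-1}, g)$. Additivity and the appropriate $\mathbb{C}$-linear structure are tautological. The main content is multiplicativity. Expanding the left-hand side using the crossed-product rule $(a,g)(b,h)=(a(g\cdot b),gh)$ together with the explicit form $g\cdot b = X(g)\overline{b}^{\phi(g)}X(g)^{-1}$ yields
\begin{align*}
\Phi\bigl((a,g)(b,h)\bigr)=\bigl\lfloor aX(g)\overline{b}^{\phi(g)}X(g)^{-1}X(gh),\,gh\bigr\rfloor,
\end{align*}
while expanding the right-hand side using the product rule of $A_\rho[G]$ together with the antilinear behaviour of $\overline{\,\cdot\,}^{\phi(g)}$ on the product $bX(h)$ yields
\begin{align*}
\Phi(a,g)\Phi(b,h)=\bigl\lfloor \rho(g,h)\,aX(g)\overline{b}^{\phi(g)}\overline{X(h)}^{\phi(g)},\,gh\bigr\rfloor.
\end{align*}
Equality of the two expressions reduces, after cancelling the common factor $aX(g)\overline{b}^{\phi(g)}$, precisely to the defining relation $X(g)\overline{X(h)}^{\phi(g)}=\rho(g,h)X(gh)$ of Eqn. \eqref{eqn X(g)}.

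The main delicacy, which is where I expect most of the bookkeeping, is keeping track of the antilinear operation $\overline{\,\cdot\,}^{\phi(g)}$ on products and of where the scalar $\rho(g,h)\in\mathbb{C}^*$ sits; in particular one uses that such scalars are central and that $\overline{bX(h)}^{\phi(g)}=\overline{b}^{\phi(g)}\overline{X(h)}^{\phi(g)}$ because $\mathbb{K}^{\phi(g)}$ is an antilinear ring automorphism. Compatibility of iterated products (i.e.\ associativity) is automatic once multiplicativity holds, but a sanity check uses the 2-cocycle identity of Eqn. \eqref{2-cocyclie rho of X}, which is exactly the associativity constraint of $A_\rho[G]$ noted earlier in the paper.

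Finally, assembling the pieces: by the previous lemma one has $\mathbf{K}^A_{(G,\phi)}\cong \mathbf{K}^{A\rtimes G}$, and the algebra isomorphism $A\rtimes G\cong A_\rho[G]$ just established gives a canonical equivalence of the module categories, hence of the Banach functors in Eqn. \eqref{forgetful_phi_p_q}, hence of the corresponding K-groups $\mathbf{K}^{A\rtimes G}\cong \mathbf{K}^{A_\rho[G]}$. Composing yields the desired chain of isomorphisms.
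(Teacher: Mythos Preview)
Your proposal is correct and follows essentially the same route as the paper: both compute $\Phi((a,g)(b,h))$ and $\Phi(a,g)\Phi(b,h)$ explicitly and reduce their equality to the defining relation $X(g)\overline{X(h)}^{\phi(g)}=\rho(g,h)X(gh)$, then deduce the K-theory isomorphisms from the previous lemma together with the algebra isomorphism. You add a few more words on bijectivity and on the bookkeeping of the antilinear operator, but the substance is identical.
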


\begin{proof}
    We just need to show that the assignment preserves the product structure. On the one hand we have:
    \begin{align}
        (a,g)(b,h) &= (a(g \cdot b),gh)\\
      &  = (aX(g) \overline{b}^{\phi(g)}X(g)^{-1},gh)
    \end{align}
    which is mapped to: 
    \begin{align}
         (a,g)(b,h) \mapsto & \lfloor aX(g) \overline{b}^{\phi(g)}X(g)^{-1}X(gh),gh\rfloor.
    \end{align}
    By Eqn. \eqref{eqn X(g)} we know that $X(g)^{-1}X(gh) = \rho(g,h)\overline{X(h)}^{\phi(g)}$,
    and therefore we have that:
    \begin{align}
        (a,g)(b,h) \mapsto  \lfloor\rho(g,h)aX(g) \overline{b}^{\phi(g)}\overline{X(h)}^{\phi(g)},gh\rfloor.
    \end{align}
On the other hand  we have:
\begin{align}
    \lfloor aX(g),g \rfloor\lfloor bX(h),h \rfloor = 
    \lfloor \rho(g,h) aX(g)\overline{b X(h)}^{\phi(g)},gh \rfloor.
\end{align}
The isomorphism of algebras follows. Therefore we have isomorphisms of K-theories:
\begin{align}
    \mathbf{K}^A_{(G,\phi)} \cong \mathbf{K}^{A \rtimes G} \cong \mathbf{K}^{A_\rho[G]}.
\end{align}
\end{proof}

Choosing $A=C^{p,q}_\mathbb{C}=C^{p,q}_\mathbb{R} \underset{\mathbb{R}}{\otimes} \mathbb{C}$ with complex conjugation for $G$-action, then the K-theory of the algebra $C^{p,q}_\mathbb{C}[G]$
is what the authors and Xicot\'encatl \cite[Def. 2.10]{serrano2025magneticequivariantktheory} used to define the magnetic equivariant K-theory groups:
$ \mathbf{K}^{p,q}_G. $

Denote by $\mathcal{M}^{C^{p,q}_\mathbb{C}[G]}$
the degree zero isomorphism classes of 
elements in $\mathcal{E}^{C^{p,q}_\mathbb{C}[G]}$,
and consider the restriction homomorphism
$\mathcal{M}^{C^{p,q+1}_\mathbb{C}[G]} \stackrel{\mathrm{res}}{\longrightarrow}  \mathcal{M}^{C^{p,q}_\mathbb{C}[G]} $. 
The cokernel of this map defines the 
K-theory 
\begin{align}
\mathcal{M}^{C^{p,q+1}_\mathbb{C}[G]} \stackrel{\mathrm{res}}{\longrightarrow}  \mathcal{M}^{C^{p,q}_\mathbb{C}[G]} \stackrel{\mathrm{ABS}}{\longrightarrow}
    \mathbf{K}^{p,q}_{G} \longrightarrow 0,
\end{align}
and the notation ABS
stands for the generalization of the Atiyah-Bott-Shapiro homomorphism \cite[\S 11]{abs}.

The isomorphism 
\begin{align}
    \mathbf{K}^{p,q}_G\cong \mathbf{K}^{C^{p,q}_\mathbb{C}[G]} \end{align}
between the K-theory $\mathbf{K}^{p,q}_G$ defined via the ABS homomorphism and Karoubi's K-theory $\mathbf{K}^{C^{p,q}_\mathbb{C}[G]}$ of the Banach functor functor 
\begin{align}
  \varphi^{p,q} : \mathcal{E}^{C^{p,q}_\mathbb{C}[G]} \longrightarrow \overline{\mathcal{E}}^{C^{p,q}_\mathbb{C}[G]} 
\end{align}
follows from the fact that a degree one isomorphism in $\overline{\mathcal{E}}^{C^{p,q}_\mathbb{C}[G]}$
can be lifted to a degree zero isomorphism
in $\mathcal{E}^{C^{p,q+1}_\mathbb{C}[G]}$ and vice versa.

The K-theory groups $\mathbf{K}^{p,q}_G$ for the magnetic group $(G, \phi)$ were calculated in \cite[Thm 2.5]{serrano2025magneticequivariantktheory} and are as follows:
\begin{align} \label{Coefficients K_G}
\mathbf{K}^{p,q}_G \cong (\mathrm{KO}^{p,q})^{\oplus n_\mathbb{R}} \oplus 
(\mathrm{KSp}^{p,q})^{\oplus n_\mathbb{H}} \oplus 
(\mathrm{KU}^{p,q})^{\oplus n_\mathbb{C}},
\end{align}
where $n_\mathbb{F}$ denotes the number
of irreducible magnetic representations of the group $G$ of type $\mathbb{F} \in \{\mathbb{R}, \mathbb{H}, \mathbb{C} \}$, and $\mathrm{KO}$, $\mathrm{KSp}$ and $\mathrm{KU}$ denote the K-theory of real vector bundles, of quaternionic vector bundles and of complex vector bundles, respectively.

If $\widetilde{G}$ is a magnetic $\mathbb{C}^*$ extension 
of $G$ as in Eqn. \eqref{extension of groups tilde G}, the $\widetilde{G}$-twisted magnetic $G$-equivariant K-theory groups ${}^{\widetilde{G}}\mathbf{K}^{p,q}_G$
defined in \cite[Def. 2.11]{serrano2025magneticequivariantktheory}, are precisely the K-theory groups of the twisted algebra 
$(C^{p,q}_\mathbb{C})_\rho [G]$
where $\rho$ is a 2-cocycle of the extension
as in Eqn. \eqref{2-cocyclie rho of X} (up to a canonical isomorphism). So, in this case we have:
\begin{align}
    {}^{\widetilde{G}}\mathbf{K}^{p,q}_G \cong \mathbf{K}^{(C^{p,q}_\mathbb{C})_\rho [G]},
\end{align}
where the left hand side is from 
\cite[Def. 2.11]{serrano2025magneticequivariantktheory} and the right hand side is the K-theory of the Banach functor of the twisted algebra $(C^{p,q}_\mathbb{C})_\rho [G]$ as in Eqn. \eqref{product twisted algebra}.

The twisted magnetic equivariant K-theory group $ {}^{\widetilde{G}}\mathbf{K}^{p,q}_G$ is a subgroup of the magnetic equivariant K-theory $ \mathbf{K}^{p,q}_{\widetilde{G}}$, and we can use the isomorphism of  Eqn. \eqref{Coefficients K_G} to determine them, namely:
\begin{align} \label{Coefficients twisted K_G}
{}^{\widetilde{G}}\mathbf{K}^{p,q}_G \cong (\mathrm{KO}^{p,q})^{\oplus \widetilde{n}_\mathbb{R}} \oplus 
(\mathrm{KSp}^{p,q})^{\oplus \widetilde{n}_\mathbb{H}} \oplus 
(\mathrm{KU}^{p,q})^{\oplus \widetilde{n}_\mathbb{C}},
\end{align}
where $\widetilde{n}_\mathbb{F}$ denotes the  number
of $\widetilde{G}$-twisted irreducible representations of $G$ of type $\mathbb{F} \in \{\mathbb{R}, \mathbb{H}, \mathbb{C} \}$.

\subsection{Degree shift isomorphism}
Now we can prove the degree shift isomorphism for the total K-theory $\mathbf{K}^*_{(G,\phi)}$ generalizing the 
one presented in \cite[Thm. 2.6]{serrano2025magneticequivariantktheory}. For this, we need some extra notation. 

Let $\sigma : \mathbb{Z}/2 \times \mathbb{Z}/2 \to \mathbb{C}^*$ be the 2-cocycle: 
\begin{align}
    \sigma(a,b) = e^{\pi i ab}
\end{align}
which characterizes the central extension of magnetic groups:
\begin{align}
    \mathbb{C}^* \to \mathbb{C}^*\underset{\langle (-1,2) \rangle}{\times} \mathbb{Z}/4 \to \mathbb{Z}/2, 
\end{align}
built from the central extension of magnetic groups:
\begin{align}
    \mathbb{Z}/2 \to \mathbb{Z}/4 \to \mathbb{Z}/2.
\end{align}
Here $\langle (-1,2) \rangle$ denotes the subgroup of $\mathbb{C}^* \times \mathbb{Z}/4 $ generated by $(-1,2)$.
Denote the pullback group $\widehat{G} : = \phi^* \mathbb{Z}/4$ and note that the pullback cocycle $\phi^* \sigma$ characterizes the central extension:
\begin{align}
    \mathbb{Z}/2 \to \widehat{G} \to G.
\end{align}

Now we are ready to prove the degree shift isomorphism (cf. \cite[Thm. 2.6]{serrano2025magneticequivariantktheory}):

\begin{proposition} [Degree shift isomorphism] \label{degree shift}
    Consider the $\mathbb{Z}/2$ action on $M_2(\mathbb{C})$ given by $\mathrm{Ad}_{\left(\begin{smallmatrix}
            0 & -1 \\ 1 & 0
        \end{smallmatrix}\right)\mathbb{K}}$ as in Eqn. \eqref{M2(C) generator Br(R)}. Then there is an isomorphism of algebras:
        \begin{align}
            M_2(\mathbb{C}) \rtimes \mathbb{Z}/2 \cong M_2(\mathbb{C})_{\sigma}[\mathbb{Z}/2].
        \end{align}
        If $G$ acts on $M_2(\mathbb{C})$ 
        by the pullback action of $\phi$, then:
        \begin{align}
            M_2(\mathbb{C}) \rtimes G \cong M_2(\mathbb{C})_{\phi^* \sigma}[G].
        \end{align}
        In particular, we have an induced isomorphism of K-theories:
        \begin{align}
        \mathbf{K}^{p,q+4}_{\mathbb{Z}/2} \cong {}^{\mathbb{Z}/4}\mathbf{K}^{p,q}_{\mathbb{Z}/2}, \ \ \ \ \ \ 
            \mathbf{K}^{p,q+4}_G \cong {}^{\widehat{G}}\mathbf{K}^{p,q}_G,  \ \ \ \
            \mathbf{K}^{A^{p,q+4}}_G \cong \mathbf{K}^{{A^{p,q}}_{\phi^*\sigma}[G]}, \label{degree shift isomorphism}
        \end{align}
        for $A$ any MECSGA$_{(G,\phi)}(\mathbb{C})$. 
\end{proposition}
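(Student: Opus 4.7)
The plan is to derive the algebra isomorphisms first via Proposition~\ref{prop semidirect action 0 twisted}, and then use them together with the similarity invariance of the Banach K-theory of a MECSGA (Eqn.~\eqref{iso ktheories similar algebras}) and the Lemma converting equivariant K-theory into K-theory of a semi-direct product.

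\textbf{Step 1 (algebra isomorphism over $\mathbb{Z}/2$).} Set $J := \left(\begin{smallmatrix}0 & -1 \\ 1 & 0\end{smallmatrix}\right)$ and take $A = M_2(\mathbb{C})$ with the $\mathbb{Z}/2$-action $\mathrm{Ad}_{J\mathbb{K}}$. Since $M_2(\mathbb{C})$ is concentrated in degree $0$ (cf.\ Eqn.~\eqref{M2(C) generator Br(R)}), the assignment $X(0) = I$, $X(1) = J$ consists of elements of degree $0$. Because $J$ is real, $\overline{X(1)}^{\phi(1)} = J$, so Eqn.~\eqref{eqn X(g)} gives $X(1)\overline{X(1)}^{\phi(1)} X(0)^{-1} = J^2 = -I$, that is $\rho(1,1) = -1 = e^{\pi i}$, while $\rho(i,j) = 1$ otherwise. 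Thus $\rho = \sigma$, and Proposition~\ref{prop semidirect action 0 twisted} yields $M_2(\mathbb{C}) \rtimes \mathbb{Z}/2 \cong M_2(\mathbb{C})_\sigma[\mathbb{Z}/2]$.

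\textbf{Step 2 (pullback by $\phi$).} For the magnetic group $(G,\phi)$ with the pullback action $\mathrm{Ad}_{J^{\phi(g)}\mathbb{K}^{\phi(g)}}$, I set $X(g) := J^{\phi(g)}$. Reality of $J$ gives $X(g)\overline{X(h)}^{\phi(g)} = J^{\phi(g)+\phi(h)}$, which equals $X(gh) = J^{\phi(gh)}$ up to a factor of $J^2 = -I$ appearing exactly when $\phi(g) = \phi(h) = 1$. Hence $\rho(g,h) = \sigma(\phi(g),\phi(h)) = (\phi^*\sigma)(g,h)$, and Proposition~\ref{prop semidirect action 0 twisted} gives $M_2(\mathbb{C}) \rtimes G \cong M_2(\mathbb{C})_{\phi^*\sigma}[G]$.

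\textbf{Step 3 (K-theory).} I factor $A^{p,q+4} = A^{p,q} \hat{\otimes} C^{0,4}_\mathbb{C}$ and use the decomposition $C^{0,4}_\mathbb{C} \cong M_2(\mathbb{C}) \hat{\otimes} M_{1|1}(\mathbb{C})$ from Eqn.~\eqref{C04 clifford}. The second factor $M_{1|1}(\mathbb{C})$ is the endomorphism algebra of the magnetic $(G,\phi)$-representation $V = \mathbb{C} \oplus \mathbb{C}$ with action $\mathrm{Ad}_{\mathbb{K}^{\phi(g)}}$, and so by \eqref{iso ktheories similar algebras} tensoring with it does not change the Banach K-theory. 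This gives $\mathbf{K}^{A^{p,q+4}}_G \cong \mathbf{K}^{A^{p,q} \hat{\otimes} M_2(\mathbb{C})}_G$. Passing to the semi-direct product via the Lemma and then running the argument of Proposition~\ref{prop semidirect action 0 twisted} on the inner part, I construct the explicit algebra isomorphism
\begin{equation*}
(A^{p,q} \hat{\otimes} M_2(\mathbb{C})) \rtimes G \;\overset{\cong}{\to}\; A^{p,q}_{\phi^*\sigma}[G] \hat{\otimes} M_2(\mathbb{C}),\qquad (a \hat{\otimes} m)\cdot g \longmapsto \lfloor a, g\rfloor \hat{\otimes} X(g)^{-1} m.
\end{equation*}
Since $M_2(\mathbb{C})$ sits in pure degree $0$, tensoring by it is a Morita equivalence of the relevant Banach categories, so the resulting K-theory is unchanged, yielding $\mathbf{K}^{A^{p,q+4}}_G \cong \mathbf{K}^{A^{p,q}_{\phi^*\sigma}[G]}$. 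The first two K-theory isomorphisms in Eqn.~\eqref{degree shift isomorphism} follow as the specializations to $A = \mathbb{C}$ with $G = \mathbb{Z}/2$ and with general $G$, respectively.

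\textbf{Main obstacle.} The delicate point is verifying that the map in Step~3 really is an algebra homomorphism. This requires careful tracking of the Koszul signs from $\hat{\otimes}$, together with the interaction between the $G$-action on $A^{p,q}$ and the twisted product \eqref{product twisted algebra} for a general MECSGA. That $X(g) = J^{\phi(g)}$ is homogeneous of degree $0$ is precisely what suppresses extra sign contributions and makes the identification $\rho = \phi^*\sigma$ from Step~2 consistent with the graded twisted product formula.
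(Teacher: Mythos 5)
Your Steps 1 and 2 follow the paper faithfully: both compute the cocycle $\rho = \phi^*\sigma$ from $X(g) = J^{\phi(g)}$ and invoke Proposition~\ref{prop semidirect action 0 twisted} to obtain the algebra isomorphisms. Step 3 also begins exactly as the paper's proof does, using Eqn.~\eqref{C04 clifford} to factor $C^{0,4}_\mathbb{C} \cong M_2(\mathbb{C}) \hat{\otimes} M_{1|1}(\mathbb{C})$ and the similarity invariance of $\mathbf{K}^{-}_{(G,\phi)}$ to strip off the $M_{1|1}(\mathbb{C})$ factor.

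However, the explicit algebra isomorphism you propose in Step~3, namely $(a \hat{\otimes} m, g) \mapsto \lfloor a, g\rfloor \hat{\otimes} X(g)^{-1} m$ into $A^{p,q}_{\phi^*\sigma}[G] \hat{\otimes} M_2(\mathbb{C})$, is not an algebra homomorphism. Take $\phi(g)=0$, $\phi(h)=1$ so that $X(g)=I$, $X(h)=X(gh)=J$ and $(\phi^*\sigma)(g,h)=1$. The image of the product $(a\hat{\otimes}m,g)(b\hat{\otimes}n,h)$ has $M_2(\mathbb{C})$-component $X(gh)^{-1}mX(g)\overline{n}^{\phi(g)}X(g)^{-1} = -Jmn$, while the product of the images has $M_2(\mathbb{C})$-component $X(g)^{-1}m\,X(h)^{-1}n = -mJn$ (up to whatever conjugation convention is used crossing $\lfloor a,g\rfloor$, which here is trivial since $\phi(g)=0$). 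These agree only when $J$ commutes with $m$, which fails for generic $m \in M_2(\mathbb{C})$. The structural problem is that you are trying to push the $M_2(\mathbb{C})$ factor outside the twisted group algebra while keeping $X(g)^{-1}$ on the left; the correction that $X(g)$ effects is to the $G$-action \emph{inside} the algebra, so it must stay there. A map that does work is $\Psi(a\hat{\otimes}m,g)=\lfloor a\hat{\otimes}mX(g),\,g\rfloor$ into $\left(A^{p,q}\hat{\otimes}M_2(\mathbb{C})\right)_{\phi^*\sigma}[G]$, where in the target $M_2(\mathbb{C})$ carries only the complex-conjugation $G$-action: then the cocycle relation $X(g)^{-1}X(gh)=(\phi^*\sigma)(g,h)^{-1}\overline{X(h)}^{\phi(g)}$ makes the two sides of the homomorphism equation agree, and the now-trivially-acted-on $M_2(\mathbb{C})$ factor can then be removed by the Morita equivalence you cite. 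The paper avoids this pitfall entirely by never writing an explicit formula: it asserts the Morita equivalence $C^{0,4}_\mathbb{C}\rtimes G \sim M_2(\mathbb{C})\rtimes G \cong M_2(\mathbb{C})_{\phi^*\sigma}[G]\sim\mathbb{C}_{\phi^*\sigma}[G]$ and then tensors with $A^{p,q}$. You flagged the verification of your map as the "delicate point," and that concern is well placed: as written the map fails, and the argument has a gap until it is replaced by the corrected formula or by the Morita-equivalence chain used in the paper.
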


\begin{proof}
    The isomorphisms of algebras are a simple application of Prop. \ref{prop semidirect action 0 twisted} since the cocycle induced
    by the matrix $\left(\begin{smallmatrix}
            0 & -1 \\ 1 & 0
        \end{smallmatrix}\right)$ is precisely $\sigma$.

The magnetic $\mathbb{Z}/2$-equivariant central and simple algebra $C^{0,4}_\mathbb{C}$ with automorphism given by complex conjugation $\mathbb{K}$ is similar to the complexification of the quaternion algebra. Following the notation presented in Eqns. \eqref{C04 clifford}
and \eqref{M2(C) generator Br(R)} we have the similarity of algebras:
\begin{align}
    (C^{0,4}_\mathbb{C}, \mathbb{K}) \sim \left(M_2(\mathbb{C}), \mathrm{Ad}_{\left(\begin{smallmatrix}
            0 & -1 \\ 1 & 0
        \end{smallmatrix}\right)\mathbb{K}} \right).
\end{align}
Since the algebras $M_2(\mathbb{C})_{\sigma}[\mathbb{Z}/2]$ and $\mathbb{C}_{\sigma}[\mathbb{Z}/2]$
are Morita equivalent, we have the following  Morita equivalence of algebras:
\begin{align}
    C^{0,4}_\mathbb{C} \rtimes G \sim M_2(\mathbb{C}) \rtimes G \cong M_2(\mathbb{C})_{\phi^*\sigma}[G] \sim  \mathbb{C}_{\phi^*\sigma}[G],
\end{align}
which imply the Morita equivalences:
\begin{align}
    C^{p,q+4}_\mathbb{C} \rtimes G \sim 
    (C^{p,q}_{\mathbb{C}})_{\phi^*\sigma}[G], \ \ \ \
     A^{p,q+4} \rtimes G \sim 
    {A^{p,q}}_{\phi^*\sigma}[G]
\end{align}
The isomorphism of K-theories follow.
\end{proof}

The K-theory groups $\mathbf{K}^{p,q}_{\mathbb{Z}/2}$ and ${}^{\mathbb{Z}/4}\mathbf{K}^{p,q}_{\mathbb{Z}/2}$ of trivial $\mathbb{Z}/2$-spaces are isomorphic respectively to the K-theory  $\mathrm{KO}^{p,q}$ of real vector bundles and to the symplectic K-theory $\mathrm{KSp}^*$. 
The isomorphism of Eqn. \eqref{degree shift isomorphism} realizes the well known isomorphism $\mathrm{KO}^{p,q+4} \cong \mathrm{KSp}^{p,q}$ \cite{dupontsymplectic}.

\begin{corollary}[4-periodicity] \label{corollary 4-periodicity}
Let $(G,\phi)$ be a magnetic group such that the cohomology class of $\phi^* \sigma$ is trivial, ie. $[\phi^*\sigma]=1$. Then the magnetic equivariant K-theory $\mathbf{K}_G^*$ is 4-periodic.
\end{corollary}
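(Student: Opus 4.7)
The plan is to combine the degree shift isomorphism of Prop.~\ref{degree shift} with a trivialization argument for the resulting twisting cocycle. Applying the third isomorphism in Eqn.~\eqref{degree shift isomorphism} with $A = \mathbb{C}$, one obtains
\begin{align*}
\mathbf{K}^{p,q+4}_G \;\cong\; \mathbf{K}^{(C^{p,q}_\mathbb{C})_{\phi^*\sigma}[G]},
\end{align*}
so the task reduces to identifying the twisted algebra on the right with the untwisted crossed product $C^{p,q}_\mathbb{C} \rtimes G$, whose K-theory is precisely $\mathbf{K}^{p,q}_G$ by Prop.~\ref{prop semidirect action 0 twisted}.

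The hypothesis $[\phi^*\sigma] = 1$, read in $H^2(G,\mathbb{C}^*_\phi)$ via the inclusion $\mathbb{Z}/2 \hookrightarrow \mathbb{C}^*$, means there exists a $1$-cochain $\alpha : G \to \mathbb{C}^*$ with $\phi^*\sigma(g,h) = \alpha(g)\,\overline{\alpha(h)}^{\phi(g)}\,\alpha(gh)^{-1}$. I would then propose the rescaling map
\begin{align*}
\Phi : (C^{p,q}_\mathbb{C})_{\phi^*\sigma}[G] \longrightarrow C^{p,q}_\mathbb{C} \rtimes G, \qquad \lfloor a, g \rfloor \longmapsto \lfloor \alpha(g)^{-1} a, g \rfloor,
\end{align*}
and verify that it intertwines the twisted product of Eqn.~\eqref{product twisted algebra} with the untwisted one. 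This verification is a direct computation: on both sides one expands the product of two basis elements and uses the coboundary identity for $\alpha$ to cancel the factor $\phi^*\sigma(g,h)$ against the scalars $\alpha(g)^{-1}\overline{\alpha(h)^{-1}}^{\phi(g)}\alpha(gh)$ that appear after rescaling.

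Composing the algebra isomorphism $\Phi$ with the degree shift from Prop.~\ref{degree shift} yields the chain of isomorphisms
\begin{align*}
\mathbf{K}^{p,q+4}_G \;\cong\; \mathbf{K}^{(C^{p,q}_\mathbb{C})_{\phi^*\sigma}[G]} \;\cong\; \mathbf{K}^{C^{p,q}_\mathbb{C} \rtimes G} \;\cong\; \mathbf{K}^{p,q}_G,
\end{align*}
which is the claimed $4$-periodicity of $\mathbf{K}_G^*$. The only delicate point—and the step where I expect to have to be careful—is the bookkeeping of the $\phi$-twisted conjugation $\overline{(-)}^{\phi(g)}$ inside the product formula of Eqn.~\eqref{product twisted algebra}, since the coboundary relation for $\alpha$ lives in $H^2(G,\mathbb{C}^*_\phi)$ rather than in ordinary cohomology; once the signs and conjugations are matched, no further obstacle arises.
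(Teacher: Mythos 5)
Your proposal is correct and follows essentially the same route as the paper: both invoke the degree shift isomorphism of Prop.~\ref{degree shift} and then use the hypothesis $[\phi^*\sigma]=1$ to untwist $(C^{p,q}_\mathbb{C})_{\phi^*\sigma}[G]$ via a rescaling by a trivializing $1$-cochain. The only cosmetic difference is that the paper performs the rescaling on $\mathbb{C}_{\phi^*\sigma}[G]$ and then propagates the resulting identification through Morita equivalences, whereas you rescale the full $(C^{p,q}_\mathbb{C})_{\phi^*\sigma}[G]$ directly; the underlying idea and computation are the same.
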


\begin{proof}
    If $\phi^*\sigma = \delta f$ for $f : G \to \mathbb{C}^*$, then $(\phi^*\sigma)(g,h) = f(g)\overline{f(h)}^{\phi(g)}f(gh) ^{-1}$ and the
    assignment:
    \begin{align}
        \mathbb{C}[G] &\to \mathbb{C}_{\phi^*\sigma}[G]\\
        g &\mapsto f(g)g
    \end{align}
    is an isomorphism of algebras. Therefore, the algebras
    $C^{0,4}_\mathbb{C} \rtimes G$ and $\mathbb{C}[G]$
    are Morita equivalent and therefore by Prop. \ref{degree shift} we obtain the desired 4-periodicity:
    \begin{align}
          \mathbf{K}^{A^{p,q+4}}_G \cong \mathbf{K}^{{A^{p,q}}}_G.
    \end{align}
\end{proof}

The 4-periodicity had already appeared in the calculation
of the magnetic equivariant graded Brauer group of the cyclic groups $\mathbb{Z}/2n$ for $n >1$ in Eqn. \eqref{GrBr(Z2n)}. The pullback group satisfies $\widehat{\mathbb{Z}/2n} = \mathrm{mod}_2^* (\mathbb{Z}/4 )\cong \mathbb{Z}/2n \times \mathbb{Z}/2$ and therefore $[\mathrm{mod}_2^* \sigma]=1$. Hence we have the isomorphism of groups $\mathrm{GrBr}_{(\mathbb{Z}/2n, \mathrm{mod}_2)}(\mathbb{C})\cong \mathbb{Z}/2 \times \mathbb{Z}/4$.

For the particular case of the magnetic group $(\mathbb{Z}/4, \mathrm{mod}_2)$ the total K-theory groups are:
\begin{align} \label{iso total K-theory Z4}
    \mathbf{K}^{\mathrm{tot}}_{\mathbb{Z}/4} \cong \mathbf{K}_{\mathbb{Z}/4}^* \oplus {} ^{\mathbb{Z}/8}\mathbf{K}_{\mathbb{Z}/4}^*
\end{align}
where for $0 \leq p+q < 4$ we have: 
\begin{align} \label{KZ/4 in k-theory spectra}
    \mathbf{K}_{\mathbb{Z}/4}^{p,q}\cong \mathrm{KO}^{p,q} \oplus \mathrm{KSp}^{p,q}, \ \ \ \ \  {} ^{\mathbb{Z}/8}\mathbf{K}_{\mathbb{Z}/4}^{p,q} \cong  \mathrm{KU}^{p,q}.
\end{align}
The non-trivial class in $H^2(\mathbb{Z}/4, \mathbb{C}^*_\phi) \cong \mathbb{Z}/2$ defines the 
twisted K-theory ${} ^{\mathbb{Z}/8}\mathbf{K}_{\mathbb{Z}/4}^*$, and the Clifford algebras together with the 4-periodicity induce the isomorphism of Eqn. \eqref{iso total K-theory Z4}. The isomorphism in terms of the known K-theory spectra of Eqn. \eqref{KZ/4 in k-theory spectra} follows from the calculation of the coefficients of magnetic equivariant K-theory done by the authors and Xicot\'encatl in \cite[Thm. 2.63]{serrano2025magneticequivariantktheory}.









\section*{Acknowledgments}
HS is supported by the Simons Foundation, grant SFI-MPS-T-Institutes-00007697, and the Ministry of Education and Science of the Republic of Bulgaria, through grant number DO1-239/10.12.2024. 
BU acknowledges the continuous financial support of the Max Planck Institute of Mathematics in Bonn, Germany, the International Center for Theoretical Physics in Trieste, Italy, through its Associates Program, and the Alexander Von Humboldt Foundation in Bonn, Germany.







\bibliographystyle{alpha}
\bibliography{ref}


   \end{document}